\newtheorem{definition}{Definition}[section]
\newtheorem{theorem}{Theorem}[section]
\newtheorem{lemma}{Lemma}[section]
\newtheorem{corollary}{Corollary}[section]
\newtheorem{remark}{Remark}[section]
\definecolor{codegreen}{rgb}{0,0.6,0}
\definecolor{codegray}{rgb}{0.5,0.5,0.5}
\definecolor{codepurple}{rgb}{0.58,0,0.82}
\definecolor{backcolour}{rgb}{0.95,0.95,0.92}
\definecolor{ForestGreen}{RGB}{34,139,34}
\newcommand{\ubar}[1]{\stackunder[1.0pt]{$#1$}{\rule{.8ex}{.075ex}}} 
\NewDocumentCommand{\codeword}{v}{%
	\texttt{\textcolor{blue}{#1}}%
}
\lstdefinestyle{mystyle}{
	backgroundcolor=\color{backcolour},   
	commentstyle=\color{codegreen},
	keywordstyle=\color{magenta},
	numberstyle=\tiny\color{codegray},
	stringstyle=\color{codepurple},
	basicstyle=\ttfamily\footnotesize,
	breakatwhitespace=false,         
	breaklines=true,                 
	captionpos=b,                    
	keepspaces=true,                 
	numbers=left,                    
	numbersep=5pt,                  
	showspaces=false,                
	showstringspaces=false,
	showtabs=false,                  
	tabsize=2
}
\definecolor{highlight}{HTML}{DFEFF9}
\definecolor{algm}{HTML}{D4EFDF} 
\definecolor{soft}{HTML}{F2D7D5} 
\def\namedlabel#1#2{\begingroup
	#2%
	\def\@currentlabel{#2}%
	\phantomsection\label{#1}\endgroup
}
\newcommand{\Reals}{\mathbb{R}} 
\newcommand{\R}{\Reals} 
\newcommand{\Sym}{\mathbb{S}} 
\DeclareMathOperator*{\argmin}{arg\,min}
\newcommand{\B}[2]{\mathcal{B}(#1 ; #2 )} 
\newcommand{\E}[2][]{\operatorname{\mathbb{E}}_{#1}\left[ #2\right]} 
\newcommand{\Ab}{\bm{A}}
\newcommand{\xb}{\bm{x}}
\newtheorem{assumption}{Assumption}
\pgfplotsset{compat=newest}
\newcommand{\Z}{\mathbb{Z}}
\newcommand{\Esp}{\mathbb{E}}
\newcommand{\N}{\mathbb{N}}
\newcommand{\V}{\mathbb{V}}
\newcommand{\F}{\mathcal{F}}
\newcommand{\n}{\mathcal{N}}
\newcommand{\pr}{\mathbb{P}}
\newcommand{\ds}{\displaystyle}
\newcommand{\dmaxx}{\delta_{\max}}
\newcommand{\ijk}{\mathds{1}_{J_k^Q}}
\newcommand{\ijkc}{\mathds{1}_{\bar{J}_k^Q}}
\newcommand{\iik}{\mathds{1}_{I_k^Q}}
\newcommand{\iikc}{\mathds{1}_{\bar{I}_k^Q}}
\newcommand{\abs}[1]{\left\lvert#1\right\rvert}
\newcommand{\intbracket}[1]{\left[\!\left[#1\right]\!\right]}
\newcommand{\accolade}[1]{\left\lbrace#1\right\rbrace}
\newcommand{\accoladekinN}[1]{{\left\lbrace#1\right\rbrace}_{k\in\mathbb{N}}}
\newcommand{\prob}[1]{\mathbb{P}\left(#1\right)}
\newcommand{\var}[1]{\mathbb{V}\left(#1\right)}
\newcommand{\norme}[1]{\left\lVert#1\right\rVert}
\newcommand{\normii}[1]{{\left\lVert#1\right\rVert}_{2}}
\newcommand{\comb}[2]{\begin{bmatrix}
{#1}\\
{#2}
\end{bmatrix}}
\newcommand{\Sk}{\bm{{\stackunder[1.0pt]{$s$}{\rule{.8ex}{.075ex}}}_k}}
\newcommand{\sk}{\bm{s_k}}
\newcommand{\rn}{\mathbb{R}^n}
\newcommand{\rp}{\mathbb{R}^p}
\newcommand{\rnn}{\mathbb{R}^{n\times n}}
\newcommand{\rnp}{\mathbb{R}^{n\times p}}
\newcommand{\rpn}{\mathbb{R}^{p\times n}}
\newcommand{\rpp}{\mathbb{R}^{p\times p}}
\newcommand{\sRandom}{\bm{{\stackunder[1.0pt]{$s$}{\rule{.8ex}{.075ex}}}}}
\newcommand{\s}{\bm{s}}
\newcommand{\viRandom}{\bm{{\stackunder[1.0pt]{$v$}{\rule{0.8ex}{.075ex}}}}}
\newcommand{\vi}{\bm{v}}
\newcommand{\SMatrix}{\bm{{\stackunder[1.0pt]{$S$}{\rule{1.3ex}{.075ex}}}}}
\newcommand{\RMatrix}{\bm{{\stackunder[1.0pt]{$R$}{\rule{1.3ex}{.075ex}}}}}
\newcommand{\RrandomVar}{{\stackunder[1.0pt]{$r$}{\rule{0.8ex}{.075ex}}}}
\newcommand{\SrandomVar}{{\stackunder[1.0pt]{$s$}{\rule{0.8ex}{.075ex}}}}
\newcommand{\etarandomVar}{{\stackunder[1.0pt]{$\eta$}{\rule{0.8ex}{.075ex}}}}
\newcommand{\sigmarandomVar}{{\stackunder[1.0pt]{$\sigma$}{\rule{0.8ex}{.075ex}}}}
\newcommand{\Qk}{\bm{Q_k}} 
\newcommand{\QkRandom}{\bm{{\stackunder[0.6pt]{$Q$}{\rule{1.3ex}{.075ex}}}_k}}
\newcommand{\QzeroRandom}{\bm{{\stackunder[0.6pt]{$Q$}{\rule{1.3ex}{.075ex}}}_0}}
\newcommand{\QOneRandom}{\bm{{\stackunder[0.6pt]{$Q$}{\rule{1.3ex}{.075ex}}}_1}}
\newcommand{\QkmunRandom}{\bm{{\stackunder[0.6pt]{$Q$}{\rule{1.3ex}{.075ex}}}_{k-1}}}
\newcommand{\QellRandom}{\bm{{\stackunder[0.6pt]{$Q$}{\rule{1.3ex}{.075ex}}}_{\ell}}}
\newcommand{\QkTranspose}{\bm{Q_k}^{\top}} 
\newcommand{\QkRandomTranspose}{\bm{{\stackunder[0.6pt]{$Q$}{\rule{1.3ex}{.075ex}}}_k}^{\top}}
\newcommand{\Mhatk}{\hat{{{\stackunder[1.1pt]{$m$}{\rule{1.1ex}{.09ex}}}}}_k} 
\newcommand{\mhatk}{\hat{m}_k} 
\newcommand{\Mhatell}{\hat{{{\stackunder[1.1pt]{$m$}{\rule{1.1ex}{.09ex}}}}}_{\ell}}
\newcommand{\fok}{f_k^0}
\newcommand{\Fok}{{\stackunder[0.6pt]{$f$}{\rule{.8ex}{.075ex}}}_k^0}
\newcommand{\Foell}{{\stackunder[0.6pt]{$f$}{\rule{.8ex}{.075ex}}}_{\ell}^0}
\newcommand{\fsk}{f_k^s}
\newcommand{\Fsk}{{\stackunder[0.6pt]{$f$}{\rule{.8ex}{.075ex}}}_k^s}
\newcommand{\Fsell}{{\stackunder[0.6pt]{$f$}{\rule{.8ex}{.075ex}}}_{\ell}^s}
\newcommand{\dkun}{{\delta_{k+1}}}
\newcommand{\iak}{\mathds{1}_{\mathcal{A}_k}}
\newcommand{\Q}{\bm{Q}} 
\newcommand{\Qmax}{Q_{\max}} 
\newcommand{\hopt}{h_{\rm opt}} 
\newcommand{\szero}{\bm{s_0}} 
\newcommand{\sone}{\bm{s_1}} 
\newcommand{\stwo}{\bm{s_2}} 
\newcommand{\spi}{\bm{s_p}}
\newcommand{\sii}{\bm{s_i}}
\newcommand{\SSk}{\mathbb{S}^k}
\newcommand{\Lkhat}{\bm{\hat{L}_k}}
\newcommand{\Xk}{\bm{{\stackunder[1.0pt]{$x$}{\rule{1.0ex}{.075ex}}}_k}}
\newcommand{\X}{{\stackunder[1.0pt]{$x$}{\rule{1.0ex}{.075ex}}}}
\newcommand{\xk}{\bm{{x}_k}}
\newcommand{\xkun}{\bm{{x}_{k+1}}}
\newcommand{\x}{\bm{x}}
\newcommand{\di}{\bm{d}}
\newcommand{\dik}{\bm{d_k}}
\newcommand{\kegPrimeSquare}{\kappa_{eg}^{\prime\, 2}}
\newcommand{\Lg}{L_g}
\newcommand{\Gkhat}{\bm{\hat{{{\stackunder[1.0pt]{$g$}{\rule{1.0ex}{.075ex}}}}}_k}}
\newcommand{\Gk}{{{\stackunder[1.0pt]{$g$}{\rule{1.0ex}{.075ex}}}}_k}
\newcommand{\gkhat}{\bm{\hat{g}_k}}
\newcommand{\fmqalgebra}{\mathcal{F}^{\hat{m}\cdot Q}_{k-1}}
\newcommand{\falgebra}{\mathcal{F}_{k-1}}
\newcommand{\fqalgebra}{\mathcal{F}_{k-1}^Q}
\newcommand{\ef}{\varepsilon_f}
\newcommand{\Dk}{{\stackunder[1.0pt]{$\delta$}{\rule{.8ex}{.075ex}}}_k}
\newcommand{\Dkun}{{\stackunder[1.0pt]{$\delta$}{\rule{.8ex}{.075ex}}}_{k+1}}
\newcommand{\Dkmun}{{\stackunder[1.0pt]{$\delta$}{\rule{.8ex}{.075ex}}}_{k-1}}
\newcommand{\dk}{\delta_k}
\newcommand{\Hkhat}{\bm{\hat{H}}_k}
\newcommand{\HkhatRandom}{\hat{{\stackunder[1.0pt]{$H$}{\rule{1.3ex}{.075ex}}}}_k}
\newcommand{\igkhat}{\mathds{1}_{G_k}}
\newcommand{\igkhatc}{\mathds{1}_{\bar{G}_k}}
\newcommand{\igk}{\mathds{1}_{\Gamma_k}}
\newcommand{\igkc}{\mathds{1}_{\bar{\Gamma}_k}}
\newcommand{\yOne}{{\stackunder[0.6pt]{$y$}{\rule{1.0ex}{.075ex}}}_1}
\newcommand{\yTwo}{{\stackunder[0.6pt]{$y$}{\rule{1.0ex}{.075ex}}}_2}
\newcommand{\ypi}{{\stackunder[0.6pt]{$y$}{\rule{1.0ex}{.075ex}}}_p}
\newcommand{\yii}{{\stackunder[0.6pt]{$y$}{\rule{1.0ex}{.075ex}}}_i}
\newcommand{\azeroRandomVar}{{\stackunder[1.0pt]{$a$}{\rule{0.8ex}{.075ex}}}_0}
\newcommand{\aRandomVec}{\bm{{\stackunder[1.0pt]{$a$}{\rule{0.8ex}{.075ex}}}}}
\newcommand{\Phik}{{\stackunder[1.0pt]{$\phi$}{\rule{0.8ex}{.075ex}}}_k}
\newcommand{\Phikun}{{\stackunder[1.0pt]{$\phi$}{\rule{0.8ex}{.075ex}}}_{k+1}}
\newcommand{\tauEpsilon}{{{\stackunder[1.0pt]{$\tau$}{\rule{0.6ex}{.075ex}}}_\epsilon}}
\newcommand{\Wtildek}{{{\stackunder[1.0pt]{$\tilde{w}$}{\rule{0.8ex}{.075ex}}}_k}}
\newcommand{\Wtildekun}{{{\stackunder[1.0pt]{$\tilde{w}$}{\rule{0.8ex}{.075ex}}}_{k+1}}}
\newcommand{\PhiZero}{{\stackunder[1.0pt]{$\phi$}{\rule{0.8ex}{.075ex}}}_0}
\newcommand{\Dzero}{{\stackunder[1.0pt]{$\delta$}{\rule{.8ex}{.075ex}}}_0}
\newcommand{\WtildeZero}{{{\stackunder[1.0pt]{$\tilde{w}$}{\rule{0.8ex}{.075ex}}}_0}}
\newcommand{\depsilon}{\delta_{\epsilon}}
\newcommand{\itksup}{\mathds{1}_{\{{{\stackunder[1.0pt]{$\tau$}{\rule{0.6ex}{.075ex}}}_\epsilon} > k\}}}
\newcommand{\kef}{\kappa_{ef}} 
\newcommand{\keg}{\kappa_{eg}} 
\newcommand{\Wtildekmun}{{{\stackunder[1.0pt]{$\tilde{w}$}{\rule{0.8ex}{.075ex}}}_{k-1}}}
\newcommand{\Phikmun}{{\stackunder[1.0pt]{$\phi$}{\rule{0.8ex}{.075ex}}}_{k-1}}
\title{Stochastic trust-region algorithm in random subspaces with convergence and expected complexity analyses\thanks{This work was supported in part by the U.S.~Department of Energy, Office of
Science, Office of Advanced Scientific Computing Research Applied Mathematics
Program under Contract No.~DE-AC02-06CH11357.}}
\author{
	\href{mailto:kdzahini@anl.gov}{Kwassi Joseph Dzahini}\thanks{Argonne National Laboratory, 9700 S. Cass Avenue, Lemont, IL 60439, USA (\href{https://www.anl.gov/profile/kwassi-joseph-dzahini}{ www.anl.gov/profile/kwassi-joseph-dzahini},
	 \href{http://www.mcs.anl.gov/\%7Ewild/}{ www.mcs.anl.gov/$_{\widetilde{~}}$wild/}).
	}
	\and
	\href{mailto:wild@anl.gov}{Stefan M. Wild}\footnotemark[2]
}
\begin{document}

\maketitle

\begin{abstract}
This work proposes a framework for large-scale stochastic derivative-free optimization (DFO) by introducing STARS, a trust-region method based on iterative minimization in random subspaces. This framework is both an algorithmic and theoretical extension of an algorithm for stochastic optimization with random models (STORM). 
Moreover, STARS achieves scalability by minimizing interpolation models that approximate the objective in low-dimensional
affine subspaces, thus significantly reducing per-iteration costs in terms of function evaluations and yielding strong performance on large-scale stochastic DFO problems. The user-determined dimension of these subspaces, when the latter are defined, for example, by the columns of so-called Johnson--Lindenstrauss transforms, turns out to be independent of the dimension of the problem. For convergence purposes, both a particular quality of the subspace and the accuracies of random function estimates and models are required to hold with sufficiently high, but fixed, probabilities. Using martingale theory under the latter assumptions, an almost sure global convergence of STARS to a first-order stationary point is shown, and the expected number of iterations required to reach a desired first-order accuracy is proved to be similar to that of STORM and other stochastic DFO algorithms, up to constants. 
\end{abstract}

\section{Introduction.}
Outstanding growth in the use of computers and sensors has 
attracted interest in myriad scientific and engineering fields 
for solving difficult optimization problems involving functions available only through a zeroth-order oracle (i.e., the functions are {\it black boxes}~\cite{AuHa2017}).
Derivative-free optimization (DFO~\cite{AuHa2017,CoScVibook,LMW2019AN}) addresses such situations where closed-form expressions and derivatives are not available.
This paper focuses on unconstrained stochastic DFO, wherein the objective function values are accessible only through a blackbox oracle corrupted by stochastic noise. We consider the problem 
\begin{equation}\label{probl1}
	\underset{\x\in\rn}{\min} f(\x), \qquad\mbox{with}\quad f(\x)=\Esp_{\ubar{\theta}}\left[f_{\ubar{\theta}}(\x)\right],
\end{equation}
where the values of the continuously differentiable function $f:\rn\to\R$ are available only via $f_{\ubar{\theta}}$, a stochastically noisy version of $f$, and where $\ubar{\theta}$ is a random variable whose distribution is possibly unknown. 
Stochastic gradient descent (SGD~\cite{RoMo1951}) is arguably the most used algorithm to solve~\eqref{probl1}. However, SGD is very sensitive to the choice of its learning rate; and not only does the method use step directions that are not necessarily descent ones~\cite{paquette2018stochastic}, but also it exhibits slow convergence~\cite{chen2018stochastic}. Similar remarks have motivated recent work in stochastic DFO, leading to many algorithms such as model-based methods~\cite{blanchet2016convergence,chen2018stochastic,shashaani2018astro}, those using a direct-search approach~\cite{anagnostidis2021direct,audet2019stomads,Ch2012,dzahini2020expected,dzahini2020constrained,rinaldi2022weak}, and  others~\cite{berahas2019global,RBSWlong21,miaoSchNeur2021,paquette2018stochastic} that employ stochastic estimation of the gradient $\nabla f(\x)$. Because of the unavailability of true function values and the resulting need to use stochastic estimates and/or models, the performance of these methods is highly dependent on how accurate those stochastic quantities are. For example,
STORM~\cite{blanchet2016convergence,chen2018stochastic} is a stochastic trust-region algorithm using so-called $\beta$-probabilistically $\ef$-accurate estimates of unknown function values as well as $\alpha$-probabilistically $\kappa$-fully linear models of the objective function. For convergence purposes, such estimates and models whose accuracies are dynamically controlled by means of a trust-region radius $\dk$ need to be sufficiently accurate probabilistically. More precisely, at a given iteration~$k$, the aforementioned estimates and models require, respectively, at least 
$\mathcal{O}\left(\dk^{-4}/(1-\sqrt{\beta})\right)$ 
and 
$\mathcal{O}\left((n+1)\max\accolade{\dk^{-2},\dk^{-4}}/(1-\alpha^{1/(n+1)})\right)$
function evaluations in a simple stochastic noise framework where no noisy gradient values are available; these quantities grow rapidly as $\alpha,\beta\in (0,1)$ or the dimension $n$ gets larger or when $\dk$ tends to zero. On the other hand, ASTRO-DF~\cite{shashaani2018astro} is a class of stochastic DFO trust-region algorithms where random estimates and polynomial interpolation models are constructed adaptively by using a Monte Carlo sampling whose extent is determined by continuously balancing and monitoring measures of sampling error and model bias. Recalling that the construction of {\it full space} quadratic interpolation models requires $q(n):=(n+1)(n+2)/2$ ($\ell(n):=n+1$ in the linear case) points~\cite{CoScVibook} and that the accuracy of Monte Carlo-based estimates crucially depends on the sampling size, as suggested by the strong law of large numbers~\cite[Theorem~2.1.8]{tao2012topicsInRMT}, ASTRO-DF also does not have low per-iteration costs in terms of function evaluations.

In light of these observations,
a key question naturally arises regarding model-based stochastic DFO methods: is there a way to improve the ability of these methods to handle large-scale problems. 
To answer a question similar to the one above in a context where the objective function is deterministic,  Cartis and  Roberts  \cite{CRsubspace2021} recently introduced RSDFO, a general framework of scalable subspace methods for model-based DFO. To achieve scalability, RSDFO approximates the deterministic objective only in subspaces using so-called $\Qk$-fully linear models, with~$\Qk\in \rnp$ denoting a matrix whose entries are randomly selected and where $p\leq n$ (ideally, $p\ll n$) is a user-determined parameter. The RSDFO framework was then specialized to deterministic nonlinear least-squares problems, and high-probability worst-case complexity bounds of the methods were derived under mild assumptions. As highlighted in~\cite{CRsubspace2021}, another model-based subspace DFO method with similarities to RSDFO but admitting no convergence analysis is the {\it moving ridge function} approach~\cite{GrossParks2021}, where an interpolation model is built in an {\it active space} that is determined by using existing objective function evaluations. Neumaier  et al.~\cite{NeFeSchLei2011} also proposed the VXQR method with no convergence analysis, where line searches are performed along directions selected in a subspace determined by previous iterates. More recently, a direct-search method based on probabilistic descent in {\it reduced spaces} was introduced in~\cite{RobRoy2022RedSpace} for the optimization of deterministic objective functions, where the polling directions are constructed in random subspaces; complexity bounds were also derived, making use of probabilistic properties related to both the latter directions and subspaces.

Of the cited methods, the only ones that achieve scalability using random subspace strategies are developed for deterministic objective functions; to our knowledge, no such method exists for stochastic DFO. We address this gap by introducing a stochastic trust-region algorithm in random subspaces (STARS), in which scalability arises from constructing and then minimizing stochastic models that approximate the objective function in low-dimensional random subspaces. By defining these random subspaces through user-provided random matrices $\QkRandom\in\rnp$ with $p\ll n$, only $q(p)$ and $\ell(p)$ points are required for the construction of quadratic and linear models, respectively, which are cheap models in terms of function evaluations since $p$ can always be chosen independently of $n$ in STARS.  While both the convergence and expected complexity analyses of STARS are inspired by those of STORM~\cite{blanchet2016convergence,chen2018stochastic}, dealing with the additional difficulties introduced by the randomness stemming from the entries of $\QkRandom$ is not a trivial task. As an example, no prior work exists showing how so-called $\beta_m$-{\it probabilistically} $(\kappa_{ef},\kappa_{eg};\Qk)$-{\it fully linear models} in random subspaces, crucial for the present analysis, can be made available. Moreover, the theoretical analyses of STORM utilizing random steps $\Sk\in\rn$ do not straightforwardly hold when the latter are replaced with  $\QkRandom\Sk\in\rn$ used by STARS. Our main results trivially imply those related to STORM in the full-space case where $\QkRandom=\bm{I_n}\in\rnn$ for all $k\in\N$, with $\bm{I_n}$
denoting the identity matrix. 

One of the key contributions of the present work is that it extends the analysis of a STORM-like framework~\cite{audet2019stomads,BerDioKunRoy2022LvnbrgMrqdt,blanchet2016convergence,chen2018stochastic,curtis2020adaptive,dzahini2020expected,dzahini2020constrained,paquette2018stochastic} to settings where additional randomness stems from internal mechanics (here, the entries of the random matrices~$\QkRandom$) of an algorithm for stochastic DFO. 
To our knowledge, STARS is the first DFO algorithm for stochastic objectives that achieves scalability using a random subspace strategy; STARS both extends STORM techniques to settings of scalable subspace methods and extends RSDFO techniques to stochastic objective functions. 
Another key contribution of this work is the results of Theorem~\ref{subspaceModel1} together with Corollary~\ref{CorQkFully}, which are, to the best of our knowledge, the first to rigorously provide a detailed strategy for the construction of $\beta_m$-probabilistically $(\kappa_{ef},\kappa_{eg};\Qk)$-fully linear models
for stochastic objective functions in random subspaces.
Moreover, relying on a framework introduced in~\cite{blanchet2016convergence} using results derived from martingale theory, the analysis of STARS demonstrates that while using random subspace models, the expected complexity of the algorithm is surprisingly similar to that of STORM~\cite{blanchet2016convergence}, stochastic direct-search~\cite{dzahini2020expected}, and line-search-based~\cite{paquette2018stochastic} methods up to constants.

The manuscript is organized as follows. Section~\ref{Section2} introduces the general framework of STARS and explains how it results in a stochastic process. Section~\ref{Section3} discusses various strategies for the selection of random subspaces, demonstrates how probabilistic estimates and models can be constructed in these subspaces, and provides conditions related to these random quantities that are necessary for the convergence of STARS. Sections~\ref{Section4} and~\ref{Section5},  respectively, present the convergence and expected complexity analyses of STARS. Section~\ref{Section6} presents numerical results, followed by a discussion and suggestions for future work.

\section{Random subspace trust-region and resulting stochastic process.}\label{Section2}
$ $
This section presents the general framework of STARS and explains how the proposed method results in a stochastic process.
\subsection{The random subspace stochastic trust-region method.}
Unlike the stochastic trust-region framework STORM~\cite{blanchet2016convergence,chen2018stochastic}, which builds full space random models $\ubar{m}_k(\x)$, $\x\in\rn$, to approximate $f(\x)$, STARS operates as follows. On iteration $k$, given a current iterate $\xk$, let $\mathcal{Y}_k\subseteq \rn$ be the affine space randomly chosen through the range of a matrix $\Qk\in\rnp$ 
whose entries are randomly selected; that is, 
\[\mathcal{Y}_k = \accolade{\xk+\Qk\bm{{\s}}:\bm{{\s}}\in\R^p}.\]
Then, given a trust-region radius $\dk>0$, a subspace model $\mhatk(\s), \s\in\rp$ is built only on $\mathcal{Y}_k$ using realizations of the stochastically noisy function $f_{\ubar{\theta}}$; the model  serves as an approximation of $f(\x)$ in $\mathcal{B}(\xk,\dk;\Qk):=\accolade{\xk+\Qk\s \in \mathcal{Y}_k:\normii{\s}\leq\dk}$. For concreteness, here we employ a quadratic subspace model given by 
\begin{equation*}
f(\xk+\Qk\s)\approx\mhatk(\bm{\s}):= f_k+\gkhat^\top\s+\s^\top\Hkhat\s,
\end{equation*}
where $\gkhat\in \R^p$ and $\Hkhat\in \Sym^{p}$ are the low-dimensional model gradient and Hessian, respectively. A {\it tentative step} $\Qk\sk\in\rn$ is produced by using the solution $\sk\in\rp$ obtained by approximately minimizing $\mhatk$ inside the trust  region; that  is, $\sk\approx{\argmin}\left\{ \mhatk(\s): \s\in\rp, \, \normii{\bm{{\s}}}\leq \dk\right\}$. Inspired by~\cite{CRsubspace2021,chen2018stochastic}, the {\it trial step} $\sk$ has to provide a sufficient decrease in $\mhatk$ by satisfying the following standard fraction of the Cauchy decrease condition.

\begin{assumption}
For every iteration $k$, a trial step $\sk\in\rp$ is computed so that\footnote{Throughout the manuscript, the matrix norm $\norme{\cdot}$ is supposed to be {\it consistent} with the Euclidean norm; that is, $\normii{\Ab\xb}\leq \norme{\Ab}\normii{\xb}$, which holds for both Frobenius and spectral matrix norms.}
\begin{equation}\label{CauchyDecrease}
\mhatk(\bm{0})-\mhatk(\bm{\sk})\geq \frac{\kappa_{fcd}}{2}\normii{\gkhat}\min\accolade{\dk,\frac{\normii{\gkhat}}{\max\left(\norme{\Hkhat},1\right)}},
\end{equation}
for some constant $\kappa_{fcd}\in (0,1]$.
\end{assumption}
Estimates $\fok$ and $\fsk$ of $f(\xk)$ and $f(\xk+\Qk\sk)$, respectively, are constructed by using evaluations of the noisy function $f_{\ubar{\theta}}$. The possible change in $f$ given by $\sk$ is measured by comparing the estimates $\fok$ and $\fsk$ through the value of a ratio $\rho_k$. An iteration is called successful if a decrease in the estimates is deemed sufficient, in which case the current solution is updated by $\xk+\Qk\sk$ and the trust-region radius is not decreased. Otherwise, the latter is decreased while the former is not updated. Because of inaccurate estimates, successful iterations qualified as {\it false} could lead to an increase in $f$. While the present algorithmic framework is not able to identify such iterations, as was the case in~\cite{audet2019stomads,chen2018stochastic,dzahini2020expected,dzahini2020constrained,paquette2018stochastic}, it will  be possible to prove later in the proof of Theorem~\ref{liminfTheorem} combined with~\eqref{randomWalk} that {\it true} iterations occur sufficiently often for convergence of the proposed method to hold.

\subsection{Stochastic process generated by the algorithm.}

The random variables considered here are all defined on the same probability space $(\Omega,\F, \pr)$, where $\Omega$, referred to as the {\it sample space}, is a nonempty set whose elements $\omega$ and subsets are called {\it sample points} and {\it events}, respectively. $\F$ is a collection of such events, which is called a {\it $\sigma$-algebra}. $\pr$ is a finite measure defined on the measurable space $(\Omega,\F)$, satisfying $\prob{\Omega}=1$ and referred to as the {\it probability measure}. An increasing subsequence $\accolade{\mathcal{S}_k}_k$ of $\sigma$-algebras of $\F$ will be called a {\it filtration}. Let $\mathscr{B}(\rn)$ be the $\sigma$-algebra generated by the open sets of $\rn$, also known as the Borel $\sigma$-algebra of $\rn$. A random variable $\ubar{z}$ is a measurable map defined on $(\Omega,\F, \pr)$ into the measurable space $(\rn, \mathscr{B}(\rn))$, where measurability means that $\accolade{\ubar{z}\in A}:=\accolade{\omega\in\Omega:\ubar{z}(\omega)\in A} =:{\ubar{z}}^{-1}(A)\in\F\ \forall A\in \mathscr{B}(\rn)$~\cite{bhattacharya2007basic}. For the remainder of the manuscript, vectors will be written in lowercase boldface (e.g., $\x\in\rn, n\geq 2$) while matrices will be written in uppercase boldface (e.g., $\Qk\in\rnp$), and underlined letters (e.g., $\ubar{z}, \ubar{\x}, \QkRandom$) will be used to denote random quantities; $z=\ubar{z}(\omega)$ will denote a realization of $\ubar{z}$.

\begin{algorithm}[htb]
\caption{STARS}
\label{algoStoScalTR} 
\textbf{[0] Initialization}\\
\hspace*{10mm}Choose constants $\gamma>1$, $\eta_1\in(0,1)$, $\eta_2>0$, $j_{\max}\in\N$, $\varepsilon\in(0,1)$, $\beta\in(0,1)$, \\ 
\hspace*{10mm}$c_1\geq 1$, initial trust-region radius $\delta_0>0$, and maximum trust-region radius\\
\hspace*{10mm}$\dmaxx=\gamma^{j_{\max}}\delta_0$, starting point $\bm{{\x}_0}\in\rn$, and dimension $p\in\intbracket{1,n}$.\\ 
\hspace*{10mm}Set the iteration counter $k \gets 0$.\\
\textbf{[1] Construction of subspace model}\\
\hspace*{10mm}Generate $\Qk$: a realization of a\footnotemark {\em WAM}($1-\varepsilon,\beta$), using a distribution $\mathbb{Q}_{k}$.\\
\hspace*{10mm}Build model $\mhatk:\rp\to\R$ that is $(\kef,\keg;\Qk)$-fully linear 
in  $\mathcal{B}(\xk,c_1\dk;\Qk)$.\\ 
\textbf{[2] Step calculation}\\
\hspace*{10mm}Compute $\sk\approx {\argmin}\left\{ \mhatk(\s) : \; \s\in\rp:\normii{\s}\leq \dk\right \}$ satisfying~\eqref{CauchyDecrease}.\\
\textbf{[3] Estimate computation}\\
\hspace*{10mm}Obtain estimates $\fok\approx f(\xk)$ and $\fsk\approx f(\xk+\Qk\sk)$ satisfying~\eqref{accEstimates}.\\
\textbf{[4] Updates}\\
\hspace*{10mm}Compute $\rho_k=\frac{\fok-\fsk}{\mhatk(\bm{0})-\mhatk(\sk)}.$\\
\hspace*{10mm}If $\rho_k\geq \eta_1$ and $\normii{\gkhat}\geq \eta_2\dk$ (\textbf{success}): \\
\hspace*{16mm}set $\bm{\xkun}=\xk+\Qk\sk$ and $\dkun=\min\accolade{\gamma\dk,\dmaxx}$. \\
\hspace*{10mm}Otherwise (\textbf{failure}): set $\bm{\xkun}=\xk$ and $\dkun=\gamma^{-1}\dk$.\\
\hspace*{10mm}Update the iteration counter $k \gets k+1$, and go to~\textbf{[1]}.\\
\end{algorithm}
\footnotetext{See Definition~\ref{wellAlignedDefinition} for the meaning of {\em WAM}($1-\varepsilon,\beta$).}

The deterministic models $\mhatk$ and the function estimates $\fok$ and $\fsk$ are constructed by using evaluations of the noisy function $f_{\ubar{\theta}}$ and the randomly selected entries of the deterministic matrix~$\Qk=\QkRandom(\omega)$.  Hence, $\mhatk$, $\fok$, and $\fsk$ can be considered as realizations of random models and estimates $\Mhatk$, $\Fok$, and $\Fsk$, respectively. Each iteration of Algorithm~\ref{algoStoScalTR} is therefore influenced by the behavior of these random quantities; and consequently, the algorithm results in a stochastic process. The present work shows that under certain conditions on the sequences~$\accoladekinN{\QkRandom}$, $\accoladekinN{\Fok,\Fsk}$, and $\accoladekinN{\Mhatk}$, the resulting stochastic process has desirable convergence properties, conditioned on the past, where {\it past} means the {\it past history} of the algorithm (see~Remark~\ref{whyTheseSigmaAlgebras}). In particular, for the needs of convergence and expected complexity analyses presented  in Sections~\ref{Section4} and~\ref{Section5}, and as will be seen  in Section~\ref{Section3}, the random models~$\Mhatk$ and estimates~$\Fok,\Fsk$ are required to be probabilistically sufficiently accurate conditioned on the past, as was the case in~\cite{chen2018stochastic}. Moreover, $\QkRandom$ must guarantee a high quality of the selected subspace probabilistically and conditioned on the past, as will be detailed in Section~\ref{Section31}. 

For all $k\in\N$, denote by $(\QkRandom)_{ij},  (i,j)\in \intbracket{1,n}\times\intbracket{1,p}=:\mathbb{I}_{n,p}$ the entries of the random matrix $\QkRandom$, where for $a\leq b$, $\intbracket{a,b}:=[a,b]\cap\mathbb{Z}$  throughout the manuscript. 
To formalize conditioning on the past, we consider the three filtrations $\accolade{\falgebra}_{k\in\N}$, $\accolade{\fqalgebra}_{k\in\N}$, and $\accolade{\fmqalgebra}_{k\in\N}$ defined respectively by 
\begin{equation*}
\begin{array}{rcl}
\falgebra &:=& \sigma\left(\Foell, \Fsell, \Mhatell,  (\QellRandom)_{ij} \mbox{ with}\  \ell \in \intbracket{0,k-1} \mbox{ \&}\ (i,j)\in \mathbb{I}_{n,p} \right)\\
\fqalgebra &:=& \sigma\left(\Foell, \Fsell, \Mhatell, (\QellRandom)_{ij}, (\QkRandom)_{ij} \mbox{ with}\  \ell \in \intbracket{0,k-1} \mbox{ \&}\ (i,j)\in \mathbb{I}_{n,p} \right)\\
\fmqalgebra &:=& \sigma\left(\Foell, \Fsell, \Mhatell, (\QellRandom)_{ij}, (\QkRandom)_{ij}, \ \Mhatk \mbox{ with}\  \ell \in \intbracket{0,k-1} \mbox{ \&}\ (i,j)\in \mathbb{I}_{n,p}  \right).
\end{array}
\end{equation*}
Here, for {\it completeness}~\cite{billingsley1995ThirdEdition}, $\mathcal{F}_{-1}=\sigma(\bm{\x_0})$.
By construction, $\falgebra\subset\fqalgebra\subset\fmqalgebra$, $\QkRandom$ is both~$\fqalgebra$ and~$\fmqalgebra$-measurable, and $\Mhatk$ is~$\fmqalgebra$-measurable, which imply that $\E{\QkRandom|\fqalgebra}=\E{\QkRandom|\fmqalgebra}=\QkRandom$ and $\E{\Mhatk|\fmqalgebra}=\Mhatk$. 

\begin{remark}\label{whyTheseSigmaAlgebras}
At a given iteration~$k$, when generating~$\QkRandom$, because no model or estimates were generated since the start of the iteration, it obviously follows from the construction of Algorithm~\ref{algoStoScalTR} that its past history can be formalized by~$\falgebra$. On the other hand, when constructing~$\Mhatk$, the past of the algorithm that includes~$\QkRandom$, given that the latter is already generated since the beginning of iteration~$k$, can therefore be formalized by~$\fqalgebra$. A similar observation  explains why~$\fmqalgebra$ can formalize the past history of Algorithm~\ref{algoStoScalTR} before the construction of~$\Fok$ and~$\Fsk$. These observations will also play an important role in the formalization of sufficient accuracy of the random models and estimates in Definitions~\ref{defProbQkfully} and~\ref{defProbAccEstim} and the formalization of high quality of the random subspaces in Assumption~\ref{wellAlignedAssumption}.  
\end{remark}

\section{Subspace selection and probabilistic models and estimates in random subspaces.}\label{Section3}
The random subspaces in which the models $\Mhatk$ are built are defined by specific random matrices. For efficiency of the proposed method and for convergence needs, not only must these subspaces be probabilistically rich enough, but also the accuracies of the models and estimates of unknown function values must hold with sufficiently high, but fixed, probabilities. This section discusses strategies for such random subspace selection, demonstrates by means of rigorous results how probabilistic estimates and subspace models can be constructed, and provides conditions that are necessary for the convergence of STARS.

\subsection{Subspace selection strategies.}\label{Section31}
To choose a subspace, 
there must be enough gradient living in the subspace to ensure analogous reduction of $f$. In other words, the matrix $\Qk\in\rnp$ determining the subspace needs to satisfy (probabilistically) $\normii{\Qk^\top\nabla f(\xk)}\geq \alpha_{Q}\normii{\nabla f(\xk)}$ at every iteration, for some constant $\alpha_{Q}\in (0,1)$ independent of $k$. This requirement motivates the following modified definition suggested by~\cite{CRsubspace2021,shao2022Thesis}, which basically says that at least some fraction of the gradient of~$f$ must be maintained after projecting it into the selected subspace.

\begin{definition}\label{wellAlignedDeterministic}
For $\alpha_{Q}\in (0,1)$, a matrix $\Q$ is $\alpha_{Q}$-well aligned if, for any vector $\bm{{\vi}}\in\rn$, $\normii{\Q^\top\bm{{\vi}}}\geq \alpha_{Q}\normii{\bm{{\vi}}}$.
\end{definition}

Recalling Remark~\ref{whyTheseSigmaAlgebras} and inspired by~\cite{CRsubspace2021}, we propose a probabilistic quantification of the quality of the subspace selection as follows.

\begin{definition}\label{wellAlignedDefinition}
For fixed $\alpha_Q,\beta_Q\in (0,1)$, a sequence $\accoladekinN{\QkRandom}$ of random matrices 
is $(1-\beta_Q)$-probabilistically $\alpha_{Q}$-well aligned if, for any $\F_{k-1}$-measurable random vector $\viRandom$ with realizations $\vi\in \rn$, the events 
$\mathcal{A}_k:= \accolade{\normii{\QkRandomTranspose\viRandom}\geq \alpha_{{Q}}\normii{\viRandom}}$ satisfy 
\begin{equation}\label{Akineq}
\prob{\mathcal{A}_k|\F_{k-1}}=\E{\iak|\F_{k-1}}\geq 1-\beta_Q,
\end{equation}
where $\iak$ denotes the indicator function of the event $\mathcal{A}_k$; that  is, $\iak(\omega)=1$ if $\omega\in \mathcal{A}_k$ and $\iak(\omega)=0$ otherwise. Random matrices satisfying~\eqref{Akineq} will be referred to as {WAM}($\alpha_Q,\beta_Q$).
\end{definition}
For convergence purposes  in Sections~\ref{Section4} and~\ref{Section5}, the following will be assumed throughout the manuscript.
\begin{assumption}\label{wellAlignedAssumption}
The sequence $\accoladekinN{\QkRandom}$ of matrices used by Algorithm~\ref{algoStoScalTR} is $(1-\beta_Q)$-probabilistically $\alpha_{Q}$-well aligned for some fixed $\alpha_Q\in (0,1)$ and $\beta_Q\in (0,1/2)$.
\end{assumption}



The next result from~\cite[Lemma~1]{LauBeaMaP2000} provides a slight generalization of an exponential inequality for chi-square distributions. It will be required later for the proof of Theorem~\ref{GaussianJLTmatrix}, inspired by~\cite[Theorem~2.1]{WoodrDavP2014}, providing a random {\it matrix ensemble} that will be shown in Corollary~\ref{JLCorollary1} to satisfy Assumption~\ref{wellAlignedAssumption}.
\begin{lemma}\label{BeatriceLaurent}
Let $(\yOne,\yTwo,\dots,\ypi)$ be i.i.d.\ Gaussian random variables with mean zero and variance one. Let $a_1$, $a_2$, $\dots$, $a_p$ be nonnegative real numbers. Let $\ubar{z}$ be the random variable defined by $\ubar{z}=\sum_{i=1}^{p}a_i(\yii^2-1)$. Then it holds that for any $t\geq 0$,
\[\prob{\ubar{z}\leq -2\normii{\bm{a}}\sqrt{t}}\leq e^{-t}.\]
\end{lemma}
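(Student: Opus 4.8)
The plan is to establish this one–sided tail bound by a standard Chernoff (exponential Markov) argument applied to $-\ubar z$. Throughout I would assume $\normii{\bm a}>0$ (otherwise $\ubar z\equiv 0$ and the threshold $\lambda^\star$ introduced below is undefined). Fixing $t\ge 0$ and writing $x=2\normii{\bm a}\sqrt t$, the first step is to note that for every $\lambda>0$,
\[
\prob{\ubar z\le -x}=\prob{e^{-\lambda \ubar z}\ge e^{\lambda x}}\le e^{-\lambda x}\,\EE{e^{-\lambda \ubar z}},
\]
the last inequality being Markov's. The entire proof then reduces to controlling the moment generating function $\EE{e^{-\lambda \ubar z}}$ and choosing $\lambda$ optimally at the end.

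Next I would compute this MGF using the independence of the $\yii$. Since $\ubar z=\sum_{i=1}^p a_i(\yii^2-1)$, factoring across $i$ gives
\[
\EE{e^{-\lambda \ubar z}}=\prod_{i=1}^p e^{\lambda a_i}\,\EE{e^{-\lambda a_i \yii^2}}=\prod_{i=1}^p e^{\lambda a_i}(1+2\lambda a_i)^{-1/2},
\]
where I use the classical Gaussian identity $\EE{e^{-s\yii^2}}=(1+2s)^{-1/2}$, valid for every $s=\lambda a_i\ge 0$; note that, unlike for the upper tail, no restriction on $\lambda>0$ is needed on this side because $a_i\ge 0$. Taking logarithms, the exponent becomes $\tfrac12\sum_{i=1}^p\bigl(2\lambda a_i-\log(1+2\lambda a_i)\bigr)$.

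The key estimate is the elementary scalar inequality $u-\log(1+u)\le u^2/2$ for all $u\ge 0$, which I would verify by setting $g(u):=u^2/2-u+\log(1+u)$ and checking $g(0)=0$ together with $g'(u)=u^2/(1+u)\ge 0$, so $g\ge 0$ on $[0,\infty)$. Applying it with $u=2\lambda a_i$ bounds each summand by $(2\lambda a_i)^2/2$, whence
\[
\EE{e^{-\lambda \ubar z}}\le \exp\Bigl(\lambda^2\sum_{i=1}^p a_i^2\Bigr)=\exp\bigl(\lambda^2\normii{\bm a}^2\bigr).
\]
Combining this with the Chernoff bound yields $\prob{\ubar z\le -x}\le \exp\bigl(-\lambda x+\lambda^2\normii{\bm a}^2\bigr)$. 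Finally I would minimize the quadratic exponent over $\lambda>0$, obtaining $\lambda^\star=x/(2\normii{\bm a}^2)$ and optimal exponent $-x^2/(4\normii{\bm a}^2)$; substituting $x=2\normii{\bm a}\sqrt t$ makes this exactly $-t$, which is the claim.

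I expect no genuine obstacle here, since the argument is entirely routine; the one point deserving care is the scalar inequality $u-\log(1+u)\le u^2/2$, as it is precisely what converts the log-MGF into the clean sub-Gaussian-type bound $\lambda^2\normii{\bm a}^2$ and thereby produces the stated constant $2$ after optimization in $\lambda$.
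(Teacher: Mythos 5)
Your argument is correct. Note, however, that the paper does not prove this lemma at all: it is imported verbatim as \cite[Lemma~1]{LauBeaMaP2000} (Laurent--Massart), so there is no in-paper proof to compare against. What you have written is a complete, self-contained reconstruction of the standard proof of that lower-tail bound: the Chernoff bound on $-\ubar{z}$, the exact Gaussian MGF identity $\EE{e^{-s\yii^2}}=(1+2s)^{-1/2}$ for $s\ge 0$, the scalar inequality $u-\log(1+u)\le u^2/2$ on $[0,\infty)$ (correctly verified via $g'(u)=u^2/(1+u)\ge 0$), and the optimization $\lambda^\star=x/(2\normii{\bm{a}}^2)$ giving exponent $-x^2/(4\normii{\bm{a}}^2)=-t$. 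All steps check out, including your observation that no upper restriction on $\lambda$ is needed on this side of the tail since $a_i\ge 0$. Your aside about the degenerate case $\normii{\bm{a}}=0$ is also apt: there the stated inequality actually fails for $t>0$ (since $\ubar{z}\equiv 0$ makes the left-hand side equal to $1$), a harmless edge case present in the original Laurent--Massart statement as well, and excluding it as you do is the right call. The only value judgment to add is that supplying this proof makes the manuscript more self-contained than the citation-only treatment in the paper, at the cost of a page of routine computation.
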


\begin{theorem}\label{GaussianJLTmatrix}
Let $\varepsilon>0$, $\beta<1$, $p\geq 4\varepsilon^{-2}\log(1/\beta)$, and $\RMatrix=\left({\RrandomVar}_{ij}\right)_{\underset{1\leq j\leq n}{1\leq i\leq p}}\in \rpn$ be a random matrix with i.i.d.\ standard Gaussian entries. Let $\SMatrix=\left(\SrandomVar_{ij}\right)_{\underset{1\leq j\leq n}{1\leq i\leq p}}\in \rpn$ be a random matrix defined by $\SMatrix=\frac{1}{\sqrt{p}}\RMatrix$; that is, $\SrandomVar_{ij}=\frac{1}{\sqrt{p}}\RrandomVar_{ij}\sim\n(0,1/p)$. Then for all $\vi\in\rn$,
$\pr\left[\normii{\SMatrix\vi}^2\geq (1-\varepsilon)\normii{\vi}^2\right]\geq 1-\beta$.
\end{theorem}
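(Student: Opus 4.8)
The plan is to reduce the claim to a one-sided lower-tail bound for a chi-square variable and then apply Lemma~\ref{BeatriceLaurent} with a parameter $t$ matched precisely to $\varepsilon$. First, the case $\vi=\zerob$ is trivial, since both sides of the target inequality vanish and the event has probability one; so I would fix an arbitrary $\vi\neq\zerob$. The key observation is that each coordinate of $\SMatrix\vi$ is, up to a deterministic scaling, a standard Gaussian. Writing $(\SMatrix\vi)_i=\frac{1}{\sqrt p}\sum_{j=1}^n\RrandomVar_{ij}\vi_j$, the inner sum is a linear combination of i.i.d.\ $\n(0,1)$ variables and is therefore $\n(0,\normii{\vi}^2)$-distributed. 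Setting $\yii:=\frac{1}{\normii{\vi}}\sum_{j=1}^n\RrandomVar_{ij}\vi_j$ yields i.i.d.\ $\n(0,1)$ variables, the independence across $i$ coming from the independence of the rows of $\RMatrix$. Hence $(\SMatrix\vi)_i=\frac{\normii{\vi}}{\sqrt p}\,\yii$, so that
\[
\frac{\normii{\SMatrix\vi}^2}{\normii{\vi}^2}=\frac{1}{p}\sum_{i=1}^p\yii^2 .
\]

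With this reduction, the complement of the event of interest, namely $\{\normii{\SMatrix\vi}^2<(1-\varepsilon)\normii{\vi}^2\}$, becomes $\{\sum_{i=1}^p\yii^2<p(1-\varepsilon)\}$, which is contained in $\{\sum_{i=1}^p(\yii^2-1)\leq -p\varepsilon\}$. I would then invoke Lemma~\ref{BeatriceLaurent} with $a_1=\dots=a_p=1$, so that $\ubar{z}=\sum_{i=1}^p(\yii^2-1)$ and $\normii{\bm a}=\sqrt p$. Choosing $t=p\varepsilon^2/4$ makes $-2\normii{\bm a}\sqrt t=-p\varepsilon$, and the lemma gives
\[
\prob{\sum_{i=1}^p(\yii^2-1)\leq -p\varepsilon}\leq e^{-p\varepsilon^2/4}.
\]

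Finally, the hypothesis $p\geq 4\varepsilon^{-2}\log(1/\beta)$ is exactly what is needed to control the right-hand side: it gives $p\varepsilon^2/4\geq\log(1/\beta)$ and therefore $e^{-p\varepsilon^2/4}\leq\beta$. Combining this with the inclusion of events above bounds the probability of the complement by $\beta$, and taking complements yields $\pr[\normii{\SMatrix\vi}^2\geq(1-\varepsilon)\normii{\vi}^2]\geq 1-\beta$, as claimed. I do not expect a serious obstacle here; the only two points deserving care are the distributional reduction—justifying rigorously that the $\yii$ are genuinely i.i.d.\ standard Gaussians, which rests on the linearity and the row-independence of the Gaussian entries of $\RMatrix$—and the exact calibration of $t$ to the threshold, so that the assumption on $p$ translates precisely into the desired confidence $\beta$ rather than merely an order-of-magnitude estimate.
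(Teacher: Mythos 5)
Your proof is correct and follows essentially the same route as the paper's: reduce $\normii{\SMatrix\vi}^2/\normii{\vi}^2$ to a normalized $\chi^2_p$ variable via the row-independence of the Gaussian entries, then apply Lemma~\ref{BeatriceLaurent} with $a_i=1$ and $t=\varepsilon^2 p/4$ so that the hypothesis on $p$ gives $e^{-\varepsilon^2 p/4}\leq\beta$. If anything, your version is slightly tidier: you handle $\vi=\zerob$ explicitly, and you avoid the stray factor of $2$ that appears in the paper's closing sentence (a leftover from the two-sided statement in the cited source), which its own one-sided derivation does not actually need.
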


\begin{proof}
The proof is inspired by that of~\cite[Lemma~2.12]{WoodrDavP2014}. Consider any deterministic vector\\ $\vi=\left(v_1,v_2,\dots,v_n\right)^\top\in\rn$,  and denote by $\sRandom_{: j}\in\R^p, \, j \in\intbracket{1,n}$ the columns of the random matrix $\SMatrix$. Then the $i$th component of the $p$-tuple $\SMatrix\vi=\sum_{j=1}^{n}v_j\sRandom_{: j}$ is the Gaussian random variable $(\SMatrix\vi)_i=\sum_{j=1}^{n}v_j{\SrandomVar}_{i j}$ with mean zero and variance $\V\left[(\SMatrix\vi)_i\right]= \sum_{j=1}^{n}v_j^2\var{{\SrandomVar}_{i j}}=\frac{\normii{\vi}^2}{p}$. It follows from the independence of $(\SMatrix\vi)_i, \, i\in\intbracket{1,p}$ that the random variable $\normii{\SMatrix\vi}^2=\sum_{i=1}^p(\SMatrix\vi)_i^2$ is equal in distribution to $\frac{\normii{\vi}^2}{p}{\ubar{w}}$, where ${\ubar{w}}$ is a $\chi^2_p$ random variable with $p$ degrees of freedom. Applying Lemma~\ref{BeatriceLaurent} with $t=\frac{\varepsilon^2p}{4}$ and $a_i=1,\ i\in\intbracket{1,p}$, yields $\prob{{\ubar{w}}-p\leq -\varepsilon p}\leq e^{-\frac{\varepsilon^2p}{4}}.$ 
Hence, $\pr\left[\normii{\SMatrix\vi}^2\geq (1-\varepsilon)\normii{\vi}^2\right]=1-\pr\left[{\ubar{w}}\leq(1-\varepsilon)p\right]\geq 1-e^{-\frac{\varepsilon^2p}{4}}$.
The proof is completed by noticing that $1-2e^{-\frac{\varepsilon^2p}{4}}\geq 1-\beta$ if $p\geq 4\varepsilon^{-2}\log(1/\beta)$.
\end{proof}

The next corollary shows that Assumption~\ref{wellAlignedAssumption} can be satisfied by using matrices resulting from Theorem~\ref{GaussianJLTmatrix}, and whose number~$p$ of columns does not depend on~$n$.
\begin{corollary}\label{JLCorollary1}
At a given iteration $k$, let the subspace selection matrix $\QkRandomTranspose=\SMatrix\in \rpn$ be provided by Theorem~\ref{GaussianJLTmatrix}, with $\varepsilon=1-\alpha_Q$ and $\beta=\beta_Q$, for some $\alpha_Q\in (0,1)$ and $\beta_Q\in (0,1/2)$. Assume that $\QkRandom$ is independent of $\QzeroRandom, \QOneRandom,\dots, \QkmunRandom$ and ${\ubar{\theta}}$. Then Assumption~\ref{wellAlignedAssumption} holds; that is, for any $\viRandom$ $\falgebra$-measurable, 
\begin{equation*}
\prob{\accolade{\normii{\QkRandomTranspose\viRandom}\geq \alpha_{{Q}}\normii{\viRandom}}|\F_{k-1}}\geq 1-\beta_Q.
\end{equation*}
\end{corollary}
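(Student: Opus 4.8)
The plan is to derive Corollary~\ref{JLCorollary1} directly from Theorem~\ref{GaussianJLTmatrix} by showing that the unconditional high-probability bound $\pr[\normii{\SMatrix\vi}^2\geq (1-\varepsilon)\normii{\vi}^2]\geq 1-\beta$, valid for each fixed deterministic $\vi$, upgrades to the conditional bound~\eqref{Akineq} when $\vi$ is replaced by the $\falgebra$-measurable random vector $\viRandom$. Setting $\varepsilon=1-\alpha_Q$ and $\beta=\beta_Q$, the event $\{\normii{\SMatrix\vi}^2\geq (1-\varepsilon)\normii{\vi}^2\}$ is exactly $\{\normii{\QkRandomTranspose\vi}\geq \alpha_Q\normii{\vi}\}=\mathcal{A}_k$ once we identify $\QkRandomTranspose=\SMatrix$ (so that $\normii{\QkRandomTranspose\vi}^2\geq\alpha_Q^2\normii{\vi}^2$ is equivalent to the displayed square-root form). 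Thus the whole task reduces to a conditioning argument.

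First I would exploit the independence hypothesis. Since $\viRandom$ is $\falgebra$-measurable and $\QkRandom$ is generated independently of $\QzeroRandom,\dots,\QkmunRandom$ and of ${\ubar{\theta}}$, the matrix $\QkRandom$ is independent of the entire $\sigma$-algebra $\falgebra$ (which is generated by past estimates, models, and past subspace matrices, all measurable with respect to quantities independent of $\QkRandom$). Consequently $\QkRandom$ is independent of $\viRandom$. The standard tool here is the \emph{freezing} (or \emph{independence}) lemma for conditional expectations: if $\viRandom$ is $\falgebra$-measurable and $\QkRandom$ is independent of $\falgebra$, then for the Borel-measurable indicator $g(\Q,\vi):=\mathds{1}\{\normii{\Q^\top\vi}\geq\alpha_Q\normii{\vi}\}$ one has
\[
\E{g(\QkRandom,\viRandom)\,\big|\,\falgebra}=\Big(\E{g(\QkRandom,\vi)}\Big)\Big|_{\vi=\viRandom}
=\prob{\normii{\QkRandomTranspose\vi}\geq\alpha_Q\normii{\vi}}\Big|_{\vi=\viRandom}.
\]
That is, conditioning on $\falgebra$ and then integrating out the independent $\QkRandom$ collapses to evaluating the \emph{unconditional} probability from Theorem~\ref{GaussianJLTmatrix} at the frozen deterministic value $\vi=\viRandom(\omega)$.

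Finally I would invoke Theorem~\ref{GaussianJLTmatrix}: for \emph{every} deterministic $\vi\in\rn$ (including $\vi=\bm{0}$, where the inequality holds trivially), the inner unconditional probability is at least $1-\beta=1-\beta_Q$. Since this lower bound is uniform over all realizations of $\viRandom$, substituting back gives $\E{\iak|\falgebra}\geq 1-\beta_Q$ almost surely, which is precisely~\eqref{Akineq}, establishing Assumption~\ref{wellAlignedAssumption}. I expect the only genuine subtlety—\emph{the main obstacle}—to be the careful justification of the freezing step: one must confirm that $\falgebra$ truly is independent of $\QkRandom$ (tracing through the generators of $\falgebra$ to see they depend only on $\QzeroRandom,\dots,\QkmunRandom$ and ${\ubar\theta}$, all independent of $\QkRandom$ by hypothesis) and that the map $g$ is jointly measurable so the conditional-expectation identity applies. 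Everything else is a direct citation of the already-proved Theorem~\ref{GaussianJLTmatrix} together with the algebraic identification $\normii{\QkRandomTranspose\vi}\geq\alpha_Q\normii{\vi}\iff\normii{\SMatrix\vi}^2\geq(1-\varepsilon)\normii{\vi}^2$.
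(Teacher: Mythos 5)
Your proposal follows essentially the same route as the paper: reduce to a deterministic $\vi$ via the independence of $\QkRandom$ from $\falgebra$ (the freezing argument, which the paper states in one line and you justify more carefully), then invoke Theorem~\ref{GaussianJLTmatrix}. One algebraic slip, though harmless here: the event from the theorem, $\accolade{\normii{\SMatrix\vi}^2\geq (1-\varepsilon)\normii{\vi}^2}$, is \emph{not} exactly $\mathcal{A}_k=\accolade{\normii{\SMatrix\vi}\geq \alpha_Q\normii{\vi}}$ with $\alpha_Q=1-\varepsilon$; the latter is $\accolade{\normii{\SMatrix\vi}^2\geq (1-\varepsilon)^2\normii{\vi}^2}$, so the two differ by a factor of $(1-\varepsilon)$ versus $(1-\varepsilon)^2$ and your claimed equivalence is false. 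What saves the argument, and what the paper states explicitly, is the one-sided inclusion $\accolade{\normii{\SMatrix\vi}^2\geq (1-\varepsilon)\normii{\vi}^2}\subseteq\accolade{\normii{\SMatrix\vi}\geq (1-\varepsilon)\normii{\vi}}$, valid because $1-\varepsilon\geq(1-\varepsilon)^2$, which points in the direction needed to transfer the lower bound on the probability to $\mathcal{A}_k$.
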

\begin{proof} Since $\QkRandom$ is independent of $\falgebra$ and $\viRandom$ is $\falgebra$-measurable, it suffices to show that\\ $\prob{\accolade{\normii{\QkRandomTranspose\vi}\geq \alpha_{{Q}}\normii{\vi}}}\geq 1-\beta_Q$
for any deterministic vector $\vi\in\rn$, which easily follows from Theorem~\ref{GaussianJLTmatrix} and the inclusion $\accolade{\normii{\SMatrix\vi}^2\geq (1-\varepsilon)\normii{\vi}^2} \subseteq \accolade{\normii{\SMatrix\vi}\geq (1-\varepsilon)\normii{\vi}}$ due to the inequality $1-\varepsilon\geq (1-\varepsilon)^2$.
\end{proof}


Inspired by~\cite{CRsubspace2021}, a technical assumption needed for convergence and expected complexity analyses later in Sections~\ref{Section4} and~\ref{Section5} is the following. 
\begin{assumption}\label{QmaxAssumption}
Any realization $\Qk$ of the random matrix used by Algorithm~\ref{algoStoScalTR} satisfies $\norme{\Qk}\leq \Qmax$ for all $k\in\N$, for some constant $\Qmax >0$ independent of~$k$.
\end{assumption}
The next theorem, partially proved in the Appendix and formulated based on results from~\cite{KaNel2014SparseLidenstrauss}, provides another technique for constructing {\em WAM}($1-\varepsilon,\beta$) satisfying Assumption~\ref{wellAlignedAssumption} through so-called Johnson--Lindenstrauss (JL) transforms~\cite{JohnsonLind1984,KaNel2014SparseLidenstrauss}, and whose number~$p$ of columns does not depend on~$n$. Moreover, one can easily see that the resulting matrix ensemble satisfies Assumption~\ref{QmaxAssumption} with $\Qmax=\sqrt{n}$, unlike the one from Theorem~\ref{GaussianJLTmatrix} where $\norme{\QkRandom}\leq \Qmax$ holds with high probability with\footnote{The reader is referred to~\cite{knuth1976big} for details regarding the notation~$\Theta(\cdot)$.} $\Qmax=\Theta(\sqrt{n/p})$~\cite[Corollary~3.11]{bandeira2016sharp}.
\begin{theorem}\label{sHashingTheorem}
Let $\varepsilon>0$, $\beta<1/2$, and $\SMatrix=\left(\SrandomVar_{ij}\right)_{\underset{1\leq j\leq n}{1\leq i\leq p}}\in \rpn$ be a real-valued random matrix defined by
$\SrandomVar_{ij}=\frac{1}{\sqrt{r}}\etarandomVar_{ij}\sigmarandomVar_{ij},$
where $r\in\N\setminus\accolade{0}$ and $\sigmarandomVar_{ij}$ are independent Rademacher random variables. The $\etarandomVar_{ij}$ satisfying $\prob{\sum_{i=1}^p\etarandomVar_{ij}=r}=1$ for all $j\in\intbracket{1,n}$ are negatively correlated indicator random variables for the events $\accolade{\SrandomVar_{ij}\neq 0}$; that  is,  they satisfy
$\E{\underset{(i,j)\in T}{\prod}\etarandomVar_{ij}}\leq (r/p)^{\abs{T}}$ 
for all $T\subseteq\intbracket{1,p}\times\intbracket{1,n}$ with $\abs{T}\leq\ell$, where $\ell\geq \ln\left[\beta^{-1}(\ell+1)\ell/2\right]$. Then $\pr\left[(1-\varepsilon)\normii{\vi}\leq\normii{\SMatrix\vi}\leq (1+\varepsilon)\normii{\vi}\right] > 1-\beta$ for any $\vi\in\rn$, provided $r\geq 8e^4\sqrt{e}(\ell+1)/(2\varepsilon-\varepsilon^2)$ and $p=2r^2/(e\ell)$.
\end{theorem}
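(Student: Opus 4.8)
The plan is to establish the two-sided norm bound through a high-moment concentration argument for the off-diagonal Rademacher chaos $\normii{\SMatrix\vi}^2-\normii{\vi}^2$, closed off by Markov's inequality. First I would reduce to a unit vector: by homogeneity it suffices to treat $\normii{\vi}=1$, and since $(1-\varepsilon)^2=1-(2\varepsilon-\varepsilon^2)$ while $(1+\varepsilon)^2\geq 1+(2\varepsilon-\varepsilon^2)$, setting $\tau:=2\varepsilon-\varepsilon^2$ the target event $\accolade{(1-\varepsilon)\normii{\vi}\leq\normii{\SMatrix\vi}\leq(1+\varepsilon)\normii{\vi}}$ is implied by $\accolade{\abs{\normii{\SMatrix\vi}^2-1}\leq\tau}$ (taking nonnegative square roots). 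Hence it is enough to bound $\prob{\abs{Z}\geq\tau}$, where $Z:=\normii{\SMatrix\vi}^2-1$.

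Next I would write $Z$ explicitly. Using $(\SMatrix\vi)_i=r^{-1/2}\sum_j\etarandomVar_{ij}\sigmarandomVar_{ij}v_j$ and expanding $\normii{\SMatrix\vi}^2=\sum_i(\SMatrix\vi)_i^2$, the diagonal ($j=k$) contribution is $r^{-1}\sum_i\sum_j\etarandomVar_{ij}^2\sigmarandomVar_{ij}^2 v_j^2=r^{-1}\sum_j v_j^2\sum_i\etarandomVar_{ij}=\sum_j v_j^2=1$ deterministically, since $\etarandomVar_{ij}^2=\etarandomVar_{ij}$, $\sigmarandomVar_{ij}^2=1$, and $\sum_i\etarandomVar_{ij}=r$ almost surely. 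Taking expectations of the remaining part and using $\E{\sigmarandomVar_{ij}\sigmarandomVar_{ik}}=0$ for $j\neq k$ then gives $\E{\normii{\SMatrix\vi}^2}=1$ and leaves $Z=r^{-1}\sum_{i=1}^p\sum_{j\neq k}\etarandomVar_{ij}\etarandomVar_{ik}\sigmarandomVar_{ij}\sigmarandomVar_{ik}v_jv_k$, an off-diagonal quadratic form in the signs.

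The heart of the proof is an $\ell$-th moment bound for $Z$ (with $\ell$ even), followed by $\prob{\abs{Z}\geq\tau}\leq\E{Z^\ell}/\tau^\ell$. I would expand $Z^\ell$ as a sum over $\ell$-tuples of rows $i_t$ and coordinate pairs $(j_t,k_t)$ with $j_t\neq k_t$. Independence and mean-zero of the Rademacher variables force $\E{\prod_t\sigmarandomVar_{i_tj_t}\sigmarandomVar_{i_tk_t}}$ to vanish unless every distinct pair $(i,j)$ occurs an even number of times; as there are only $2\ell$ sign-slots, at most $\ell$ distinct pairs can survive, so the active set $T$ always satisfies $\abs{T}\leq\ell$. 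On these surviving configurations the negative-correlation hypothesis applies verbatim, bounding the hashing contribution by $\E{\prod_{(i,j)\in T}\etarandomVar_{ij}}\leq(r/p)^{\abs{T}}$. Grouping the surviving terms by the number of distinct rows used and by the induced even-degree multigraph on the coordinates, and summing the weights $\prod v_{j_t}v_{k_t}$ via $\sum_j v_j^2=1$, yields (after the Kane--Nelson counting) a bound of the form $\E{Z^\ell}\leq\tfrac{(\ell+1)\ell}{2}\bigl(c\,\ell/r\bigr)^\ell$, where the choice $p=2r^2/(e\ell)$ is precisely what balances the "number-of-rows" term $\sqrt{\ell/p}$ against the sparsity term $\ell/r$ so that a single factor $\ell/r$ governs the moment.

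Finally I would substitute the parameters. Using $r\geq 8e^4\sqrt e(\ell+1)/\tau$, i.e.\ $\tau r\geq 8e^4\sqrt e(\ell+1)>8e^4\sqrt e\,\ell$, makes $c\ell/(r\tau)\leq 1/e$, whence $\E{Z^\ell}/\tau^\ell\leq\tfrac{(\ell+1)\ell}{2}e^{-\ell}$; the hypothesis $\ell\geq\ln[\beta^{-1}(\ell+1)\ell/2]$ is exactly the statement $\tfrac{(\ell+1)\ell}{2}e^{-\ell}\leq\beta$, giving $\prob{\abs{Z}\geq\tau}<\beta$ and hence the claim. The clearly hardest step is the moment estimate: correctly enumerating the even-multiplicity sign configurations, bounding each through the negative-correlation property, and carrying out the combinatorial sum over row- and coordinate-patterns with the stated constants is the delicate part, whereas the reduction, the mean computation, and the final Markov/parameter bookkeeping are routine.
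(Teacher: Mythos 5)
Your proposal is correct and follows essentially the same route as the paper: the paper's own proof consists of exactly your opening reduction (normalize to $\normii{\vi}=1$ and observe that $\abs{\normii{\SMatrix\vi}^2-1}\leq 2\varepsilon-\varepsilon^2$ implies the two-sided bound) and then cites Kane--Nelson's Theorem~4.3 for the tail estimate $\prob{\abs{\normii{\SMatrix\vi}^2-1}>2\varepsilon-\varepsilon^2}<\beta$ under the stated choices of $\ell$, $r$, and $p$. Your further outline of the $\ell$-th moment/Markov argument is a faithful sketch of what is inside that cited result, but since you likewise defer the decisive combinatorial estimate to ``the Kane--Nelson counting,'' the substance of the argument is the same.
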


The next corollary shows that {\em WAM}($\alpha_Q,\beta_Q$) satisfying Assumptions~\ref{wellAlignedAssumption} and~\ref{QmaxAssumption} can be provided by specific JL transforms $\SMatrix$ resulting from Theorem~\ref{sHashingTheorem}.
\begin{corollary}
At a given iteration $k$, let the subspace selection matrix $\QkRandomTranspose=\SMatrix\in \rpn$ be provided by Theorem~\ref{sHashingTheorem}, with $\varepsilon=1-\alpha_Q$ and $\beta=\beta_Q$. Assume that $\QkRandom$ is independent of $\QzeroRandom, \QOneRandom,\dots, \QkmunRandom$ and~${\ubar{\theta}}$. Then Assumption~\ref{wellAlignedAssumption} holds.
\end{corollary}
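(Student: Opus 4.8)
The plan is to follow the argument of Corollary~\ref{JLCorollary1} almost verbatim, the only difference being that Theorem~\ref{sHashingTheorem} delivers a two-sided concentration bound directly on $\normii{\SMatrix\vi}$, so no passage through the squared norm is needed. First I would reduce the conditional inequality~\eqref{Akineq} of Definition~\ref{wellAlignedDefinition} to a purely deterministic statement. Since $\QkRandom$, and hence $\QkRandomTranspose=\SMatrix$, is assumed independent of $\falgebra$ while $\viRandom$ is $\falgebra$-measurable, a standard freezing (independence) argument gives
\[
\prob{\accolade{\normii{\QkRandomTranspose\viRandom}\geq \alpha_Q\normii{\viRandom}}\,\big|\,\falgebra}=\varphi(\viRandom),\qquad \varphi(\vi):=\prob{\accolade{\normii{\SMatrix\vi}\geq \alpha_Q\normii{\vi}}}.
\]
It therefore suffices to prove $\varphi(\vi)\geq 1-\beta_Q$ for every deterministic $\vi\in\rn$.

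For the deterministic bound I would invoke the event inclusion
\[
\accolade{(1-\varepsilon)\normii{\vi}\leq\normii{\SMatrix\vi}\leq(1+\varepsilon)\normii{\vi}}\subseteq\accolade{\normii{\SMatrix\vi}\geq(1-\varepsilon)\normii{\vi}},
\]
and then substitute $\varepsilon=1-\alpha_Q$, so that $(1-\varepsilon)\normii{\vi}=\alpha_Q\normii{\vi}$ and the right-hand event is exactly $\accolade{\normii{\SMatrix\vi}\geq\alpha_Q\normii{\vi}}$. Monotonicity of $\pr$ together with Theorem~\ref{sHashingTheorem}, applied with $\varepsilon=1-\alpha_Q\in(0,1)$ and $\beta=\beta_Q\in(0,1/2)$ (both within the hypotheses of that theorem), then yields $\varphi(\vi)\geq\prob{(1-\varepsilon)\normii{\vi}\leq\normii{\SMatrix\vi}\leq(1+\varepsilon)\normii{\vi}}>1-\beta_Q$, which is even stronger than the required $\geq 1-\beta_Q$. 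This is where the argument is slightly shorter than that of Corollary~\ref{JLCorollary1}: the inequality $1-\varepsilon\geq(1-\varepsilon)^2$ used there to drop from $\normii{\SMatrix\vi}^2$ to $\normii{\SMatrix\vi}$ is unnecessary here.

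The one step that deserves care is the freezing identity. It follows from the elementary fact that for a jointly measurable $g$, independence of $\SMatrix$ from $\falgebra$ and $\falgebra$-measurability of $\viRandom$ imply $\E{g(\SMatrix,\viRandom)\,|\,\falgebra}=\varphi(\viRandom)$ with $\varphi(\vi)=\E{g(\SMatrix,\vi)}$; taking $g$ to be the indicator of $\accolade{\normii{\SMatrix\vi}\geq\alpha_Q\normii{\vi}}$ produces the displayed conditional probability. I expect this measure-theoretic reduction, rather than the concentration estimate itself, to be the main (though routine) obstacle, exactly as in Corollary~\ref{JLCorollary1}. The accompanying claim that the ensemble also satisfies Assumption~\ref{QmaxAssumption} with $\Qmax=\sqrt{n}$ is immediate from the sparse sign structure of $\SMatrix$ in Theorem~\ref{sHashingTheorem} and was already noted in the text preceding it.
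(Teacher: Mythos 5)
Your proposal is correct and follows essentially the same route as the paper: reduce the conditional statement to a deterministic one via the independence of $\QkRandom$ from $\falgebra$ (as in Corollary~\ref{JLCorollary1}), then apply Theorem~\ref{sHashingTheorem} together with the inclusion $\accolade{(1-\varepsilon)\normii{\vi}\leq\normii{\SMatrix\vi}\leq (1+\varepsilon)\normii{\vi}} \subseteq \accolade{\normii{\SMatrix\vi}\geq (1-\varepsilon)\normii{\vi}}$ and the substitution $\varepsilon=1-\alpha_Q$. Your observation that the $1-\varepsilon\geq(1-\varepsilon)^2$ step is unnecessary here is exactly why the paper uses this inclusion rather than the squared-norm one.
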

\begin{proof}
As was explained in the proof of Corollary~\ref{JLCorollary1}, it suffices to show that for all deterministic vectors $\vi\in\rn$, $\prob{\accolade{\normii{\QkRandomTranspose\vi}\geq \alpha_{{Q}}\normii{\vi}}}\geq 1-\beta_Q$. This immediately follows from the result of Theorem~\ref{sHashingTheorem}, together with the inclusion
$\accolade{(1-\varepsilon)\normii{\vi}\leq\normii{\SMatrix\vi}\leq (1+\varepsilon)\normii{\vi}} \subseteq \accolade{\normii{\SMatrix\vi}\geq (1-\varepsilon)\normii{\vi}}.$
\end{proof}

\subsection{Probabilistic models and estimates in random subspaces.}
As mentioned at the beginning of Section~\ref{Section3}, the models used by Algorithm~\ref{algoStoScalTR} need to be sufficiently accurate. Motivated by~\cite{chen2018stochastic}, this sufficient accuracy is formalized for deterministic subspace models by the following measure of accuracy inspired by~\cite{CRsubspace2021}.
\begin{definition}\label{QkFullyLinearModels}
Assume $\nabla f$ is  Lipschitz continuous. Given $\Qk\in\rnp$ and $\xk\in\rn$, a function $\mhatk:\rp\to\R$ is a $(\kef,\keg; \Qk)$-fully linear model of $f$ in $\mathcal{B}(\xk,\dk;\Qk)$ for some $\dk>0$ if there exist constants $\kappa_{ef}, \kappa_{eg} >0$ independent of $k$ such that for all $\s\in\rp$ with $\normii{\s}\leq \dk$,
\begin{equation}\label{Eq36}
\abs{f(\xk +\Qk\s)-\mhatk(\s)}\leq \kappa_{ef}\dk^2\quad \mbox{and}\quad
\normii{\Qk^{\top}\nabla f(\xk +\Qk\s)-\nabla\mhatk(\s)}\leq \kappa_{eg}\dk.
\end{equation}
\end{definition}
As pointed out in~\cite{CRsubspace2021}, the gradient condition in~\eqref{Eq36} is due to the fact that given any fixed~$\x$ and~$\Q$, if $\hat{f}(\s):=f(\x+\Q\s)$, then $\nabla_{\s} \hat{f}(\s)=\Q^{\top}\nabla_{\x} f(\x+\Q\s)$. Moreover, denoting by $\bm{I}$ the identity matrix in full-dimensional subspaces where $p=n$, then $(\kef,\keg; \bm{I})$-fully linear models correspond to standard fully linear models~\cite[Definition~6.1]{CoScVibook}.

Recalling Remark~\ref{whyTheseSigmaAlgebras}, the next definition formalizing sufficient accuracy of probabilistic subspace models $\Mhatk$ is a stochastic variant of Definition~\ref{QkFullyLinearModels}, which generalizes~\cite[Definition~3.4]{chen2018stochastic} to low-dimensional subspaces. The existence of such models will be rigorously demonstrated  in Theorem~\ref{subspaceModel1} and Corollary~\ref{CorQkFully}.
\begin{definition}\label{defProbQkfully}
A sequence $\accolade{\Mhatk}$ of random models is $\beta_m$-probabilistically \\ $(\kef,\keg; \Qk)$-fully linear with respect to the random sequence $\accolade{\mathcal{B}(\Xk,\Dk;\QkRandom)}$ if the events
\[I_k^Q=\accolade{\mbox{Given}\ \Qk\ \mbox{and}\ \xk, \Mhatk\ \mbox{is}\ (\kef,\keg;\Qk)\mbox{-fully linear for}\ f\ \mbox{in}\ \mathcal{B}(\Xk,\Dk;\QkRandom)}\]
satisfy the supermartingale-like condition
\begin{equation}\label{ikQProb}
\prob{I^Q_k|\fqalgebra}=\E{\iik|\fqalgebra}\geq \beta_m.
\end{equation}
\end{definition}
Recalling that estimates of unknown function values also need to be sufficiently accurate, we introduce a formalization of sufficient accuracy inspired by~\cite{audet2019stomads,chen2018stochastic,dzahini2020expected,dzahini2020constrained,paquette2018stochastic} and motivated by~\cite{CRsubspace2021} as follows.
\begin{definition}
Given $\ef>0$ and $\Qk\sk\in\rn, \, \xk\in\rn$, $\fok$ and $\fsk$ are called $\ef$-accurate estimates of $f(\xk)$ and $f(\xk+\Qk\sk)$, respectively, for a given $\dk$ if
\begin{equation}\label{accEstimates}
\abs{\fok-f(\xk)}\leq\ef\dk^2\quad \mbox{ and }\quad  \abs{\fsk-f(\xk+\Qk\sk)}\leq\ef\dk^2.
\end{equation}
\end{definition}

Recalling Remark~\ref{whyTheseSigmaAlgebras},
we  extend  this definition to the following stochastic variant.
\begin{definition}\label{defProbAccEstim}
A sequence of random estimates $\accolade{\Fok,\Fsk}$ is said to be $\beta_f$-probabilistically $\ef$-accurate with respect to the corresponding sequence $\accolade{\Xk,\Dk,\QkRandom\Sk}$ if the events 
\begin{eqnarray*}
J_k^Q &=&\left\lbrace\mbox{Given}\ \Qk\ \mbox{and}\ \xk, \Fok \mbox{ and } \Fsk \mbox{ are }\ef\mbox{-accurate estimates of } f(\xk)  \mbox{ and}\right.\\
& &\qquad\qquad\qquad\qquad\qquad\qquad\qquad\qquad \left.  f(\xk+\Qk\sk), \mbox{ respectively, for } \Dk \right\rbrace
\end{eqnarray*}
satisfy the supermartingale-like condition
$\prob{J^Q_k|\fmqalgebra}\geq \beta_f.$
\end{definition}

Based on Definitions~\ref{wellAlignedDefinition}, \ref{defProbQkfully}, and~\ref{defProbAccEstim}, {\it true} iterations, in other words, those for which $\iak\iik\ijk=1$, occur with a total probability of at least $\tilde{\beta}:=\beta_f\beta_m(1-\beta_Q)$ conditioned on $\falgebra$. Noting that by construction $\iak$ is both $\fqalgebra$-measurable and $\fmqalgebra$-measurable since $\fqalgebra\subset\fmqalgebra$, and that $\iik$ is $\fmqalgebra$-measurable, then for $F:= \accolade{\mathcal{A}_k\cap I^Q_k\cap J_k^Q}$, we have
\begin{equation}\label{prodBetaIneq}
\begin{split}
\pr(F|\falgebra)&=\E{\iak\iik\ijk|\falgebra}
=\E{\iak\iik\E{\ijk|\fmqalgebra} |\falgebra}\\
&\geq \beta_f\E{\iak\E{\iik|\fqalgebra} |\falgebra}\geq \beta_f\beta_m\E{\iak|\falgebra}\geq \tilde{\beta}.
\end{split}
\end{equation}
The second equality and the first two inequalities used the  {\it William's Tower Property} for conditional expectations~\cite[Theorem~34.4.]{billingsley1995ThirdEdition}, namely, the fact that if ${\ubar{z}}$ is integrable and the $\sigma$-algebras $\mathcal{G}_1$ and $\mathcal{G}_2$ satisfy $\mathcal{G}_1\subset\mathcal{G}_2$, then $\E{{\ubar{z}}|\mathcal{G}_1}=\E{\E{{\ubar{z}}|\mathcal{G}_2}|\mathcal{G}_1}$. Moreover, for a $\sigma$-algebra $\mathcal{G}$,  $\E{{\ubar{r}}{\ubar{u}}|\mathcal{G}}={\ubar{r}}\E{{\ubar{u}}|\mathcal{G}}$ if~${\ubar{r}}$ is $\mathcal{G}$-measurable and~${\ubar{r}}$ and~${\ubar{r}}{\ubar{u}}$ are integrable~\cite[Theorem~34.3.]{billingsley1995ThirdEdition}.

Note that even though the present algorithmic framework does not distinguish true iterations from {\it false} ones, it holds that $\prob{\underset{k\to\infty}{\limsup}\ \ubar{w}_k=\infty}=1$, where
\begin{equation}\label{randomWalk}
\ubar{w}_k:=\sum_{i=0}^k\left(2\mathds{1}_{\mathcal{A}_i}\mathds{1}_{I^Q_i}\mathds{1}_{J^Q_i}-1\right),
\end{equation}
as will be seen later in the proof of Theorem~\ref{liminfTheorem} provided $\tilde{\beta}>1/2$. This shows that true iterations occur sufficiently often for the convergence of Algorithm~\ref{algoStoScalTR} to hold (see, e.g.,~\cite[Theorem~3.6]{dzahini2020constrained} for a similar result).

\begin{remark}
Calculations similar to those in~\eqref{prodBetaIneq} easily yield 
\begin{eqnarray}\label{prodBetafBetam}
\prob{I^Q_k\cap J_k^Q|\fqalgebra}\geq \beta_f\beta_m, & & \prob{J_k^Q|\fqalgebra}\geq \beta_f,\\
\label{otherProbIneqs}
\prob{I^Q_k|\falgebra}\geq\beta_m & \qquad \mbox{ and } \qquad & \prob{J^Q_k|\falgebra}\geq\beta_f,
\end{eqnarray}
which will be useful in the proofs of Theorems~\ref{zerothOrderScal} and~\ref{liminfTheorem}, respectively. 
\end{remark}

\subsection{Construction of probabilistic estimates and models in random subspaces.}
The construction of probabilistic estimates of Definition~\ref{defProbAccEstim} trivially follows the strategies described in~\cite[Section~2.3]{audet2019stomads}, \cite[Section~5]{chen2018stochastic}, and~\cite[Section~5.1]{dzahini2020constrained} and hence are not presented here again.

Next is stated an extension to subspaces of~\cite[Lemma~9.4]{AuHa2017}, which will be useful for the proof of Theorem~\ref{subspaceModel1}, one of the main results of the present work. Its proof is presented in the Appendix.

\begin{lemma}\label{FundTheorCalc}
Let $f$ be differentiable with a $\Lg$-Lipschitz continuous gradient, and let $\s,\di\in\rp$. For a given matrix $\Qk$ and vector $\xk$, let $\hat{f}(\s)=f(\xk+\Qk\s)$, and assume that Assumption~\ref{QmaxAssumption} holds. Then 
\[\abs{\hat{f}(\s+\di)-\hat{f}(\s)-\di^{\top}\nabla \hat{f}(\s)}\leq \frac{1}{2}\Lg\Qmax^2\normii{\di}^2.\]
\end{lemma}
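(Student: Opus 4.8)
The plan is to reduce this subspace estimate to the classical ``descent lemma'' for functions with Lipschitz-continuous gradient, exploiting the affine structure of $\hat{f}$, and then to absorb the two extra copies of $\Qk$ that the chain rule introduces using the uniform bound of Assumption~\ref{QmaxAssumption}. First I would record that, since $\hat{f}(\s)=f(\xk+\Qk\s)$ is the composition of the differentiable function $f$ with the affine map $\s\mapsto\xk+\Qk\s$, the chain rule gives $\nabla\hat{f}(\s)=\Qk^{\top}\nabla f(\xk+\Qk\s)$; in particular $\hat{f}$ is continuously differentiable because $\nabla f$ is continuous.

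Next I would apply the fundamental theorem of calculus to the scalar function $g(t):=\hat{f}(\s+t\di)$ on $[0,1]$, whose derivative is $g'(t)=\di^{\top}\nabla\hat{f}(\s+t\di)$. This yields the integral identity
\[
\hat{f}(\s+\di)-\hat{f}(\s)-\di^{\top}\nabla\hat{f}(\s)=\int_0^1\di^{\top}\bigl(\nabla\hat{f}(\s+t\di)-\nabla\hat{f}(\s)\bigr)\,dt.
\]
Taking absolute values and applying the Cauchy--Schwarz inequality pointwise inside the integral bounds the left-hand side by $\int_0^1\normii{\di}\,\normii{\nabla\hat{f}(\s+t\di)-\nabla\hat{f}(\s)}\,dt$.

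The crux is then to estimate $\normii{\nabla\hat{f}(\s+t\di)-\nabla\hat{f}(\s)}$. Substituting the chain-rule expression, this difference equals $\Qk^{\top}\bigl(\nabla f(\xk+\Qk\s+t\Qk\di)-\nabla f(\xk+\Qk\s)\bigr)$. Using the consistency of the matrix norm with the Euclidean norm (as assumed throughout the manuscript), together with $\norme{\Qk^{\top}}=\norme{\Qk}$ for both the spectral and Frobenius norms, the $\Lg$-Lipschitz continuity of $\nabla f$, and finally Assumption~\ref{QmaxAssumption}, I would bound this quantity by
\[
\norme{\Qk}\,\Lg\,\normii{t\Qk\di}\le \Lg\,\norme{\Qk}^2\,t\,\normii{\di}\le \Lg\,\Qmax^2\,t\,\normii{\di}.
\]
Inserting this into the integral gives $\Lg\Qmax^2\normii{\di}^2\int_0^1 t\,dt=\tfrac12\Lg\Qmax^2\normii{\di}^2$, which is precisely the claimed bound. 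The only genuine subtlety is the bookkeeping of matrix norms---ensuring that the factor $\norme{\Qk^{\top}}$ coming from the chain rule is controlled by $\Qmax$---but this is immediate from the stated norm consistency together with Assumption~\ref{QmaxAssumption}; everything else is the standard Taylor-with-integral-remainder computation, so I do not expect any real obstacle.
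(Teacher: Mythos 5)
Your proof is correct and follows essentially the same route as the paper's: the fundamental theorem of calculus applied along the segment, Cauchy--Schwarz, the Lipschitz continuity of $\nabla f$, and Assumption~\ref{QmaxAssumption} to absorb the two factors of $\norme{\Qk}$. The only (cosmetic) difference is that the paper first changes variables to $\yb=\xk+\Qk\s$ and $\dik=\Qk\di$ so that both copies of $\Qmax$ come from $\normii{\dik}^2\le\Qmax^2\normii{\di}^2$, whereas you keep the computation in the subspace and collect one factor from $\norme{\Qk^{\top}}$ and one from $\normii{\Qk\di}$; both yield the identical bound.
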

The next result, which is a stochastic variant of~\cite[Theorem~9.5]{AuHa2017}, shows, together with Corollary~\ref{CorQkFully}, how $\beta_m$-probabilistically $(\kef,\keg; \Qk)$-fully linear models of Definition~\ref{defProbQkfully} can be made available. 
\begin{theorem}\label{subspaceModel1} 
Let the assumptions of Lemma~\ref{FundTheorCalc} hold.
Assume that there exists a finite constant $V_f>0$ such that 
$\V_{\ubar{\theta}}[f_{\ubar{\theta}}(\x)]\leq V_f$ for all $\x\in\rn$. For all $i=0,1,\dots, p$, let ${\ubar{\theta}}^i_\ell, \ell=1,\dots,\pi_k$ be independent random samples of the independent random variables ${\ubar{\theta}}^i$ following the same distribution as ${\ubar{\theta}}$, and define $\Fsk(\sii,{\ubar{\theta}}^i):=\frac{1}{\pi_k}\sum_{\ell=1}^{\pi_k} f_{{\ubar{\theta}}^i_\ell}(\xk+\Qk\sii)$, where the points  $\SSk:=\accolade{\szero,\sone,\dots,\spi}\subseteq\rp\cap\mathcal{B}(\bm{0},c_1\dk)$ are affinely independent with $\szero=\bm{0}$ and $c_1\geq1$ a~constant. Let $\rho_k:=\underset{\sii \in \SSk}{\max}\normii{\sii-\szero}$ denote the approximate diameter of $\SSk$ and
define $\Lkhat:=\frac{1}{\rho_k}\left[\sone-\szero\quad  \cdots\quad \spi-\szero \right]\in\rpp$. Let $\bm{1}:=[1,\dots,1]^{\top}\in\R^{p+1}$, and define $\bm{Y}:=\left[\szero\ \, \sone\ \cdots\ \spi\right]\in\R^{p\times (p+1)}$. Consider the random vector  $\ubar{f}_k(\SSk,{\ubar{\theta}}):=\left[\Fsk(\szero,{\ubar{\theta}}^0), \Fsk(\sone,{\ubar{\theta}}^1),\dots, \Fsk(\spi,{\ubar{\theta}}^p)\right]^{\top}$ $\in\R^{p+1}$ and the random linear model $\Mhatk(\s):=\azeroRandomVar+\aRandomVec^{\top}\s$, where $(\azeroRandomVar,\aRandomVec)\in\R\times\rp$ is the unique solution of $\left[\bm{1}\quad \bm{Y}^{\top}\right]\comb{\azeroRandomVar}{\aRandomVec}=\ubar{f}_k(\SSk,{\ubar{\theta}})$. 
Defining\footnote{While $\kef$ and $\keg$ seem to always depend on $k$, this is not the case since $\norme{\Lkhat^{-1}}$ can be controlled by the geometry of the set $\SSk$, as will be seen later by means of Corollary~\ref{CorQkFully}.}
\begin{equation}\label{kefkeg}
\kef:= \frac{1}{2}\left(1+\frac{1}{2}\textcolor{black}{c_1^2\delta_{\max}}+2\textcolor{black}{c_1}\sqrt{p}\norme{\Lkhat^{-1}}\right)\Lg\Qmax^2,\quad \quad 
\keg:= \left(1+\textcolor{black}{c_1}\sqrt{p}\norme{\Lkhat^{-1}}\right)\Lg\Qmax^2,
\end{equation}
consider the events 
\begin{equation*}
E_f:= \accolade{\abs{\hat{f}(\s)-\Mhatk(\s)}\leq \kef\dk^2}, \qquad  E_g:=\accolade{\normii{\nabla \hat{f}(\s)-\nabla\Mhatk(\s)}\leq \keg\dk}.
\end{equation*}
Then 
\begin{equation}\label{gradientBoundQlinear}
\pr\left[E_f\cap E_g\right]\geq \beta_m \quad \mbox{for some}\ \beta_m\in(0,1)\ \mbox{ and all }\ \normii{\s}\leq \dk,
\end{equation}
provided 
\begin{equation}\label{ppkChoice}
\pi_k\geq \frac{16V_f}{\textcolor{black}{c_1^2}\Lg^2\Qmax^4\rho_k^2\min\{\dk^2,\dk^4\}\left(1-\beta_m^{1/(p+1)}\right)}.
\end{equation}
\end{theorem}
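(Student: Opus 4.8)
The plan is to reduce the probabilistic statement~\eqref{gradientBoundQlinear} to a single \emph{deterministic} event about the individual sampling errors $e_i := \Fsk(\sii,{\ubar{\theta}}^i)-\hat{f}(\sii)$, $i=0,1,\dots,p$, and then to control that event by a concentration argument. Concretely, I would first produce a threshold $t>0$ and show that on the deterministic event $\{\max_{0\leq i\leq p}\abs{e_i}\leq t\}$ both $E_f$ and $E_g$ hold with the constants of~\eqref{kefkeg}; then I would bound the probability of this event from below by $\beta_m$, which is exactly what forces the sample-size requirement~\eqref{ppkChoice}. Throughout, the subspace Taylor estimate of Lemma~\ref{FundTheorCalc} and the chain rule $\nabla\hat{f}(\s)=\Qk^{\top}\nabla f(\xk+\Qk\s)$ are the two structural facts that bring in the $\Qmax$ factors.

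For the deterministic step I would use the interpolation identities $\Mhatk(\sii)=\Fsk(\sii,{\ubar{\theta}}^i)=\hat{f}(\sii)+e_i$. Since $\szero=\bm{0}$ we have $\azeroRandomVar=\hat{f}(\bm{0})+e_0$; subtracting the $i=0$ identity from the others and expanding $\hat{f}(\sii)-\hat{f}(\bm{0})$ via Lemma~\ref{FundTheorCalc} gives, for $i\geq 1$, the relation $(\aRandomVec-\nabla\hat{f}(\bm{0}))^{\top}\sii = e_i-e_0+R_i$ with $\abs{R_i}\leq \tfrac12\Lg\Qmax^2\rho_k^2$. Stacking these $p$ scalar equations and using $[\sone\ \cdots\ \spi]=\rho_k\Lkhat$ lets me invert through $\norme{\Lkhat^{-1}}$ to obtain $\normii{\aRandomVec-\nabla\hat{f}(\bm{0})}\leq \tfrac{\sqrt{p}\,\norme{\Lkhat^{-1}}}{\rho_k}\bigl(\tfrac12\Lg\Qmax^2\rho_k^2+2t\bigr)$ whenever $\max_i\abs{e_i}\leq t$. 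For a general $\s$ with $\normii{\s}\leq\dk$ I would then add the Lipschitz term $\normii{\nabla\hat{f}(\s)-\nabla\hat{f}(\bm{0})}\leq\Lg\Qmax^2\dk$ to recover the gradient bound $\keg\dk$, and for $E_f$ I would combine the same gradient estimate with the second-order remainder of Lemma~\ref{FundTheorCalc} and the constant-term error $\abs{e_0}$ to recover $\abs{\hat{f}(\s)-\Mhatk(\s)}\leq\kef\dk^2$.

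The choice that makes these constants match~\eqref{kefkeg} exactly is $t=\tfrac14 c_1\Lg\Qmax^2\rho_k\min\{\dk,\dk^2\}$: the factor $\min\{\dk,\dk^2\}$ is precisely what simultaneously fits the sampling error into the $O(\dk)$ budget of $\keg$ and the $O(\dk^2)$ budget of $\kef$, using $\rho_k\leq c_1\dk$ to handle the gradient contribution and the cruder $\rho_k\leq c_1\dmaxx$ to handle the $e_0$ contribution across the two regimes $\dk\leq 1$ and $\dk>1$. Finally, the $p+1$ estimates are built from mutually independent samples, each $\Fsk(\sii,{\ubar{\theta}}^i)$ being an average of $\pi_k$ i.i.d.\ draws with mean $\hat{f}(\sii)$ and variance at most $V_f/\pi_k$; Chebyshev gives $\prob{\abs{e_i}>t}\leq V_f/(\pi_k t^2)$, and independence yields $\prob{\max_i\abs{e_i}\leq t}\geq\bigl(1-V_f/(\pi_k t^2)\bigr)^{p+1}$. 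Requiring this to be at least $\beta_m$ is equivalent to $1-V_f/(\pi_k t^2)\geq\beta_m^{1/(p+1)}$, which, after substituting the value of $t^2$, rearranges into precisely~\eqref{ppkChoice}.

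The main obstacle here is bookkeeping rather than conceptual: because the two error budgets carry different powers of $\dk$ (quadratic for values, linear for gradients), a single threshold $t$ must be engineered to meet both at once, and it is this constraint that dictates the appearance of $\min\{\dk,\dk^2\}$, hence $\min\{\dk^2,\dk^4\}$ in~\eqref{ppkChoice}, and that forces the use of $\rho_k\leq c_1\dmaxx$ for the $e_0$ term. Secondary care is needed to propagate the subspace factors $\Qmax$ consistently through both Lemma~\ref{FundTheorCalc} and the chain rule, and to keep the poisedness constant $\norme{\Lkhat^{-1}}$ cleanly factored out of the inversion so that $\kef$ and $\keg$ are genuinely independent of $k$ once the geometry of $\SSk$ is controlled, as claimed in the footnote to~\eqref{kefkeg}.
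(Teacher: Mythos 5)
Your proposal is correct and follows essentially the same route as the paper's proof: the deterministic event $\{\max_i\abs{e_i}\leq t\}$ with $t=\tfrac14 c_1\Lg\Qmax^2\rho_k\min\{\dk,\dk^2\}$ is exactly the paper's intersection $B=\bigcap_i B_i$ with threshold $\tfrac12\kappa_{eg}'\rho_k\min\{\dk,\dk^2\}$, the inversion through $\norme{\Lkhat^{-1}}$ and the subsequent bounds reproduce the paper's Parts~1 and~2 (including the use of $\rho_k\leq c_1\dmaxx$ for the $e_0$ term), and the Chebyshev-plus-independence step is the paper's Part~3. No substantive differences.
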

\begin{proof}
We build the random linear model $\Mhatk(\s)=\azeroRandomVar+\aRandomVec^{\top}\s$ by seeking values for $({\azeroRandomVar},\aRandomVec)\in\R\times \R^{p}$ such that 
$\Mhatk(\sii)=\Fsk(\sii,{\ubar{\theta}}^i)$  for all $\sii$ in the interpolation set $\SSk$,
which is equivalent to the $(p+1)\times(p+1)$ linear system of equations
\begin{equation}\label{systemTwo}
\left[\bm{1}\quad \bm{Y}^{\top}\right]\comb{\azeroRandomVar}{\aRandomVec}=\ubar{f}_k(\SSk,{\ubar{\theta}}).
\end{equation}
Since the points in $\SSk$ are affinely independent, the matrix $\Lkhat$ is invertible (see, e.g., \cite[Proposition~9.1]{AuHa2017}). Hence, system~\eqref{systemTwo} has a unique solution. Moreover, $\aRandomVec=\nabla\Mhatk(\s)$ can also be computed by solving the $p\times p$ linear system 
\begin{equation}\label{systemThree}
\rho_k \Lkhat^{\top}\aRandomVec=\bm{{\ubar{\delta}}^{f_k(\SSk,{\ubar{\theta}})}}, \quad\mbox{where}\quad 
\bm{{\ubar{\delta}}^{f_k(\SSk,{\ubar{\theta}})}}={\begin{bmatrix}
		{\Fsk(\sone,{\ubar{\theta}}^1)-\Fsk(\szero,{\ubar{\theta}}^0)}\\
		{\Fsk(\stwo,{\ubar{\theta}}^2)-\Fsk(\szero,{\ubar{\theta}}^0)}\\
		{\vdots}\\
		{\Fsk(\spi,{\ubar{\theta}}^p)-\Fsk(\szero,{\ubar{\theta}}^0)}
\end{bmatrix}}\in \rp.
\end{equation}

To prove~\eqref{gradientBoundQlinear}, we consider the events
\[B_i:=\accolade{\abs{\Fsk(\sii,{\ubar{\theta}}^i)-\hat{f}(\sii)}\leq \frac{1}{2}\kappa_{eg}'\rho_k\min\{\dk,\dk^2\}},\quad i=0,1,\dots,p,\]
where $\kappa_{eg}':=\frac{1}{2}\textcolor{black}{c_1}\Lg\Qmax^2$; and we assume that 
\begin{equation}\label{BiProb}
\prob{B_i}\geq \beta_m^{1/(p+1)}\quad\mbox{for some }\ \beta_m\in (0,1)\  \mbox{and all}\ i=0,1,\dots,p.
\end{equation}
It follows from the independence of the random variables $\Fsk(\sii,{\ubar{\theta}}^i),\ i=0,1,\dots,p$, that the event $B:=\bigcap_{i=0}^pB_i$ satisfies $\prob{B}=\prod_{i=0}^{p}\prob{B_i}\geq \beta_m$. Then the remainder of the proof considers three  parts. The first two show respectively that $B\subseteq E_g$ and $B\subseteq E_f$, which imply that $B\subseteq E_f\cap E_g$ and hence $\prob{E_f\cap E_g}\geq \beta_m$. Part~3 provides the condition under which~\eqref{BiProb} holds.\\
$ $\\
\textbf{Part~1 ($B\subseteq E_g$)}. To demonstrate the latter inclusion, we will show that 
\begin{equation}\label{FirstEvent}
B\subseteq E_1:= \accolade{\normii{\Lkhat^{\top}\left[\aRandomVec-\nabla \hat{f}(\szero)\right]}\leq 2\sqrt{p}\kappa_{eg}'\dk}.
\end{equation}

Then, since the matrix norm is consistent with the Euclidean norm, the inequality \[\normii{\Lkhat^{-\top}\Lkhat\left(\aRandomVec-\nabla \hat{f}(\szero)\right)}\leq \norme{\Lkhat^{-\top}}\normii{\Lkhat\left(\aRandomVec-\nabla \hat{f}(\szero)\right)} \]
implies that $E_1\subseteq E_2:=\accolade{\normii{\aRandomVec-\nabla \hat{f}(\szero)}\leq 2\sqrt{p}\kappa_{eg}'\norme{\Lkhat^{-1}}\dk}$, where we used the fact that $\norme{\Lkhat^{-\top}}=\norme{\Lkhat^{-1}}$. Using the $\Lg$-Lipschitz continuity of $\nabla f$ and the fact that $\nabla \hat{f}(\s)=\Qk^{\top}\nabla f(\xk+\Qk\s)$ with $\norme{\Qk}\leq \Qmax$, we get 
\begin{eqnarray*}
\normii{\nabla \hat{f}(\s)-\aRandomVec}&\leq&\normii{\aRandomVec-\nabla \hat{f}(\szero)}+\normii{\nabla \hat{f}(\szero)-\nabla \hat{f}(\s)}\\
&\leq& 2\sqrt{p}\kappa_{eg}'\norme{\Lkhat^{-1}}\dk + \Lg\norme{\Qk^{\top}}\norme{\Qk}\normii{\szero-\s}\\
&\leq& \left(2\sqrt{p}\kappa_{eg}'\norme{\Lkhat^{-1}}+\Lg\Qmax^2\right)\dk=\left(1+\textcolor{black}{c_1}\sqrt{p}\norme{\Lkhat^{-1}}\right)\Lg\Qmax^2\dk,
\end{eqnarray*}
which implies that $E_2\subseteq E_g:=\accolade{\normii{\nabla \hat{f}(\s)-\aRandomVec}\leq\left(1+\textcolor{black}{c_1}\sqrt{p}\norme{\Lkhat^{-1}}\right)\Lg\Qmax^2\dk}$.

To show~\eqref{FirstEvent}, we first notice, using~\eqref{systemThree}, that $\Lkhat^{\top}\aRandomVec=\frac{1}{\rho_k}\bm{{\ubar{\delta}}^{f_k(\SSk,{\ubar{\theta}})}}$, and then 
\begin{equation*}
\Lkhat^{\top}\nabla \hat{f}(\szero)=\frac{1}{\rho_k}\left[(\sone-\szero)^{\top}\nabla \hat{f}(\szero),\; \dots, \;  (\spi-\szero)^{\top}\nabla \hat{f}(\szero)\right]^{\top}\in\rp.
\end{equation*}
Thus, the $i$th component of the vector $\Lkhat^{\top}\left(\aRandomVec-\nabla \hat{f}(\szero)\right)$ is given by 
\begin{equation*}
\ubar{\Psi}_k(\sii,\szero):=\frac{1}{\rho_k}\left(\Fsk(\sii,{\ubar{\theta}}^i)-\Fsk(\szero,{\ubar{\theta}}^0)-(\sii-\szero)^{\top}\nabla \hat{f}(\szero)\right).
\end{equation*}
We notice that 
$
\abs{{\ubar{\Psi}}_k(\sii,\szero)}
\leq {{\ubar{\Psi}}_k^{1,i}} + {\psi_k^{2,i}},
$
where 
\begin{eqnarray*}
{{\ubar{\Psi}}_k^{1,i}}&:=&\frac{1}{\rho_k}\abs{\Fsk(\sii,{\ubar{\theta}}^i)-\hat{f}(\sii)}+\frac{1}{\rho_k}\abs{\Fsk(\szero,{\ubar{\theta}}^0)-\hat{f}(\szero)}\\
\mbox{and }\quad {{{\psi}}_k^{2,i}}&:=&\frac{1}{\rho_k} \abs{\hat{f}(\sii)-\hat{f}(\szero)-(\sii-\szero)^{\top}\nabla \hat{f}(\szero)}.
\end{eqnarray*} 
It follows from Lemma~\ref{FundTheorCalc} that 
\begin{equation}\label{psiktwoIneq}
{{{\psi}}_k^{2,i}}\leq\frac{1}{2\rho_k}\Lg\Qmax^2\normii{\sii-\szero}^2 \leq\frac{1}{2}\Lg\Qmax^2\rho_k\leq \frac{1}{2}\Lg\Qmax^2\textcolor{black}{c_1}\dk.
\end{equation}
Now assume that the event $B$ occurs. Then ${\Psi_k^{1,i}}\leq \kappa_{eg}'\dk$ for all $i=0,1,\dots,p$. It follows from~\eqref{psiktwoIneq} and the inequality $\abs{{\ubar{\Psi}}_k(\sii,\szero)}^2\leq 2\left(\left({\ubar{\Psi}}_k^{1,i}\right)^2+ \left({{\psi}}_k^{2,i}\right)^2\right)$ that

\begin{equation*}
\begin{split}
\normii{\Lkhat^{\top}\left(\aRandomVec-\nabla \hat{f}(\szero)\right)}^2&= \sum_{i=1}^p{\ubar{\Psi}}_k(\sii,\szero)^2\\
&\leq 2p\kegPrimeSquare\dk^2+\frac{1}{2}p\Lg^2\Qmax^4\textcolor{black}{c_1^2}\dk^2=p\Lg^2\Qmax^4\textcolor{black}{c_1^2}\dk^2,
\end{split}
\end{equation*}
where the last equality follows from the definition of $\kappa_{eg}'$. This means that $B\subseteq E_1$.\\
$ $\\

\textbf{Part~2 ($B\subseteq E_f$)}. To show the latter inclusion, we recall that $\Mhatk(\szero)=\Fsk(\szero,{\ubar{\theta}}^0)$ and $\Mhatk(\szero)-\Mhatk(\s)=(\szero-\s)^{\top}\aRandomVec$. Then the following holds:
\begin{eqnarray*}
\abs{\hat{f}(\s)-\Mhatk(\s)}&=&\abs{\hat{f}(\s)-\Fsk(\szero,{\ubar{\theta}}^0)+\Mhatk(\szero)-\Mhatk(\s)}\\
&=&\abs{\hat{f}(\s)-\Fsk(\szero,{\ubar{\theta}}^0)+(\szero-\s)^{\top}\aRandomVec}\\
&=&\left\lvert\hat{f}(\s)-\hat{f}(\szero)-(\s-\szero)^{\top}\nabla\hat{f}(\szero)-\left(\Fsk(\szero,{\ubar{\theta}}^0)-\hat{f}(\szero)\right) +(\s-\szero)^{\top}\left(\nabla\hat{f}(\szero)-\aRandomVec\right)\right\rvert\\
&\leq& \abs{\hat{f}(\s)-\hat{f}(\szero)-(\s-\szero)^{\top}\nabla\hat{f}(\szero)}+ \abs{\Fsk(\szero,{\ubar{\theta}}^0)-\hat{f}(\szero)} + \normii{\s-\szero}\normii{\aRandomVec-\nabla\hat{f}(\szero)}.
\end{eqnarray*}
As before, the first term on the right-hand side of the last inequality is bounded by using Lemma~\ref{FundTheorCalc} as follows:
\begin{equation}\label{lemmabound2}
\abs{\hat{f}(\s)-\hat{f}(\szero)-(\s-\szero)^{\top}\nabla\hat{f}(\szero)}\leq \frac{1}{2}\Lg\Qmax^2\normii{\s-\szero}^2\leq\frac{1}{2}\Lg\Qmax^2\dk^2.
\end{equation}
To bound the last two terms, we note that the inclusions $B\subseteq B_0$ and $B\subseteq E_1\subseteq E_2$ yield  $\abs{\Fsk(\szero,{\ubar{\theta}}^0)-\hat{f}(\szero)}$ $\leq \frac{1}{2}\kappa_{eg}'\rho_k\dk^2\ $ and $\ \normii{\aRandomVec-\nabla \hat{f}(\szero)}\leq 2\sqrt{p}\kappa_{eg}'\norme{\Lkhat^{-1}}\dk$, respectively. Thus, these inequalities combined with~\eqref{lemmabound2} and the inequalities $\rho_k\leq c_1\delta_{\max}$ and $\normii{\s-\szero}\leq \dk$ lead to

\begin{eqnarray*}
\abs{\hat{f}(\s)-\Mhatk(\s)}&\leq&\frac{1}{2}\Lg\Qmax^2\dk^2+\frac{1}{2}\kappa_{eg}'\rho_k\dk^2+2\sqrt{p}\kappa_{eg}'\norme{\Lkhat^{-1}}\dk^2\\
&=&\frac{1}{2}\left(1+\frac{1}{2}\textcolor{black}{c_1^2\delta_{\max}}+2\textcolor{black}{c_1}\sqrt{p}\norme{\Lkhat^{-1}}\right)\Lg\Qmax^2\dk^2,
\end{eqnarray*}
which shows that $B\subseteq E_f$.\\
$ $\\
\textbf{Part~3$\ $} To complete the proof, we show~\eqref{BiProb} using the Chebyshev inequality as follows:
\begin{eqnarray*}
\prob{B_i^c}&=&\prob{\abs{\Fsk(\sii,{\ubar{\theta}}^i)-\Esp\left[\Fsk(\sii,{\ubar{\theta}}^i)\right]}> \frac{1}{2}\kappa_{eg}'\rho_k\min\{\dk,\dk^2\}}\\
&\leq& \frac{4 \V\left[\Fsk(\sii,{\ubar{\theta}}^i)\right]}{\kegPrimeSquare\rho_k^2\min\{\dk^2,\dk^4\}}\leq \frac{4 V_f}{\pi_k\kegPrimeSquare\rho_k^2\min\{\dk^2,\dk^4\}}\leq 1-\beta_m^{1/(p+1)},
\end{eqnarray*}
provided~\eqref{ppkChoice} holds; that is, $\pi_k$ is chosen according to
\[\pi_k\geq \frac{4 V_f}{\kegPrimeSquare\rho_k^2\min\{\dk^2,\dk^4\}\left(1-\beta_m^{1/(p+1)}\right)}= \frac{16V_f}{\textcolor{black}{c_1^2}\Lg^2\Qmax^4\rho_k^2\min(\dk^2,\dk^4)\left(1-\beta_m^{1/(p+1)}\right)}.\]
\end{proof}
The next result shows that for a particular geometry of the interpolation set $\SSk$ of Theorem~\ref{subspaceModel1}, the resulting model gradient is a forward finite-difference stochastic gradient estimator of $\Qk^{\top}\nabla f(\xk)$ and its corresponding parameters $\kef$ and $\keg$ do not depend on $k$, as mentioned above.
\begin{corollary}\label{CorQkFully}
Under all the assumptions of Theorem~\ref{subspaceModel1}, assume further that $\sii=h\bm{e_i}\in\rp, i\in \intbracket{1,p}$, where $\bm{e_i}$ is the $i$th standard basis vector of $\rp$ and $h=\min\{\hopt, \dk\}$ for some $\hopt>0$ sufficiently small. Let $\Fok(\xk):=\Fsk(0,{\ubar{\theta}}^0)$ and $\Fsk(\xk+h(\Qk)_{: i}):=\Fsk(\sii,{\ubar{\theta}}^i)$ for all $i\in \intbracket{1,p}$, where $(\Qk)_{: i}$ denotes the $i$th column of $\Qk$. Then $\Qk^{\top}\nabla f(\xk)\approx \gkhat=\Gkhat(\omega)$, where $\Gkhat$ is defined by the  forward finite-difference scheme
$(\Gkhat)_i=\frac{\Fsk(\xk+h(\Qk)_{:i})-\Fok(\xk)}{h}$, $i\in \intbracket{1,p}$.
Moreover, the corresponding model parameters~$\kef$ and~$\keg$ given by~\eqref{kefkeg} are independent of $k$.
\end{corollary}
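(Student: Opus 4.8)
The plan is to exploit the coordinate-aligned geometry of $\SSk$ to show that the interpolation matrix $\Lkhat$ of Theorem~\ref{subspaceModel1} collapses to the identity, after which both claims follow by direct substitution into that theorem's conclusions.

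First I would evaluate the two geometric quantities entering Theorem~\ref{subspaceModel1} under the choice $\sii=h\bm{e_i}$, $\szero=\bm{0}$. Since $\normii{\sii-\szero}=h\normii{\bm{e_i}}=h$ for every $i\in\intbracket{1,p}$, the approximate diameter is $\rho_k=h$, and consequently $\Lkhat=\frac{1}{h}\left[\sone-\szero\ \cdots\ \spi-\szero\right]=\left[\bm{e_1}\ \cdots\ \bm{e_p}\right]=\bm{I_p}$, the $p\times p$ identity. In particular the $h\bm{e_i}$ are affinely independent and lie in $\mathcal{B}(\bm{0},c_1\dk)$ because $h\leq\dk\leq c_1\dk$ (recall $c_1\geq 1$), so the hypotheses of Theorem~\ref{subspaceModel1} are indeed met.

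Next I would read off the model gradient from the linear system~\eqref{systemThree}, which now reads $h\,\aRandomVec=\bm{{\ubar{\delta}}^{f_k(\SSk,{\ubar{\theta}})}}$ since $\Lkhat^{\top}=\bm{I_p}$ and $\rho_k=h$. Taking the $i$th component and using $\Qk\sii=h(\Qk)_{:i}$ together with the definitions $\Fok(\xk)=\Fsk(\szero,{\ubar{\theta}}^0)$ and $\Fsk(\xk+h(\Qk)_{:i})=\Fsk(\sii,{\ubar{\theta}}^i)$ yields exactly $(\aRandomVec)_i=\big(\Fsk(\xk+h(\Qk)_{:i})-\Fok(\xk)\big)/h$. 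Since $\aRandomVec=\nabla\Mhatk=\gkhat$ is a realization of $\Gkhat$, this identifies $\gkhat$ with the claimed forward finite-difference scheme. The approximation $\Qk^{\top}\nabla f(\xk)\approx\gkhat$ is then immediate from Theorem~\ref{subspaceModel1}: on the event $E_g$, evaluating the gradient bound at $\s=\bm{0}$ and using $\nabla\hat{f}(\bm{0})=\Qk^{\top}\nabla f(\xk)$ gives $\normii{\Qk^{\top}\nabla f(\xk)-\gkhat}\leq\keg\dk$.

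Finally, for the independence of the model parameters I would simply note that $\Lkhat=\bm{I_p}$ for every $k$, so $\norme{\Lkhat^{-1}}=\norme{\bm{I_p}}$ is a fixed constant; substituting this into the expressions~\eqref{kefkeg} for $\kef$ and $\keg$ leaves only the $k$-independent quantities $c_1$, $\delta_{\max}$, $p$, $\Lg$, and $\Qmax$, as announced in the footnote to Theorem~\ref{subspaceModel1}. I do not expect a genuine obstacle here: the whole argument rests on the single observation $\Lkhat=\bm{I_p}$, and the only points requiring care are verifying the geometric hypotheses (affine independence and containment in $\mathcal{B}(\bm{0},c_1\dk)$, for which the bound $h\leq\dk$ is enough) and correctly matching the interpolation points $\xk+\Qk\sii$ to the finite-difference nodes $\xk+h(\Qk)_{:i}$.
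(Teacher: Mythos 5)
Your proposal is correct and follows exactly the paper's own argument: observe that $\rho_k=h$ and $\Lkhat=\bm{I_p}$, read off the forward finite-difference gradient from the linear system~\eqref{systemThree}, and conclude that $\kef$ and $\keg$ are $k$-independent because $\norme{\Lkhat^{-1}}$ is constant. You merely spell out details (affine independence, containment in $\mathcal{B}(\bm{0},c_1\dk)$, and the role of the event $E_g$) that the paper leaves implicit.
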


\begin{proof}
Note that $\SSk=\accolade{\bm{0},h\bm{e_1},h\bm{e_2},\dots,h\bm{e_p}}\subseteq\rp\cap\B{0}{c_1\dk}$, $\rho_k=h$, and hence $\Lkhat=\bm{I_p}$. The proof immediately follows from~\eqref{systemThree} by replacing $\ubar{\bm{a}}$ with $\Gkhat$. The nondependence of~$\kef$ and~$\keg$ on $k$ trivially follows from the fact that $\Lkhat^{-1}=\bm{I_p}$.
\end{proof}

\section{Convergence analysis.}\label{Section4}
This section presents convergence results of Algorithm~\ref{algoStoScalTR} using ideas inspired by~\cite{audet2019stomads,billingsley1995ThirdEdition,chen2018stochastic,dzahini2020expected,dzahini2020constrained}. Section~\ref{Section4p1} presents preliminary results necessary for the proof of the main results. Section~\ref{Section4p2} proves that the sequence of random trust-region radii converges to zero almost surely. Section~\ref{Section4p3} demonstrates the existence of a subsequence of random iterates generated by the proposed method, which drives the norm of $\nabla f$ to zero almost surely. Proofs that are not presented in this section can be found in the Appendix.

\subsection{Preliminary results.}\label{Section4p1}
In the remainder of the manuscript, the following inspired by~\cite[Assumptions~4.1 and~4.3]{chen2018stochastic}, respectively, will be assumed.
\begin{assumption}\label{fGradfAssumption}
For given $\bm{\x_0}$, $\dmaxx$, and $\bm{Q_0}$, let $\mathscr{L}(\bm{\x_0};\bm{Q_0})\subset\rn$ be the set containing all iterates of Algorithm~\ref{algoStoScalTR}. Define the region $\mathscr{L}^{\star}(\bm{\x_0};\bm{Q_0})$ considered by the algorithm realizations as $\mathscr{L}^{\star}(\bm{\x_0};\bm{Q_0})=\ds{\underset{\accolade{k\, \geq\, 0,\, \xk\in\mathscr{L}(\bm{\x_0};\bm{Q_0})}}{\bigcup}}\mathcal{B}(\xk,\dmaxx;\Qk)$. Then $f(\x)\geq f_{\min}$ for all $\x\in\rn$ and for some constant $f_{\min}>-\infty$. Moreover, $f$ and its gradient are Lipschitz continuous on $\mathscr{L}^{\star}(\bm{\x_0};\bm{Q_0})$.
\end{assumption}

\begin{assumption}\label{boundedHessian}
There exists some $\kappa_h\geq 1$ such that for all $k\geq 0$, the Hessian $\Hkhat$ of all realizations of $\Mhatk$ satisfies $\norme{\Hkhat}\leq \kappa_h.$
\end{assumption}
The following subspace variant of~\cite[Lemma~4.5]{chen2018stochastic} shows that if $\dk$ is small enough compared with the size $\normii{\gkhat}$ of a $(\kef,\keg;\Qk)$-fully linear model gradient, then the trial step $\sk$ provides a decrease in $f$ proportional to $\normii{\gkhat}$.
\begin{lemma}\label{LemmaA}
Suppose that the function $\mhatk$ is a $(\kef,\keg;\Qk)$-fully linear model for~$f$ in $\B{\xk}{\dk;\Qk}$. Then the trial step $\sk$ leads to an improvement in $f$ such that
\begin{equation}\label{LemmaAimprovement}
f(\xk+\Qk\sk)-f(\xk)\leq -\frac{\kappa_{fcd}}{4}\normii{\gkhat}\dk,
\quad
\mbox{if }
\dk\leq \min\accolade{\frac{1}{\kappa_h},\frac{\kappa_{fcd}}{8\kappa_{ef}}}\normii{\gkhat}.
\end{equation}
\end{lemma}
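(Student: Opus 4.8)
The plan is to follow the classical trust-region descent argument, adapted to the subspace setting, by inserting the model values at $\s=\bm{0}$ and $\s=\sk$ between $f(\xk)$ and $f(\xk+\Qk\sk)$. First I would write the exact decrease as the telescoping sum
\[
f(\xk+\Qk\sk)-f(\xk)=\bigl[f(\xk+\Qk\sk)-\mhatk(\sk)\bigr]+\bigl[\mhatk(\sk)-\mhatk(\bm{0})\bigr]+\bigl[\mhatk(\bm{0})-f(\xk)\bigr].
\]
The two outer brackets are model-error terms, to be controlled by the $(\kef,\keg;\Qk)$-fully linear property, while the middle bracket is a predicted decrease, to be handled by the fraction-of-Cauchy-decrease condition~\eqref{CauchyDecrease}.

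Next I would bound each piece. Since $\normii{\sk}\leq\dk$ by construction in Step~[2] and $\normii{\bm{0}}=0\leq\dk$, both points lie in $\B{\xk}{\dk;\Qk}$, so the function-value part of~\eqref{Eq36} gives $\abs{f(\xk+\Qk\sk)-\mhatk(\sk)}\leq\kef\dk^2$ and $\abs{\mhatk(\bm{0})-f(\xk)}\leq\kef\dk^2$, the latter because $\mhatk(\bm{0})$ approximates $f(\xk+\Qk\bm{0})=f(\xk)$. For the middle bracket,~\eqref{CauchyDecrease} yields $\mhatk(\sk)-\mhatk(\bm{0})\leq-\frac{\kappa_{fcd}}{2}\normii{\gkhat}\min\accolade{\dk,\frac{\normii{\gkhat}}{\max(\norme{\Hkhat},1)}}$. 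Summing the three bounds produces
\[
f(\xk+\Qk\sk)-f(\xk)\leq 2\kef\dk^2-\frac{\kappa_{fcd}}{2}\normii{\gkhat}\min\accolade{\dk,\frac{\normii{\gkhat}}{\max(\norme{\Hkhat},1)}}.
\]

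The remaining work is to discharge the two hypotheses on $\dk$. The first, $\dk\leq\normii{\gkhat}/\kappa_h$, combined with Assumption~\ref{boundedHessian}---which gives $\max(\norme{\Hkhat},1)\leq\kappa_h$ precisely because $\kappa_h\geq1$---forces $\frac{\normii{\gkhat}}{\max(\norme{\Hkhat},1)}\geq\frac{\normii{\gkhat}}{\kappa_h}\geq\dk$, so the minimum collapses to $\dk$ and the predicted-decrease term becomes $-\frac{\kappa_{fcd}}{2}\normii{\gkhat}\dk$. The second hypothesis, $\dk\leq\frac{\kappa_{fcd}}{8\kef}\normii{\gkhat}$, gives $2\kef\dk^2\leq\frac{\kappa_{fcd}}{4}\normii{\gkhat}\dk$, which absorbs the error term into half of the predicted decrease; subtracting then leaves exactly $-\frac{\kappa_{fcd}}{4}\normii{\gkhat}\dk$, as claimed.

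I do not expect a serious obstacle, since this is essentially the deterministic full-space lemma with $\Qk\s$ in place of the step. The only points requiring care are the bookkeeping that makes the minimum in the Cauchy bound equal $\dk$ rather than the curvature-limited quantity (this is exactly where $\kappa_h\geq1$ enters, so that $\max(\norme{\Hkhat},1)$ may be bounded by $\kappa_h$) and the verification that both evaluation points $\bm{0}$ and $\sk$ lie in $\B{\xk}{\dk;\Qk}$ so that the subspace fully linear estimates~\eqref{Eq36} legitimately apply at those points.
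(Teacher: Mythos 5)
Your proposal is correct and follows essentially the same route as the paper's own proof: both combine the two fully linear error bounds (totaling $2\kef\dk^2$) with the Cauchy decrease condition~\eqref{CauchyDecrease}, use $\kappa_h\geq\max\{\norme{\Hkhat},1\}$ together with $\dk\leq\normii{\gkhat}/\kappa_h$ to reduce the minimum to $\dk$, and then absorb the error term via $\dk\leq\frac{\kappa_{fcd}}{8\kef}\normii{\gkhat}$. Your write-up merely makes the telescoping decomposition explicit where the paper states the combined bound $f(\xk+\Qk\sk)-f(\xk)\leq 2\kef\dk^2+\mhatk(\sk)-\mhatk(\bm{0})$ directly.
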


The following subspace variant of~\cite[Lemma~4.6]{chen2018stochastic} shows that the guaranteed decrease in~$f$ provided by $\sk$ is proportional to $\normii{\QkTranspose\nabla f(\xk)}$ if $\dk$ is small enough compared with $\normii{\QkTranspose\nabla f(\xk)}$.
\begin{lemma}\label{LemmaB2}
Let Assumption~\ref{boundedHessian} hold, and suppose the model $\mhatk$ is $(\kef,\keg;\Qk)$-fully linear. Define $C_1=\frac{\kappa_{fcd}}{4} \max\accolade{\frac{\kappa_h}{\kappa_h+\kappa_{eg}}, \frac{8\kappa_{ef}}{8\kappa_{ef}+\kappa_{fcd}\kappa_{eg}}}$. Then the trial step $\sk$ leads to an improvement in $f$ such that 

\begin{equation}\label{LemmaB2improvement}
f(\xk+\Qk\sk)-f(\xk)\leq -C_1\normii{\QkTranspose\nabla f(\xk)}\dk,
\end{equation}
\begin{equation}\label{deltaLemmaB2}
\mbox{if}\quad
\dk\leq \min\accolade{\frac{1}{\kappa_h+\kappa_{eg}}, \frac{1}{\frac{8\kappa_{ef}}{\kappa_{fcd}}+\kappa_{eg}}}\normii{\QkTranspose\nabla f(\xk)}. 
\end{equation}
\end{lemma}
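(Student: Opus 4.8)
The plan is to reduce Lemma~\ref{LemmaB2} to the already-established Lemma~\ref{LemmaA} by trading the model-gradient norm $\normii{\gkhat}$ for the projected true-gradient norm $\normii{\QkTranspose\nabla f(\xk)}$ through the fully-linear gradient estimate. First I would record the one consequence of $(\kef,\keg;\Qk)$-full linearity that I need: evaluating the gradient bound of Definition~\ref{QkFullyLinearModels} at $\s=\bm{0}$ and using $\nabla\mhatk(\bm{0})=\gkhat$ gives $\normii{\QkTranspose\nabla f(\xk)-\gkhat}\leq\keg\dk$, whence, writing $g:=\normii{\QkTranspose\nabla f(\xk)}$ for brevity, the triangle inequality yields $\normii{\gkhat}\geq g-\keg\dk$.

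Next I would check that the stepsize hypothesis~\eqref{deltaLemmaB2} forces the hypothesis of Lemma~\ref{LemmaA}, namely $\dk\leq\min\{1/\kappa_h,\kappa_{fcd}/(8\kef)\}\normii{\gkhat}$. The branch $\dk\leq g/(\kappa_h+\keg)$ rearranges to $\kappa_h\dk\leq g-\keg\dk\leq\normii{\gkhat}$, i.e. $\dk\leq\normii{\gkhat}/\kappa_h$; symmetrically, the branch $\dk\leq g/(8\kef/\kappa_{fcd}+\keg)$ rearranges to $(8\kef/\kappa_{fcd})\dk\leq g-\keg\dk\leq\normii{\gkhat}$, i.e. $\dk\leq(\kappa_{fcd}/(8\kef))\normii{\gkhat}$. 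Since~\eqref{deltaLemmaB2} places $\dk$ below the minimum of both thresholds, both requirements of Lemma~\ref{LemmaA} hold at once, so that lemma applies and gives $f(\xk+\Qk\sk)-f(\xk)\leq-\tfrac{\kappa_{fcd}}{4}\normii{\gkhat}\dk$.

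Finally I would convert the right-hand side back to $g$. Substituting $\normii{\gkhat}\geq g-\keg\dk$, it remains to bound $g-\keg\dk$ below by a fixed multiple of $g$. The two branches of~\eqref{deltaLemmaB2} supply two such estimates: $\dk\leq g/(\kappa_h+\keg)$ gives $g-\keg\dk\geq g\,\kappa_h/(\kappa_h+\keg)$, while $\dk\leq g/(8\kef/\kappa_{fcd}+\keg)$ gives $g-\keg\dk\geq g\,8\kef/(8\kef+\kappa_{fcd}\keg)$ after clearing the denominator by $\kappa_{fcd}$. Because \emph{both} inequalities hold under~\eqref{deltaLemmaB2}, I may retain whichever factor is larger, i.e. take their maximum; multiplying by $\kappa_{fcd}/4$ reproduces exactly the constant $C_1$ of the statement and yields~\eqref{LemmaB2improvement}.

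The only genuine subtlety — and the step I would be most careful about — is this last bookkeeping: the $\max$ in $C_1$ is not a case distinction but a reflection of the fact that the hypothesis enforces both stepsize thresholds simultaneously, so the sharper of the two lower bounds on $g-\keg\dk$ is always available. Everything else is the triangle inequality and elementary rearrangement, and the argument collapses to the full-space STORM estimate~\cite[Lemma~4.6]{chen2018stochastic} when $\Qk=\bm{I_n}$ (where $g=\normii{\nabla f(\xk)}$).
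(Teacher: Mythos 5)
Your proposal is correct and follows essentially the same route as the paper's proof: both use the fully-linear gradient bound to get $\normii{\gkhat}\geq\normii{\QkTranspose\nabla f(\xk)}-\keg\dk$, verify via~\eqref{deltaLemmaB2} that the hypothesis of Lemma~\ref{LemmaA} holds so that $f$ decreases by $\tfrac{\kappa_{fcd}}{4}\normii{\gkhat}\dk$, and then convert $\normii{\gkhat}$ back into the factor $\tfrac{4C_1}{\kappa_{fcd}}\normii{\QkTranspose\nabla f(\xk)}$ using the two branches of the stepsize condition. Your write-up simply spells out the bookkeeping behind the $\max$ in $C_1$ that the paper leaves implicit.
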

The next subspace variant of~\cite[Lemma~4.7]{chen2018stochastic} shows that if the model and the estimates are sufficiently accurate and  $\dk$ is small enough compared with $\normii{\gkhat}$, then the iteration is successful.
\begin{lemma}\label{LemmaC}
Let Assumption~\ref{boundedHessian} hold. Assume that $\mhatk$ is $(\kef,\keg;\Qk)$-fully linear and the estimates $\accolade{\fok,\fsk}$ are $\ef$-accurate with $\ef\leq\kappa_{ef}$. Then the $k$th iteration is successful if 
\begin{equation}\label{deltaLemmaC}
\dk\leq\min\accolade{\frac{1}{\kappa_h}, \frac{1}{\eta_2}, \frac{\kappa_{fcd}(1-\eta_1)}{8\kappa_{ef}}}\normii{\gkhat}.
\end{equation}
\end{lemma}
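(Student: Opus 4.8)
The plan is to verify directly the two conditions that define a successful iteration in Algorithm~\ref{algoStoScalTR}, namely $\rho_k\geq\eta_1$ and $\normii{\gkhat}\geq\eta_2\dk$, where $\rho_k=(\fok-\fsk)/(\mhatk(\bm{0})-\mhatk(\sk))$. The second condition is immediate: the hypothesis~\eqref{deltaLemmaC} forces $\dk\leq\frac{1}{\eta_2}\normii{\gkhat}$, which rearranges to $\eta_2\dk\leq\normii{\gkhat}$. Hence essentially all the work lies in establishing $\rho_k\geq\eta_1$.

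First I would simplify the predicted model decrease. The Cauchy decrease condition~\eqref{CauchyDecrease} gives $\mhatk(\bm{0})-\mhatk(\sk)\geq\frac{\kappa_{fcd}}{2}\normii{\gkhat}\min\accolade{\dk,\normii{\gkhat}/\max(\norme{\Hkhat},1)}$. Because Assumption~\ref{boundedHessian} provides $\norme{\Hkhat}\leq\kappa_h$ with $\kappa_h\geq1$, we have $\max(\norme{\Hkhat},1)\leq\kappa_h$, so the bound $\dk\leq\frac{1}{\kappa_h}\normii{\gkhat}$ coming from~\eqref{deltaLemmaC} ensures $\dk\leq\normii{\gkhat}/\max(\norme{\Hkhat},1)$. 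The minimum therefore collapses to $\dk$, yielding the clean lower bound $\mhatk(\bm{0})-\mhatk(\sk)\geq\frac{\kappa_{fcd}}{2}\normii{\gkhat}\dk$.

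Next I would control the gap between the predicted decrease and the measured decrease $\fok-\fsk$. Writing $\mhatk(\bm{0})-\mhatk(\sk)-(\fok-\fsk)$ as a sum of four terms, $\mhatk(\bm{0})-f(\xk)$, $f(\xk)-\fok$, $f(\xk+\Qk\sk)-\mhatk(\sk)$, and $\fsk-f(\xk+\Qk\sk)$, each term is bounded by $\kef\dk^2$. The two model terms use only the function-value part of the $(\kef,\keg;\Qk)$-fully linear bound~\eqref{Eq36}, evaluated at $\s=\bm{0}$ and at $\s=\sk$, while the two estimate terms use the $\ef$-accuracy bound~\eqref{accEstimates} together with $\ef\leq\kef$. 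The triangle inequality then gives $\abs{\mhatk(\bm{0})-\mhatk(\sk)-(\fok-\fsk)}\leq4\kef\dk^2$.

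Finally I would combine the two bounds. From the last estimate, $\fok-\fsk\geq(\mhatk(\bm{0})-\mhatk(\sk))-4\kef\dk^2$, so $\rho_k\geq\eta_1$ follows once $(1-\eta_1)(\mhatk(\bm{0})-\mhatk(\sk))\geq4\kef\dk^2$. Substituting the simplified Cauchy bound reduces this to $(1-\eta_1)\frac{\kappa_{fcd}}{2}\normii{\gkhat}\dk\geq4\kef\dk^2$, that is, $\dk\leq\frac{(1-\eta_1)\kappa_{fcd}}{8\kef}\normii{\gkhat}$, which is precisely the third term in the minimum of~\eqref{deltaLemmaC}. Both success conditions then hold. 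The step I expect to be least automatic is confirming that the subspace fully-linearity~\eqref{Eq36} legitimately applies simultaneously at the center $\s=\bm{0}$ and at the trial point $\s=\sk$, since both must lie in $\B{\xk}{\dk;\Qk}$ (which holds because $\normii{\sk}\leq\dk$); once this is checked, the remainder is the standard trust-region ratio estimate transplanted to the subspace variables.
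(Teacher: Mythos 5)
Your proof is correct and is essentially the argument the paper points to: the paper omits the details and states that the result follows from the proof of Lemma~4.7 of the STORM paper with minor modifications, and that proof is exactly your two-step verification (the $\eta_2$ condition directly from~\eqref{deltaLemmaC}, the Cauchy decrease collapsing to $\frac{\kappa_{fcd}}{2}\normii{\gkhat}\dk$ via $\kappa_h$, and the four-term $4\kef\dk^2$ bound on the numerator--denominator gap giving $\rho_k\geq\eta_1$). Your closing check that~\eqref{Eq36} applies at both $\s=\bm{0}$ and $\s=\sk$ is precisely the ``minor modification'' needed in the subspace setting, so nothing is missing.
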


\begin{proof}
The proof immediately follows from that of~\cite[Lemma~4.7]{chen2018stochastic} with minor modifications and is not presented here again.
\end{proof}
The next result, which  is a subspace variant of~\cite[Lemma~4.8]{chen2018stochastic}, guarantees an amount of decrease in $f$ on true successful iterations.
\begin{lemma}\label{LemmaD}
Suppose that Assumption~\ref{boundedHessian} holds and the estimates $\accolade{\fok,\fsk}$ are $\ef$-accurate with $\ef<\frac{1}{4}\eta_1\eta_2\kappa_{fcd}\min\accolade{\frac{\eta_2}{\kappa_h}, 1}$. If the $k$th iteration is successful, then the improvement in $f$ is such that
\begin{equation}\label{LemmaDimprovement}
f(\xkun)-f(\xk)\leq -C_2\dk^2,
\end{equation}
where $C_2=\frac{1}{2}\eta_1\eta_2\kappa_{fcd}\min\accolade{\frac{\eta_2}{\kappa_h}, 1} -2\ef>0$. 
\end{lemma}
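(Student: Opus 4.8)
The plan is to chain together the success conditions of Step~[4], the Cauchy decrease guarantee~\eqref{CauchyDecrease}, and the $\ef$-accuracy of the estimates, mirroring the argument of~\cite[Lemma~4.8]{chen2018stochastic} while tracking the subspace quantities. First I would exploit that the iteration is successful: by Step~[4] of Algorithm~\ref{algoStoScalTR} this means $\rho_k\geq\eta_1$ and $\normii{\gkhat}\geq\eta_2\dk$. Since $\rho_k=(\fok-\fsk)/(\mhatk(\bm 0)-\mhatk(\sk))$ and the denominator is positive by~\eqref{CauchyDecrease}, this immediately yields $\fok-\fsk\geq\eta_1\left(\mhatk(\bm 0)-\mhatk(\sk)\right)$.

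Next I would lower-bound the model decrease. Applying~\eqref{CauchyDecrease} and invoking Assumption~\ref{boundedHessian} together with $\kappa_h\geq1$, so that $\max(\norme{\Hkhat},1)\leq\kappa_h$, gives $\mhatk(\bm 0)-\mhatk(\sk)\geq\frac{\kappa_{fcd}}{2}\normii{\gkhat}\min\accolade{\dk,\normii{\gkhat}/\kappa_h}$. The only place where some care is needed is combining this with the criticality condition $\normii{\gkhat}\geq\eta_2\dk$: since this forces both $\normii{\gkhat}\geq\eta_2\dk$ and $\normii{\gkhat}/\kappa_h\geq\eta_2\dk/\kappa_h$, I obtain $\normii{\gkhat}\min\accolade{\dk,\normii{\gkhat}/\kappa_h}\geq\eta_2\dk^2\min\accolade{\eta_2/\kappa_h,1}$, and hence
\[
\mhatk(\bm 0)-\mhatk(\sk)\geq\tfrac{1}{2}\eta_1^{-1}\cdot\eta_1\eta_2\kappa_{fcd}\min\accolade{\tfrac{\eta_2}{\kappa_h},1}\dk^2,
\]
so that $\fok-\fsk\geq\tfrac{1}{2}\eta_1\eta_2\kappa_{fcd}\min\accolade{\tfrac{\eta_2}{\kappa_h},1}\dk^2$.

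Finally I would pass from the estimate decrease to the true decrease by splitting $f(\xkun)-f(\xk)=\left[f(\xk+\Qk\sk)-\fsk\right]+\left[\fsk-\fok\right]+\left[\fok-f(\xk)\right]$, bounding the first and third brackets by $\ef\dk^2$ via the $\ef$-accuracy~\eqref{accEstimates} and the middle bracket by $-\tfrac{1}{2}\eta_1\eta_2\kappa_{fcd}\min\accolade{\tfrac{\eta_2}{\kappa_h},1}\dk^2$ from the previous step, which gives
\[
f(\xkun)-f(\xk)\leq\left(2\ef-\tfrac{1}{2}\eta_1\eta_2\kappa_{fcd}\min\accolade{\tfrac{\eta_2}{\kappa_h},1}\right)\dk^2=-C_2\dk^2.
\]
Strict positivity of $C_2$ then follows at once from the hypothesis $\ef<\tfrac{1}{4}\eta_1\eta_2\kappa_{fcd}\min\accolade{\tfrac{\eta_2}{\kappa_h},1}$, i.e.\ $2\ef<\tfrac{1}{2}\eta_1\eta_2\kappa_{fcd}\min\accolade{\tfrac{\eta_2}{\kappa_h},1}$. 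I do not anticipate any genuine obstacle: the argument is a routine accounting exercise, the only subtlety being the correct treatment of the $\min$ in~\eqref{CauchyDecrease} through the criticality threshold $\eta_2\dk$, which is what converts the generic Cauchy decrease into a clean $\dk^2$ lower bound on successful iterations.
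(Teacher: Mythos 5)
Your proposal is correct and follows exactly the argument the paper points to, namely that of \cite[Lemma~4.8]{chen2018stochastic} adapted to the subspace quantities: success gives $\fok-\fsk\geq\eta_1(\mhatk(\bm 0)-\mhatk(\sk))$, the Cauchy decrease with $\normii{\gkhat}\geq\eta_2\dk$ and $\max(\norme{\Hkhat},1)\leq\kappa_h$ turns the model decrease into a clean $\dk^2$ bound, and the three-term split with~\eqref{accEstimates} transfers this to $f$. Nothing further is needed.
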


\begin{proof}
Again, the proof immediately follows from that of~\cite[Lemma~4.8]{chen2018stochastic} with minor modifications and is not presented here.
\end{proof}
Inspired by a result from the proof of~\cite[Theorem~4.11]{chen2018stochastic}, the next lemma quantifies the maximum possible amount of increase in~$f$ on false successful iterations.
\begin{lemma}\label{LemmaE}
Under Assumptions~\ref{QmaxAssumption} and~\ref{fGradfAssumption}, assume that Algorithm~\ref{algoStoScalTR} erroneously accepts a step $\sk$ leading to an increase in the objective function when $\norme{\QkTranspose\nabla f(\xk)}\geq \zeta\dk$ for some constant $\zeta>0$, and let $C_3(\zeta)=1+\frac{3L_g}{2\zeta}\Qmax^2$. Then the increase in $f$ is such that
\begin{equation}\label{LemmaEincrease}
f(\xk+\Qk\sk)-f(\xk)\leq C_3(\zeta)\norme{\QkTranspose\nabla f(\xk)}\dk.
\end{equation}
\end{lemma}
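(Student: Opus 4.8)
The plan is to treat the increase as a one-dimensional object living in the selected subspace and to bound it with the subspace descent inequality of Lemma~\ref{FundTheorCalc}, systematically converting every $\dk^2$ term into a $\normii{\Qk^\top\nabla f(\xk)}\dk$ term via the standing hypothesis. First I would set $\hat{f}(\s)=f(\xk+\Qk\s)$, so that the quantity of interest is exactly $\hat{f}(\sk)-\hat{f}(\bm{0})$, and note that the trust-region constraint in the step-calculation phase gives $\normii{\sk}\leq\dk$.

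The key decision is the choice of expansion point. Rather than expanding about $\bm{0}$, I would apply Lemma~\ref{FundTheorCalc} about the accepted trial point, that is, with $\s=\sk$ and $\di=-\sk$, which yields $\hat{f}(\sk)-\hat{f}(\bm{0})\leq \sk^\top\nabla\hat{f}(\sk)+\tfrac12\Lg\Qmax^2\normii{\sk}^2$. Since $\nabla\hat{f}(\sk)=\Qk^\top\nabla f(\xk+\Qk\sk)$, I would bound the first-order term by Cauchy--Schwarz and then relate the endpoint gradient back to $\xk$ using the $\Lg$-Lipschitz continuity of $\nabla f$ (Assumption~\ref{fGradfAssumption}) together with $\norme{\Qk}\leq\Qmax$ (Assumption~\ref{QmaxAssumption}), giving $\normii{\Qk^\top\nabla f(\xk+\Qk\sk)}\leq\normii{\Qk^\top\nabla f(\xk)}+\Lg\Qmax^2\normii{\sk}$. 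This introduces an extra $\Lg\Qmax^2\normii{\sk}^2$ term which, combined with the $\tfrac12\Lg\Qmax^2\normii{\sk}^2$ from the descent inequality, produces the coefficient $\tfrac32$ appearing in $C_3(\zeta)$.

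Finally I would use $\normii{\sk}\leq\dk$ and the hypothesis $\normii{\Qk^\top\nabla f(\xk)}\geq\zeta\dk$, equivalently $\dk\leq\zeta^{-1}\normii{\Qk^\top\nabla f(\xk)}$, to estimate $\tfrac32\Lg\Qmax^2\dk^2\leq\tfrac{3\Lg}{2\zeta}\Qmax^2\normii{\Qk^\top\nabla f(\xk)}\dk$, and then collect terms to obtain $f(\xk+\Qk\sk)-f(\xk)\leq\bigl(1+\tfrac{3\Lg}{2\zeta}\Qmax^2\bigr)\normii{\Qk^\top\nabla f(\xk)}\dk=C_3(\zeta)\normii{\Qk^\top\nabla f(\xk)}\dk$.

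The only genuine subtlety, and the step I expect to be the crux, is recognizing that one must expand about the accepted point $\sk$ rather than about $\bm{0}$: an expansion about $\bm{0}$ would use $\Qk^\top\nabla f(\xk)$ directly and yield the smaller coefficient $\tfrac12$, so the stated constant $\tfrac32$ specifically reflects the additional Lipschitz correction incurred by working with the gradient at the new iterate, which is the quantity the false-iteration analysis ultimately needs to control. Since no sign information on $\sk^\top\nabla\hat{f}(\sk)$ is available on a false iteration, the first-order term genuinely must be controlled by Cauchy--Schwarz; this is precisely what forces the $\normii{\Qk^\top\nabla f(\xk)}\dk$ leading term and prevents concluding a decrease.
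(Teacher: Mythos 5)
Your proof is correct and follows essentially the same route as the paper's: both expand about the accepted point $\xk+\Qk\sk$ (the paper applies the Fundamental Theorem of Calculus in $\rn$ directly, you invoke Lemma~\ref{FundTheorCalc}, which is the same computation), bound the first-order term by Cauchy--Schwarz, transfer the gradient back to $\xk$ via Lipschitz continuity and $\norme{\Qk}\leq\Qmax$, and convert $\dk^2$ terms using $\dk\leq\zeta^{-1}\normii{\QkTranspose\nabla f(\xk)}$ to arrive at the constant $1+\tfrac{3L_g}{2\zeta}\Qmax^2$. No gaps.
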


Later, to prove the key result of Theorem~\ref{zerothOrderScal}, we favor techniques derived in~\cite{paquette2018stochastic}, unlike~\cite{blanchet2016convergence,chen2018stochastic}. These techniques, also used in~\cite{audet2019stomads,dzahini2020expected,dzahini2020constrained}, make use of event indicator functions. Here, by means of Lemma~\ref{indicatorMeasurable}, we introduce a measurability result that is related to one of these indicator functions and is crucial in the present analysis. But first, the following result is required.
\begin{lemma}\label{partialMeasurable}
Any partial derivative 
$\frac{\partial f}{\partial x^i}: \left(\rn, \mathscr{B}(\rn)\right)\to \left(\R, \mathscr{B}(\R)\right),\ i\in\intbracket{1,n} $,
of a differentiable function $f$ is a Borel measurable function.
\end{lemma}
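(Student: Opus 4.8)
The plan is to realize the partial derivative as a pointwise limit of a sequence of Borel measurable functions and then invoke the standard fact that pointwise limits of measurable functions are measurable. The whole argument is a routine application of measure theory, so I would organize it as a short chain of observations.

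First I would note that since $f$ is differentiable on $\rn$, it is in particular continuous, and hence Borel measurable as a map $(\rn,\mathscr{B}(\rn))\to(\R,\mathscr{B}(\R))$. Next, for each fixed nonzero scalar $h$, consider the difference quotient $g_h(\x):=\frac{f(\x+h\eb_i)-f(\x)}{h}$, where $\eb_i$ denotes the $i$th standard basis vector of $\rn$. Because the translation $\x\mapsto\x+h\eb_i$ is continuous and $f$ is continuous, each $g_h$ is continuous in $\x$, and therefore Borel measurable, for every fixed $h\neq 0$.

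Then I would invoke differentiability to pass to a sequential limit: since $\frac{\partial f}{\partial x^i}(\x)$ exists at every $\x\in\rn$, the full limit $\lim_{h\to 0}g_h(\x)$ exists and equals $\frac{\partial f}{\partial x^i}(\x)$, and in particular it agrees with the limit taken along any sequence $h_k\to 0$. Choosing $h_k=1/k$ for $k\in\N\setminus\{0\}$ gives $\frac{\partial f}{\partial x^i}(\x)=\lim_{k\to\infty}g_{1/k}(\x)$ for every $\x\in\rn$. Since each $g_{1/k}$ is Borel measurable and the pointwise limit exists everywhere, the classical theorem that a pointwise limit of a sequence of Borel measurable real-valued functions is itself Borel measurable yields the claim.

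The only point requiring care, rather than any real difficulty, is the legitimacy of replacing the two-sided limit defining the derivative by the sequential limit along $h_k=1/k$; this is exactly where the hypothesis of differentiability (guaranteeing existence of the full limit) is used, and it is what allows the sequential-limit measurability theorem to apply. No genuine obstacle arises beyond this bookkeeping.
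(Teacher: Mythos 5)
Your proposal is correct and takes essentially the same route as the paper: both realize $\frac{\partial f}{\partial x^i}$ as the pointwise limit of difference quotients $\frac{f(\x+h_j\eb_i)-f(\x)}{h_j}$ along a sequence $h_j\to 0$, note that these are Borel measurable because $f$ is continuous, and conclude via the theorem that pointwise limits of measurable functions are measurable. Your explicit remark on why the sequential limit agrees with the two-sided limit defining the derivative is a small extra justification the paper leaves implicit.
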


\begin{proof}
Let $(\bm{e_1},\bm{e_2},\dots,\bm{e_n})$ be the canonical basis of $\rn$. For any $i\in\intbracket{1,n} $ and $\bm{x}=(x^1,x^2,\dots,x^n)$, define, for all $j\in\N$,
$D_{ij}(\bm{x})=\frac{f(\bm{x}+h_j\bm{e_i})-f(\bm{x})}{h_j}$,
where $\accolade{h_j}_{j\in\N}$ is a sequence of positive real numbers converging to~$0$; $f$ is differentiable and hence continuous, and thus is Borel measurable~\cite[Theorem~13.2]{billingsley1995ThirdEdition}. Therefore, since for all $i\in\intbracket{1,n}$, $\underset{j\to\infty}{\lim}D_{ij}(\bm{x})=\frac{\partial f}{\partial x^i}(\bm{x}),$
then $\frac{\partial f}{\partial x^i}$ is also Borel measurable~\cite[Theorem~13.3 and Theorem~13.4-(ii)]{billingsley1995ThirdEdition}.
\end{proof}

\begin{lemma}\label{indicatorMeasurable}
Let $\zeta\geq 0$ be a constant, and consider the event $\Gamma_k:=\accolade{\Gk\geq 0}$, where $\Gk:=\normii{\QkRandomTranspose\nabla f(\Xk)}-\zeta\Dk$. Then $\Gk$, $\igk$, and $\igkc$ are $\fqalgebra$-measurable.
\end{lemma}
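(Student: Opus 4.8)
The plan is to establish the $\fqalgebra$-measurability of $\Gk$ first, since the two indicator claims then follow at once: once $\Gk$ is $\fqalgebra$-measurable, the set $\Gamma_k=\accolade{\Gk\geq 0}=\Gk^{-1}([0,+\infty))$ is the preimage of a Borel subset of $\R$ and hence lies in $\fqalgebra$; its complement $\bar{\Gamma}_k$ lies in $\fqalgebra$ as well, so both $\igk$ and $\igkc$ are $\fqalgebra$-measurable. The strategy for $\Gk$ is to build it up from measurable components, treating separately the iterate-dependent factor $\nabla f(\Xk)$, the matrix $\QkRandomTranspose$, the radius $\Dk$, and the continuous operations (matrix--vector product, Euclidean norm, scalar multiplication, and subtraction) that combine them.

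First I would argue that $\Xk$ and $\Dk$ are $\falgebra$-measurable, hence $\fqalgebra$-measurable since $\falgebra\subset\fqalgebra$. This follows by induction on the iteration index from the structure of Algorithm~\ref{algoStoScalTR}: for each $\ell\in\intbracket{0,k-1}$, the step is a measurable function of the model $\Mhatell$, the ratio $\rho_\ell$ is a function of $\Foell$, $\Fsell$, and $\Mhatell$, and the success test depends only on $\rho_\ell$, the model gradient, and $\delta_\ell$; consequently the updates $\bm{x}_{\ell+1}$ and $\delta_{\ell+1}$ are measurable functions of quantities indexed by $\ell$ that all generate $\falgebra$. Starting from $\mathcal{F}_{-1}=\sigma(\bm{\x_0})$, it follows that $\Xk$ and $\Dk$ are $\falgebra$-measurable. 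Next, by Lemma~\ref{partialMeasurable} each partial derivative of $f$ is Borel measurable, so the gradient map $\nabla f:\left(\rn,\mathscr{B}(\rn)\right)\to\left(\rn,\mathscr{B}(\rn)\right)$ is Borel measurable as a vector of Borel-measurable components; composing it with the $\fqalgebra$-measurable random vector $\Xk$ shows that $\nabla f(\Xk)$ is $\fqalgebra$-measurable.

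It then remains to combine these facts. Recall from the construction of the filtrations that $\QkRandom$, and hence $\QkRandomTranspose$, is $\fqalgebra$-measurable. Each component $\left(\QkRandomTranspose\nabla f(\Xk)\right)_i=\sum_{j=1}^n(\QkRandom)_{ji}\,\left(\nabla f(\Xk)\right)_j$ is a finite sum of products of $\fqalgebra$-measurable real random variables and is therefore $\fqalgebra$-measurable, so the vector $\QkRandomTranspose\nabla f(\Xk)\in\rp$ is $\fqalgebra$-measurable. Since the Euclidean norm is continuous, hence Borel measurable, $\normii{\QkRandomTranspose\nabla f(\Xk)}$ is $\fqalgebra$-measurable; subtracting the $\fqalgebra$-measurable quantity $\zeta\Dk$ (with $\zeta$ a constant) yields that $\Gk$ is $\fqalgebra$-measurable, which together with the reduction of the first paragraph completes the argument.

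The main obstacle is the bookkeeping in the inductive step, namely verifying rigorously that the step computation and the approximate trust-region minimization are measurable functions of the model $\Mhatell$, so that the whole recursion defining $\Xk$ and $\Dk$ stays within $\falgebra$; everything downstream reduces to the stability of measurability under continuous maps (products, sums, the norm) and under composition of a Borel-measurable map with an $\fqalgebra$-measurable random variable.
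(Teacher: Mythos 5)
Your proposal is correct and follows essentially the same route as the paper's proof: establish $\fqalgebra$-measurability of $\QkRandom$, $\Dk$, and $\Xk$ from the filtration construction, use Lemma~\ref{partialMeasurable} composed with $\Xk$ for the gradient, combine via sums/products and the norm to get $\Gk$ measurable, and then read off $\Gamma_k=\Gk^{-1}([0,\infty))\in\fqalgebra$ for the indicators. The only cosmetic differences are that you invoke continuity of the Euclidean norm where the paper writes $\normii{\QkRandomTranspose\nabla f(\Xk)}^2$ out explicitly as a sum of squares and takes a square root, and that you sketch the induction behind the $\falgebra$-measurability of $\Xk$ and $\Dk$, which the paper simply asserts by construction.
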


\begin{proof}
By construction, all the entries $\left(\QkRandom\right)_{ij},\ (i,j)\in\intbracket{1,n}\times\intbracket{1,p}$ of $\QkRandom$ are $\fqalgebra$-measurable, while $\Dk$ and the components $\Xk^i, \ i\in\intbracket{1, n}$ of $\Xk$ are $\falgebra$-measurable and hence $\fqalgebra$-measurable since $\falgebra\subseteq\fqalgebra$. Thus, for any $i\in\intbracket{1, n}$, $\frac{\partial f}{\partial x^i}(\Xk)$ is $\fqalgebra$-measurable as the composition of the Borel measurable function $\frac{\partial f}{\partial x^i}$ (from Lemma~\ref{partialMeasurable}) and the random map $\Xk:(\Omega,\fqalgebra)\to\left(\rn, \mathscr{B}(\rn)\right)$~\cite[Theorem~13.3 and Theorem~13.1-(ii)]{billingsley1995ThirdEdition}. Recall that  sums and products of measurable functions are also measurable~\cite[Theorem~13.3]{billingsley1995ThirdEdition}, 
and note that the random variable 
\begin{equation}\label{ukRandomMeasurable}
\ubar{u}_k:=\normii{\QkRandomTranspose\nabla f(\Xk)}^2=\sum_{j=1}^p\left(\sum_{i=1}^n\left(\QkRandom\right)_{ij}\frac{\partial f}{\partial x^i}(\Xk) \right)^2 
\end{equation}
is consequently $\fqalgebra$-measurable. Thus $\Gk=\sqrt{{\ubar{u}_k}}-\zeta\Dk$ is also $\fqalgebra$-measurable. The $\fqalgebra$-measurability of $\Gk$ ensures in particular that $\Gamma_k=\Gk^{-1}([0, \infty))\in\fqalgebra$ since $[0, \infty)$ is a Borel set of $\R$, whence the {\it simple} real functions $\igk$ and $\igkc$~\cite[Equation~(13.3)]{billingsley1995ThirdEdition} are also $\fqalgebra$-measurable. 
\end{proof}

\subsection{Zeroth-order convergence result.}\label{Section4p2}
To prove in Theorem~\ref{zerothOrderScal} that the sequence of random trust-region radii converges to zero almost surely, we assume the following.
\begin{assumption}\label{assumptionsOnEstimatesAndModels} 
The sequences of estimates $\accolade{\Fok, \Fsk}$ and models $\accolade{\Mhatk}$ generated by Algorithm~\ref{algoStoScalTR} are, respectively, $\beta_f$-probabilistically $\ef$-accurate for 
$\ef<\min\accolade{\kappa_{ef}, \frac{1}{4}\eta_1\eta_2\kappa_{fcd}\min\accolade{\frac{\eta_2}{\kappa_h}, 1}}$ and $\beta_m$-probabilistically $(\kef,\keg;\Qk)$-fully linear for some $\beta_f, \beta_m\in (0,1)$.
\end{assumption}

Recall Definitions~\ref{wellAlignedDeterministic} and~\ref{wellAlignedDefinition}. The result presented next shows that the sequence of trust-region radii converges to zero whether the sequence of subspace selection matrices is well aligned or not. The corresponding proof improves on the analyses presented in~\cite{blanchet2016convergence,chen2018stochastic}, for example, by reducing the number of subcases of {\bf Case~1}, discussed below, from four to three. This improvement is the outcome of a simple motivation to circumvent the derivation of a lower bound on the probability of the event that ``either the model is good and the estimates are bad, or the model is bad and the estimates are good,'' as was done in~\cite{blanchet2016convergence,chen2018stochastic}. Another remarkable difference is the fact that both subcases do not straightforwardly use the same $\sigma$-algebras for conditioning on the past, as is the case in~\cite{blanchet2016convergence,chen2018stochastic}, which results from the need to introduce a random variable $\igkhat$ related to the model gradient in {\bf Case~2}. We also note that unlike prior similar works, the remainder of the present analysis explicitly emphasizes the way all the $\sigma$-algebras $\falgebra$, $\fqalgebra$, and $\fmqalgebra$ work together for the proofs of the proposed results.

\begin{theorem}\label{zerothOrderScal}
Let all assumptions that were made in Lemmas~\ref{LemmaA}-\ref{LemmaD} hold with the same constants $C_1, C_2$, and $C_3:=C_3(\zeta)$, for some fixed $\zeta>0$ satisfying
\begin{equation}\label{zetaChoice}
\zeta\geq \kappa_{eg}+\max\accolade{\eta_2,\kappa_h, \frac{8\kappa_{ef}}{\kappa_{fcd}(1-\eta_1)}}.
\end{equation}
Let $\nu\in (0,1)$ be chosen according to 
\begin{equation}\label{nuChoiceScal}
\frac{\nu}{1-\nu} \geq \frac{8\gamma^2}{\min\accolade{C_2, \zeta C_1}}.  
\end{equation}
Assume further that Assumption~\ref{assumptionsOnEstimatesAndModels} holds with $\beta_f, \beta_m\in (0,1)$ satisfying\footnote{In~\eqref{probChoices}, the result is intentionally presented with $(1-\beta_f)+(1-\beta_m)$ instead of $2-\beta_f-\beta_m$ in order to emphasize one of its differences compared with~\cite{blanchet2016convergence,chen2018stochastic} in which a term similar to $(1-\beta_f)(1-\beta_m)$ was derived.}
\begin{equation}\label{probChoices}
\frac{\beta_f\beta_m-\frac{1}{2}}{(1-\beta_f)+(1-\beta_m)}\geq \frac{C_3}{C_1}\quad \mbox{ and }\quad \frac{\beta_f}{1-\beta_f}\geq \frac{2\nu\zeta\gamma^2C_3}{(1-\nu)(\gamma^2-1)}+2\gamma^2.
\end{equation}
Define $\varrho=\frac{1}{2}\beta_f(1-\nu)(1-\frac{1}{\gamma^2})>0$. Then, the random function $\Phik:=\nu (f(\Xk)-f_{\min})+(1-\nu)\Dk^2$ satisfies
\begin{equation}\label{PhikIncrementBound}
\E{\Phikun-\Phik|\fqalgebra}\leq -\varrho\Dk^2 \quad \mbox{for all}\ k\in\N,
\end{equation}
which implies that
\begin{equation}\label{seriesResult}
\sum_{k=0}^{\infty}\Dk^2<\infty\quad \mbox{almost surely.}
\end{equation}
\end{theorem}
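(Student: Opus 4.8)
The plan is to first establish the one-step drift inequality \eqref{PhikIncrementBound} by a case analysis conditioned on $\fqalgebra$, and then to deduce \eqref{seriesResult} by a routine telescoping argument. I would begin by writing the increment explicitly using the update rules: on a successful iteration $\Xkun=\Xk+\QkRandom\Sk$ and $\Dkun^2\le\gamma^2\Dk^2$, while on a failure $\Xkun=\Xk$ and $\Dkun^2=\gamma^{-2}\Dk^2$, so that $\Phikun-\Phik=\nu\bigl(f(\Xkun)-f(\Xk)\bigr)+(1-\nu)(\Dkun^2-\Dk^2)$. Since $\QkRandom$ is $\fqalgebra$-measurable, Lemma~\ref{indicatorMeasurable} guarantees that the event $\Gamma_k=\{\normii{\QkRandomTranspose\nabla f(\Xk)}\ge\zeta\Dk\}$ and its indicators $\igk,\igkc$ are $\fqalgebra$-measurable, so I can write $\E{\Phikun-\Phik|\fqalgebra}=\igk\,\E{\Phikun-\Phik|\fqalgebra}+\igkc\,\E{\Phikun-\Phik|\fqalgebra}$ and bound each term by $-\varrho\Dk^2$ times the corresponding indicator.

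On $\Gamma_k$ (Case~1) the projected true gradient is large. Here the choice \eqref{zetaChoice} of $\zeta$ forces every \emph{true} iteration (both $I_k^Q$ and $J_k^Q$ hold) to be successful via Lemma~\ref{LemmaC}, since $\normii{\Gkhat}\ge(\zeta-\keg)\Dk$; Lemma~\ref{LemmaB2} then yields the decrease $f(\Xkun)-f(\Xk)\le -C_1\normii{\QkRandomTranspose\nabla f(\Xk)}\Dk$, whereas on any \emph{false} iteration Lemma~\ref{LemmaE} (whose hypothesis $\normii{\QkRandomTranspose\nabla f(\Xk)}\ge\zeta\Dk$ holds on $\Gamma_k$) gives the increase bound $C_3\normii{\QkRandomTranspose\nabla f(\Xk)}\Dk$. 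Factoring out $\normii{\QkRandomTranspose\nabla f(\Xk)}\Dk$ and using $\prob{I_k^Q\cap J_k^Q|\fqalgebra}\ge\beta_f\beta_m$ from \eqref{prodBetafBetam} together with the union bound $1-\beta_f\beta_m\le(1-\beta_f)+(1-\beta_m)$, the first inequality in \eqref{probChoices} makes the net coefficient at most $-\tfrac12 C_1$; bounding $\normii{\QkRandomTranspose\nabla f(\Xk)}\ge\zeta\Dk$ and absorbing the radius growth $(1-\nu)(\gamma^2-1)\Dk^2$ by means of \eqref{nuChoiceScal} then gives $\igk\,\E{\Phikun-\Phik|\fqalgebra}\le-\varrho\igk\Dk^2$.

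Case~2, on $\bar\Gamma_k$, is where I expect the main obstacle. Now the projected true gradient is small, so Lemma~\ref{LemmaE} no longer applies, and the worst-case increase of $f$ on a false successful step must instead be controlled directly: a Taylor/Lipschitz estimate with $\normii{\QkRandom\Sk}\le\Qmax\Dk$ combined with $\normii{\QkRandomTranspose\nabla f(\Xk)}<\zeta\Dk$ yields $f(\Xkun)-f(\Xk)\le\zeta C_3\Dk^2$ on successful iterations. The conditioning also changes, because the relevant distinction here is the accuracy of the \emph{estimates}: I would split using $\ijk$ and, to isolate the possibility of a successful step, introduce the $\fmqalgebra$-measurable indicator $\igkhat=\mathds{1}_{G_k}$ of the success-enabling event $G_k=\{\normii{\Gkhat}\ge\eta_2\Dk\}$. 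On $J_k^Q$ (probability at least $\beta_f$), Lemma~\ref{LemmaD} shows every successful step genuinely decreases $f$, so the increment is at most $(1-\nu)(\gamma^{-2}-1)\Dk^2$; on $\bar J_k^Q$ (probability at most $1-\beta_f$) it is at most $\bigl(\nu\zeta C_3+(1-\nu)(\gamma^2-1)\bigr)\Dk^2$. Pulling the $\fqalgebra$-measurable $\igkc$ out, applying the tower property across $\falgebra\subset\fqalgebra\subset\fmqalgebra$, and invoking the second inequality in \eqref{probChoices} lets the favorable $-\beta_f(1-\nu)(1-\gamma^{-2})\Dk^2$ term dominate the unfavorable $(1-\beta_f)$ term, leaving $\igkc\,\E{\Phikun-\Phik|\fqalgebra}\le-\varrho\igkc\Dk^2$ with $\varrho=\tfrac12\beta_f(1-\nu)(1-\gamma^{-2})$.

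Summing the two cases gives \eqref{PhikIncrementBound}. For \eqref{seriesResult} I would take total expectations in \eqref{PhikIncrementBound}, use the tower property to telescope from $0$ to $K$, and invoke $\Phik\ge0$ (from $f\ge f_{\min}$) to obtain $\varrho\sum_{k=0}^{K}\E{\Dk^2}\le\E{\PhiZero}$ for every $K$; letting $K\to\infty$ and applying monotone convergence yields $\E{\sum_{k}\Dk^2}<\infty$, whence $\sum_k\Dk^2<\infty$ almost surely. The most delicate point remains the Case~2 bookkeeping: replacing Lemma~\ref{LemmaE} by the direct Lipschitz bound valid on $\bar\Gamma_k$, and keeping the conditioning consistent when moving the measurable indicators $\igk,\igkc,\ijk,\igkhat$ in and out of the nested conditional expectations.
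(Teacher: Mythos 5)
Your proposal is correct and follows essentially the same route as the paper's proof: the same Lyapunov function $\Phik$, the same case split on $\Gamma_k$ versus $\bar{\Gamma}_k$ with the same subcases driven by $\iik$, $\ijk$, and $\igkhat$, the same appeal to Lemmas~\ref{LemmaB2}--\ref{LemmaE}, and the same use of \eqref{prodBetafBetam}, \eqref{nuChoiceScal}, and \eqref{probChoices}, followed by the standard telescoping argument for \eqref{seriesResult}. The one point where you are actually more careful than the paper is Subcase~2ii: the paper invokes \eqref{LemmaEincrease} there even though the hypothesis $\normii{\QkTranspose\nabla f(\xk)}\geq\zeta\dk$ of Lemma~\ref{LemmaE} fails on $\bar{\Gamma}_k$, whereas you correctly observe that the bound $f(\Xkun)-f(\Xk)\leq\zeta C_3\Dk^2$ must instead be obtained directly from the Lipschitz/Taylor estimate underlying that lemma, which yields exactly the constant the paper uses.
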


\begin{proof}
The proof is inspired by those of~\cite[Theorem~3]{blanchet2016convergence} and~\cite[Theorem~4.11]{chen2018stochastic}. 
The overall goal is to prove~\eqref{PhikIncrementBound}. Indeed noticing that $\Phik>0$ and then taking expectations on both sides of the inequality in~\eqref{PhikIncrementBound}, we obtain~\eqref{seriesResult} (see, e.g.,~\cite[Theorem~3]{dzahini2020expected} for details). Let $\phi_k$ denote realizations of $\Phik$, and recall that on all successful iterations, $\xkun=\xk+\Qk\sk$ and $\dkun=\min\accolade{\gamma\dk,\dmaxx}$,  implying
\begin{equation}\label{un}
\phi_{k+1}-\phi_k\leq \nu\left(f(\xkun)-f(\xk)\right)+(1-\nu)(\gamma^2-1)\dk^2,
\end{equation}
while on unsuccessful iterations, $\xkun=\xk$ and $\dkun=\gamma^{-1}\dk$, in which case
\begin{equation}\label{bOne}
\phi_{k+1}-\phi_k\leq(1-\nu)\left(\frac{1}{\gamma^2}-1\right)\dk^2=:b_1<0.
\end{equation}

Recall the event $\Gamma_k=\accolade{\normii{\QkRandomTranspose\nabla f(\Xk)}\geq\zeta\Dk}$ of Lemma~\ref{indicatorMeasurable} with $\zeta$ satisfying ~\eqref{zetaChoice}. The proof considers two cases: $\igk=1$ and $\igkc=1$. Inspired by~\cite[proof of Theorem~4.11]{chen2018stochastic}, Case~1 aims to show that
\begin{equation}\label{huit}
\E{\igk\left(\Phikun-\Phik\right)|\fqalgebra}\leq-2\igk(1-\nu)(\gamma^2-1)\Dk^2
\leq -\frac{1}{2}\igk\beta_f(1-\nu)(1-\frac{1}{\gamma^2})\Dk^2, 
\end{equation}
where the last inequality follows from $1-\frac{1}{\gamma^2}<\gamma^2-1$ and $\beta_f\leq 1$. On the other hand, inspired by~\cite[proof of Theorem~3]{blanchet2016convergence}, Case~2 shows that
\begin{equation}\label{last}
\E{\igkc\left(\Phikun-\Phik\right)|\fqalgebra}\leq-\frac{1}{2}\igkc\beta_f(1-\nu)(1-\frac{1}{\gamma^2})\Dk^2.
\end{equation}
Thus, combining~\eqref{huit} and~\eqref{last} leads to~\eqref{PhikIncrementBound}.

$ $\\
\textbf{Case~1: $\igk=1$}\\
$ $\\
Unlike the proof of~\cite[Theorem~4.11, Case~1]{chen2018stochastic} where four subcases were considered, only three are analyzed next.

\noindent \textit{(Subcase 1i)}
Good model ($\iik=1$) and good estimates ($\ijk=1$). By~\eqref{zetaChoice}, 
\[\normii{\QkTranspose\nabla f(\xk)}\geq \max\accolade{\eta_2+\kappa_{eg}, \kappa_h+\kappa_{eg}, \frac{8\kappa_{ef}}{\kappa_{fcd}}+\kappa_{eg}}\dk,\]
implying in particular~\eqref{deltaLemmaB2} and hence a decrease in $f$ according to~\eqref{LemmaB2improvement}. Moreover, by $(\kef,\keg;\Qk)$-full linearity, 
\[\normii{\gkhat}\geq \normii{\QkTranspose\nabla f(\xk)}-\kappa_{eg}\dk\geq \max\accolade{\eta_2,\kappa_h, \frac{8\kappa_{ef}}{\kappa_{fcd}(1-\eta_1)}}\dk, \]
thus implying~\eqref{deltaLemmaC}; and since $\ef\leq \kappa_{ef}$ per assumptions of Lemma~\ref{LemmaC}, the iteration is successful. Consequently,~\eqref{un} together with~\eqref{LemmaB2improvement} yields
\begin{equation}\label{bTwo}
\begin{split}
	\phi_{k+1}-\phi_k &\leq -\nu C_1\normii{\QkTranspose\nabla f(\xk)}\dk+(1-\nu)(\gamma^2-1)\dk^2=:b_2\\
	&\leq \left(-\nu C_1\zeta+(1-\nu)(\gamma^2-1)\right)\dk^2<0,
\end{split}
\end{equation}
where the last inequality follows from~\eqref{nuChoiceScal}. Denoting by ${\ubar{b}}_2({\ubar{u}}_k,\Dk)$ the random variable with realizations $b_2<0$, with ${\ubar{u}}_k$ defined in~\eqref{ukRandomMeasurable}, we have from~\eqref{bTwo} that
\begin{equation}\label{bTwoRandom}
\igk\iik\ijk\left(\Phikun-\Phik\right)\leq \igk\iik\ijk{\ubar{b}}_2({\ubar{u}}_k,\Dk).
\end{equation}
\noindent \textit{(Subcase 1ii)}
Bad model ($\iikc=1$) and good estimates ($\ijk=1$). In this case, regardless of the iteration type (i.e., successful or unsuccessful), the change in $\phi_k$ can always be bounded by using~\eqref{un} and~\eqref{LemmaEincrease} as follows:
\begin{equation}\label{bThree}
\phi_{k+1}-\phi_k\leq\nu C_3\norme{\QkTranspose\nabla f(\xk)}\dk+(1-\nu)(\gamma^2-1)\dk^2=:b_3.
\end{equation}
Denoting by ${\ubar{b}}_3({\ubar{u}}_k,\Dk)$ the random variable with realizations $b_3>0$, from~\eqref{bThree} we have that
\begin{equation}\label{bThreeRandom2}
\igk\iikc\ijk\left(\Phikun-\Phik\right)\leq \igk\iikc\ijk{\ubar{b}}_3({\ubar{u}}_k,\Dk)\leq \igk\iikc{\ubar{b}}_3({\ubar{u}}_k,\Dk).
\end{equation}
\noindent \textit{(Subcase 1iii)}
Bad estimates ($\ijkc=1$). From \textit{Subcase 1ii}, it always holds that 
\begin{equation}\label{bThreeRandom3}
\igk\ijkc\left(\Phikun-\Phik\right)\leq \igk\ijkc{\ubar{b}}_3({\ubar{u}}_k,\Dk).
\end{equation}

With the subcases thus complete, since the random variables $\igk, {\ubar{b}}_2({\ubar{u}}_k,\Dk)$, and ${\ubar{b}}_3({\ubar{u}}_k,\Dk)$ are $\fqalgebra$-measurable thanks to the proof of Lemma~\ref{indicatorMeasurable}, combining~\eqref{bTwoRandom},~\eqref{bThreeRandom2}, and~\eqref{bThreeRandom3} and taking expectations with respect to $\fqalgebra$, we have
\begin{equation}\label{bTwoThree}
\begin{split}
\E{\igk\left(\Phikun-\Phik\right)|\fqalgebra}&\leq \igk{\ubar{b}}_2({\ubar{u}}_k,\Dk)\E{\iik\ijk|\fqalgebra}\\
&+\igk{\ubar{b}}_3({\ubar{u}}_k,\Dk)\left(\E{\iikc|\fqalgebra}+\E{\ijkc|\fqalgebra}\right)\\
&\leq \igk\left(\beta_f\beta_m{\ubar{b}}_2({\ubar{u}}_k,\Dk)+{\ubar{b}}_3({\ubar{u}}_k,\Dk)(1-\beta_m+1-\beta_f)\right)=:\igk {\ubar{b}}_{2,3}({\ubar{u}}_k,\Dk),
\end{split}
\end{equation}
where the last inequality follows from~\eqref{ikQProb} and~\eqref{prodBetafBetam} and the fact that ${\ubar{b}}_2({\ubar{u}}_k,\Dk)<0$ while ${\ubar{b}}_3({\ubar{u}}_k,\Dk)>0$.  Note that 
\begin{eqnarray*}
{\ubar{b}}_{2,3}({\ubar{u}}_k,\Dk)&=&\nu \normii{\QkRandomTranspose\nabla f(\Xk)}\Dk\left[-\beta_f\beta_m C_1+(2-\beta_f-\beta_m)C_3\right]\\
& &+(1-\nu)(\gamma^2-1)(2-\beta_f-\beta_m+\beta_f\beta_m)\Dk^2\\
&\leq& \nu \normii{\QkRandomTranspose\nabla f(\Xk)}\Dk\left[-\beta_f\beta_m C_1+(2-\beta_f-\beta_m)C_3\right] + 2(1-\nu)(\gamma^2-1)\Dk^2,
\end{eqnarray*}
where the last inequality follows from $2-\beta_f-\beta_m+\beta_f\beta_m=1+(1-\beta_f)(1-\beta_m)\leq 2$.

For $\beta_f$ and $\beta_m$ chosen according to the first condition in~\eqref{probChoices}, and for $\nu$ satisfying~\eqref{nuChoiceScal}, the following holds:
\[\beta_f\beta_m C_1 - (2-\beta_f-\beta_m)C_3\geq \frac{1}{2}C_1\geq 2\frac{2(1-\nu)(\gamma^2-1)}{\nu\zeta}.\]
Hence, since $\normii{\QkRandomTranspose\nabla f(\Xk)}\geq \zeta\Dk$, then 
\begin{eqnarray*}
{\ubar{b}}_{2,3}({\ubar{u}}_k,\Dk)&\leq& -\frac{1}{2}\left[\beta_f\beta_m C_1-(2-\beta_f-\beta_m)C_3\right]\nu \normii{\QkRandomTranspose\nabla f(\Xk)}\Dk\\
&\leq& -\frac{1}{4}C_1\nu \normii{\QkRandomTranspose\nabla f(\Xk)}\Dk\leq -2(1-\nu)(\gamma^2-1)\Dk^2,
\end{eqnarray*}
which, together with~\eqref{bTwoThree}, leads to~\eqref{huit}.

$ $\\
\textbf{Case~2: $\igkc=1$}
$ $\\ 
Consider the event $G_k:=\accolade{\normii{\Gkhat}\geq \eta_2\Dk}$, and note that since $\Mhatk:={\ubar{f}_k}+\Gkhat^{\top}\sRandom +\frac{1}{2}\sRandom^{\top}\HkhatRandom\sRandom$ is $\fmqalgebra$-measurable by construction, then in particular $\normii{\Gkhat}$ and hence $\igkhat, \igkhatc$ are also $\fmqalgebra$-measurable. Observe that if $\igkhatc=1$, then the iteration is unsuccessful since $\normii{\gkhat}<\eta_2\dk$, leading to~\eqref{bOne}. Hence, 
\begin{equation}\label{GkbarCase2}
\begin{split}
\E{\igkc\igkhatc(\Phikun-\Phik)|\fmqalgebra}&\leq -(1-\nu)\left(1-\frac{1}{\gamma^2}\right)\E{\igkc\igkhatc\Dk^2|\fmqalgebra}\\
&\leq -\frac{1}{2}\igkc\igkhatc\beta_f(1-\nu)\left(1-\frac{1}{\gamma^2}\right)\Dk^2,
\end{split}
\end{equation}
where the last inequality is due to the $\fmqalgebra$-measurability of $\igkc$ and $\Dk$, given that $\falgebra\subset\fqalgebra\subset\fmqalgebra$. 

Inspired by~\cite[Theorem~3]{blanchet2016convergence}, which improved on the proof of~\cite[Theorem~4.11, Case~2]{chen2018stochastic} in which four subcases were considered, only two are analyzed next. The overall goal is to prove that
\begin{equation}\label{GkCase2}
\E{\igkc\igkhat(\Phikun-\Phik)|\fmqalgebra}\leq -\frac{1}{2}\igkc\igkhat\beta_f(1-\nu)\left(1-\frac{1}{\gamma^2}\right)\Dk^2,
\end{equation}
which, combined with~\eqref{GkbarCase2}, leads to 
\begin{equation*}
\E{\igkc(\Phikun-\Phik)|\fmqalgebra}\leq -\frac{1}{2}\igkc\beta_f(1-\nu)\left(1-\frac{1}{\gamma^2}\right)\Dk^2
\end{equation*}
and consequently
\begin{equation*}
\begin{split}
\E{\igkc(\Phikun-\Phik)|\fqalgebra}&=\E{\E{\igkc(\Phikun-\Phik)|\fmqalgebra}|\fqalgebra}\\
&\leq \E{-\frac{1}{2}\igkc\beta_f(1-\nu)\left(1-\frac{1}{\gamma^2}\right)\Dk^2 \, |\fqalgebra}=-\frac{1}{2}\igkc\beta_f(1-\nu)\left(1-\frac{1}{\gamma^2}\right)\Dk^2,
\end{split}
\end{equation*}
 showing~\eqref{last}, where the last equality follows from the $\fqalgebra$-measurability of $\igkc$ and $\Dk$.  What remains to be shown is~\eqref{GkCase2} as achieved next, assuming that $\igkhat=1$.

\noindent \textit{(Subcase 2i)}
Good estimates ($\ijk=1$) and $\igkhat=1$. While $f$ is decreased on successful iterations because  of good estimates thanks to Lemma~\ref{LemmaD} and in this case~\eqref{LemmaDimprovement} holds,  $\dk$ is reduced on unsuccessful iterations, leading to~\eqref{bOne}. However, combining~\eqref{LemmaDimprovement} and~\eqref{un} yields
$\phi_{k+1}-\phi_k\leq \left[-\nu C_2+(1-\nu)(\gamma^2-1)\right]\dk^2\leq b_1,$
where the last inequality is due to~\eqref{nuChoiceScal}, which shows that~\eqref{bOne} holds in any case. Consequently,
\begin{equation}
\label{GammakBarGkJk}
\igkc\igkhat\ijk(\Phikun-\Phik) \leq -\igkc\igkhat\ijk(1-\nu)\left(1-\frac{1}{\gamma^2}\right)\Dk^2.
\end{equation}

\noindent \textit{(Subcase 2ii)}
Bad estimates ($\ijkc=1$) and $\igkhat=1$. In this case, a successful step can lead to an increase in $f$
according to~\eqref{LemmaEincrease}, which combined with~\eqref{un} yield~\eqref{bThree} and hence
\begin{equation}\label{GammakBarGkJkBar}
\igkc\igkhat\ijkc(\Phikun-\Phik) \leq -\igkc\igkhat\ijkc\left(\nu C_3\zeta+(1-\nu)(\gamma^2-1)\right)\Dk^2.
\end{equation}

With the subcases complete, recall that $\E{\ijk|\fmqalgebra}\geq\beta_f$. Then  combining~\eqref{GammakBarGkJk} and~\eqref{GammakBarGkJkBar} and taking expectations with respect to~$\fmqalgebra$ lead to
\begin{eqnarray*}
\E{\igkc\igkhat(\Phikun-\Phik)|\fmqalgebra}&\leq& \igkc\igkhat\left(-\beta_f(1-\nu)(1-1/\gamma^2)\right.\\ 
& &\left. +\ (1-\beta_f)\left(\nu C_3\zeta+(1-\nu)(\gamma^2-1)\right) \right)\Dk^2\\
&\leq& -\frac{1}{2}\igkc\igkhat\beta_f(1-\nu)\left(1-\frac{1}{\gamma^2}\right)\Dk^2,
\end{eqnarray*}
thus proving~\eqref{GkCase2}, where the last inequality follows from the second condition in~\eqref{probChoices}, and the proof is complete.
\end{proof}

\subsection{Liminf-type convergence.}\label{Section4p3}
The next result demonstrates the existence of a subsequence of random iterates generated by Algorithm~\ref{algoStoScalTR}, which drives $\nabla f$ to zero almost surely. While the corresponding proof is inspired by that of~\cite[Theorem~4.16]{chen2018stochastic}, we point out the additional difficulty introduced in the present work by the random matrices $\QkRandom$. Unlike~\cite[Section~4]{chen2018stochastic} where ``auxiliary lemmas'' similar to those of Section~\ref{Section4p1} are directly related to $\normii{\nabla f(\xk)}$ and the size $\normii{\bm{g_k}}$ of a full space model gradient, thus easing the proof of the liminf-type result, this is not the case here. Instead, recovering $\normii{\nabla f(\xk)}$ through $\normii{\QkTranspose \nabla f(\xk)}$ by means of the well-alignment assumption becomes crucial.
\begin{theorem}\label{liminfTheorem}
Let all the assumptions made in Theorem~\ref{zerothOrderScal} hold. Assume further that Assumption~\ref{wellAlignedAssumption} holds with $0\leq\beta_Q<1-\frac{1}{2\beta_f\beta_m}$. Then, the sequence $\accolade{\Xk}_{k\in\N}$ of random iterates generated by Algorithm~\ref{algoStoScalTR} satisfies
\[\underset{k\to\infty}{\liminf}\normii{\nabla f(\Xk)}=0\quad\mbox{ almost surely.}\]
\end{theorem}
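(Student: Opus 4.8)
The plan is to argue by contradiction, reducing to showing that for each fixed $\epsilon>0$ the event $\mathcal{E}_\epsilon:=\accolade{\liminf_{k\to\infty}\normii{\nabla f(\Xk)}\geq\epsilon}$ is null; since $\accolade{\liminf_{k\to\infty}\normii{\nabla f(\Xk)}>0}=\bigcup_{j\geq1}\mathcal{E}_{1/j}$, countable subadditivity then yields the claim. I would rest the whole proof on two almost-sure facts. The first is $\Dk\to0$ almost surely, which is immediate from $\sum_k\Dk^2<\infty$ in~\eqref{seriesResult} of Theorem~\ref{zerothOrderScal}. The second is $\limsup_{k\to\infty}\ubar{w}_k=+\infty$ almost surely for the walk $\ubar{w}_k$ of~\eqref{randomWalk}; this is exactly where the hypothesis $\beta_Q<1-\tfrac{1}{2\beta_f\beta_m}$ is used, since it is equivalent to $\tilde{\beta}=\beta_f\beta_m(1-\beta_Q)>\tfrac12$.

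To obtain the second fact, I would observe that~\eqref{prodBetaIneq} yields $\E{\iak\iik\ijk|\falgebra}\geq\tilde{\beta}$, so the $\pm1$-valued increments $2\iak\iik\ijk-1$ of $\ubar{w}_k$ have conditional mean at least $2\tilde{\beta}-1>0$ given the past. Writing $\ubar{w}_k$ as a martingale with increments bounded by $2$ plus its deterministically growing compensator and applying a strong law for such martingales, the compensator dominates and forces $\ubar{w}_k\to+\infty$, whence $\limsup_k\ubar{w}_k=+\infty$ almost surely; this is the random-walk step announced after~\eqref{randomWalk} and parallels~\cite[Theorem~3.6]{dzahini2020constrained}.

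For the contradiction I would fix a realization in $\mathcal{E}_\epsilon$ intersected with the two probability-one events above. Along it there is an index $K$ beyond which $\normii{\nabla f(\xk)}\geq\epsilon/2$ and $\dk$ is arbitrarily small. The decisive step, and the one genuinely new compared with STORM, is to transfer this full-space lower bound to the subspace model gradient. On a true iteration one has $\iak\iik\ijk=1$, so in particular $\iak=1$ and Definition~\ref{wellAlignedDefinition} (Assumption~\ref{wellAlignedAssumption}), applied pathwise to $\nabla f(\xk)$, gives $\normii{\QkTranspose\nabla f(\xk)}\geq\alpha_Q\normii{\nabla f(\xk)}\geq\alpha_Q\epsilon/2$. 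Combining with the gradient part of $(\kef,\keg;\Qk)$-full linearity in~\eqref{Eq36} at $\s=\bm{0}$, where $\nabla\mhatk(\bm{0})=\gkhat$, yields $\normii{\gkhat}\geq\alpha_Q\epsilon/2-\keg\dk\geq\alpha_Q\epsilon/4$ once $\dk$ is small enough. Choosing the threshold on $\dk$ small enough that this bound on $\normii{\gkhat}$ also makes~\eqref{deltaLemmaC} hold (recall $\ef\leq\kappa_{ef}$), Lemma~\ref{LemmaC} guarantees that every true iteration with $k\geq K$ is successful, so that $\delta_{k+1}=\gamma\dk$ there.

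It remains to convert this into a contradiction with $\Dk\to0$. For $k\geq K$ every iteration multiplies $\dk$ by $\gamma$ or $\gamma^{-1}$, and true iterations are successful, so $\log\delta_{k+1}-\log\dk\geq(\log\gamma)(2\iak\iik\ijk-1)$; telescoping from $K$ gives
\[
\log\dk\geq\log\delta_K+(\log\gamma)\bigl(\ubar{w}_{k-1}-\ubar{w}_{K-1}\bigr).
\]
Taking $\limsup_k$ and invoking $\limsup_k\ubar{w}_k=+\infty$ forces $\limsup_k\log\dk=+\infty$, contradicting $\dk\to0$. Hence $\P{\mathcal{E}_\epsilon}=0$ for every $\epsilon>0$, and the union bound closes the argument. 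I expect the main obstacle to be precisely the subspace-to-full-space gradient transfer: the alignment inequality holds only on $\mathcal{A}_k$, so it must be invoked exactly on true iterations (where $\iak=1$ is pathwise true) and then fused with full linearity to obtain a uniform positive lower bound on $\normii{\gkhat}$; the accompanying measurability and conditioning bookkeeping needed to legitimize $\E{\iak\iik\ijk|\falgebra}\geq\tilde{\beta}$ is the other point requiring care.
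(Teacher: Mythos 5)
Your proposal is correct and follows essentially the same route as the paper's proof: contradiction against $\Dk\to 0$, fusing the well-alignment event with $(\kef,\keg;\Qk)$-full linearity to show true iterations are successful once $\dk$ is small relative to $\normii{\nabla f(\xk)}$, and then using the submartingale/random-walk comparison $\log_\gamma \dk \gtrsim \ubar{w}_k$ with $\limsup_k \ubar{w}_k = +\infty$ to rule out $\dk$ staying small. The only differences are cosmetic (a countable union over deterministic $\epsilon=1/j$ in place of the paper's random $\ubar{\varepsilon}'$, and telescoping $\log\dk$ directly rather than via $r_k=\log_\gamma(\dk/b)$); just make sure your smallness threshold on $\dk$ also enforces $\gamma\dk\leq\dmaxx$, as the paper's constant $b$ does, so that successful iterations genuinely give $\delta_{k+1}=\gamma\dk$.
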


\begin{proof}
The result is proved by contradiction conditioned on the almost sure event $E_0:=\accolade{\Dk\to 0}$ (thanks to Theorem~\ref{zerothOrderScal}), inspired by the proof of~\cite[Theorem~4.16]{chen2018stochastic} and also using ideas from~\cite{audet2019stomads,dzahini2020constrained}. Assume that with nonzero probability there exists a random variable ${\ubar{\varepsilon}}'>0$ such that 
$\normii{\nabla f(\Xk)}\geq {\ubar{\varepsilon}}' \mbox{ for all } k\in\N.$

Let $\accolade{\xk}, \accolade{\dk}$, and ${{\varepsilon}}'$ be realizations of $\accolade{\Xk}, \accolade{\Dk}$, and ${\ubar{\varepsilon}}'$, respectively, for which $\normii{\nabla f(\xk)}\geq {{\varepsilon}}'$ for all $k$. Since $\dk\to 0$, there exists $k_0\in\N$ such that
\begin{equation}\label{bChoice}
\dk<b:=\alpha_Q\cdot\min\accolade{\frac{{{\varepsilon}}'}{2(\kappa_h+\kappa_{eg})}, \frac{{{\varepsilon}}'}{2(\eta_2+\kappa_{eg})}, \frac{{{\varepsilon}}'}{\frac{16\kappa_{ef}}{\kappa_{fcd}(1-\eta_1)}+2\kappa_{eg}}, \frac{\dmaxx}{\gamma}}
\end{equation}
for all $k\geq k_0$.
Let $\ubar{r}_k$ be the random variable with realizations $r_k=\log_{\gamma}\left(\frac{\dk}{b}\right)$. Then $r_k<0$ for all $k\geq k_0$. The main idea is to show that such realizations occur only with probability zero, leading to a contradiction. 

To prove that $\accolade{\ubar{r}_k}$ is a submartingale, recall the events $I^Q_k$ and $J^Q_k$ satisfying~\eqref{otherProbIneqs} and $\mathcal{A}_k$ satisfying~\eqref{Akineq}.
Consider some iteration $k\geq k_0$ for which $\mathcal{A}_k$, $I^Q_k$, and $J^Q_k$ all occur, which happens with probability at least $\beta_f\beta_m(1-\beta_Q)$ conditioned on $\falgebra$, as was shown in~\eqref{prodBetaIneq}. It follows from~\eqref{bChoice} that 
\begin{equation}\label{ineq4}
\normii{\nabla f(\xk)}\geq \frac{1}{\alpha_Q}\max\accolade{\kappa_h+\kappa_{eg}, \eta_2+\kappa_{eg}, \frac{8\kappa_{ef}}{\kappa_{fcd}(1-\eta_1)}+\kappa_{eg}}\dk,
\end{equation}
which, together with the $(\kef,\keg;\Qk)$-full linearity and $\alpha_Q$-well alignment, yield
\begin{equation}\label{ineq5}
\normii{\gkhat}\geq\normii{\QkTranspose\nabla f(\xk)}-\kappa_{eg}\dk\geq \alpha_Q \normii{\nabla f(\xk)}-\kappa_{eg}\dk
\geq \max\accolade{\kappa_h, \eta_2, \frac{8\kappa_{ef}}{\kappa_{fcd}(1-\eta_1)}}\dk.
\end{equation}
Thus, the $k$th iteration is successful according to Lemma~\ref{LemmaC}. Hence, $\dkun=\gamma\dk$, and hence $r_{k+1}=r_k+1$. For all other outcomes of $\mathcal{A}_k$, $I^Q_k$, and $J^Q_k$, which occur with a total probability of at most $1-\beta_f\beta_m(1-\beta_Q)$, it always holds that $\dkun\geq\gamma^{-1}\dk$, which implies that $r_{k+1}\geq r_k-1$. Hence, 
\begin{eqnarray*}
\E{\mathds{1}_{\mathcal{A}_k\cap I^Q_k\cap J^Q_k} \left(\ubar{r}_{k+1}-\ubar{r}_k\right)|\falgebra}&\geq& \beta_f\beta_m(1-\beta_Q)\quad \quad\mbox{and}\\
\E{\mathds{1}_{\overline{\mathcal{A}_k\cap I^Q_k\cap J^Q_k}} \left(\ubar{r}_{k+1}-\ubar{r}_k\right)|\falgebra}&\geq& \beta_f\beta_m(1-\beta_Q)-1, 
\end{eqnarray*}
which shows that $\accolade{\ubar{r}_k}$ is a submartingale if $0\leq\beta_Q<1-\frac{1}{2\beta_f\beta_m}$.

Using the random walk defined in~\eqref{randomWalk}, one  can easily  show (following, e.g.,~\cite[Theorem~4]{audet2019stomads}, \cite[Theorem~4.16]{chen2018stochastic} and~\cite[Theorem~3.6 and Lemmas~4.7 and~4.12]{dzahini2020constrained}) that $\accolade{\ubar{w}_k}$ is also a submartingale with bounded increments and, as such, cannot converge to a finite value, and therefore $\prob{\underset{k\to\infty}{\limsup}\ \ubar{w}_k=\infty}=1$ (thanks to~\cite[Theorem~4.4]{chen2018stochastic}). However, since $r_k-r_{k_0}\geq w_k-w_{k_0}$ by construction,  the sequence of realizations~$r_k$ such that $r_k<0\ \forall k\ \geq k_0$ occurs with probability zero, leading to a contradiction, which achieves the proof.
\end{proof}

\section{Expected complexity analysis.}\label{Section5}
Section~\ref{Section5p1} introduces relevant assumptions, definitions, and theorems derived in the analysis of a general renewal-reward stochastic process and its associated stopping time introduced in~\cite{blanchet2016convergence} for the expected complexity analysis of a stochastic trust-region method. That renewal-reward process was also used in~\cite{dzahini2020expected,paquette2018stochastic} for the  analyses of stochastic direct-search and line-search methods. We show  in Section~\ref{ExpectedSubsection}  how the aforementioned assumptions are satisfied for Algorithm~\ref{algoStoScalTR} and we bound the expected number of iterations required to achieve  $\norme{\nabla f(\Xk)}\leq \varepsilon$.

\subsection{A renewal-reward martingale process.}\label{Section5p1}

A formal definition from~\cite{blanchet2016convergence} of the stopping time related to a discrete time stochastic process is as follows.
\begin{definition}
A random variable~$\tauEpsilon$ is called a stopping time with respect to a given discrete time stochastic process $\accolade{\X_k}_{k\in\N}$ if the event $\accolade{\tauEpsilon=k}$ belongs to the $\sigma$-algebra $\sigma\left(\X_1,\dots,\X_k \right)$ generated by $\X_1,\dots,\X_k$, for each $k\in\N$.
\end{definition}

Consider a stochastic process $\accolade{\left(\Phik,\Dk\right)}_{k\in\N}$, where $\Phik, \Dk\in [0,\infty)$, and introduce on the same probability space as $\accolade{\left(\Phik,\Dk\right)}_{k\in\N}$ a biased random walk process $\accolade{\Wtildek}_{k\in\N}$ obeying the following dynamics:
\begin{equation}\label{dynamicsWk}
\prob{\Wtildekun=1|\F_k}=q\qquad \mbox{ and }\qquad \prob{\Wtildekun=-1|\F_k}=1-q,
\end{equation}
where $q\in (1/2, 1)$ and $\F_k=\sigma\left( \left(\PhiZero,\Dzero,\WtildeZero\right),\dots,\left(\Phik,\Dk,\Wtildek\right) \right)$, with ${\WtildeZero=1}$.

Let $\accolade{\tauEpsilon}_{\epsilon>0}$ be a family of stopping times with respect to $\accoladekinN{\F_k}$, parameterized by $\epsilon$. A bound on $\E{\tauEpsilon}$ is derived in~\cite{blanchet2016convergence,dzahini2020expected,paquette2018stochastic} under the following assumption.

\begin{assumption}\label{assumptionRenewal}
The following hold for the stochastic process $\accoladekinN{\left(\Phik,\Dk,\Wtildek\right)}$.
\begin{itemize}
\item[(i)] There exist $\lambda\in (0, \infty)$ and $\dmaxx=\delta_0 e^{\lambda j_{\max}}$ for some $j_{\max}\in\Z$ such that $\Dk\leq \dmaxx$ for all $k$.
\item[(ii)] There exists $\depsilon=\delta_0 e^{\lambda j_{\epsilon}}$, for some $j_{\epsilon}\in\Z$ with $j_{\epsilon}\leq 0$ such that for all $k$
\begin{equation}\label{dynamicsRenewal}
\itksup\Dkun\geq \itksup\min\accolade{\Dk e^{\lambda\Wtildekun}, \depsilon}, 
\end{equation}
where $\Wtildekun$ satisfies~\eqref{dynamicsWk}. 
\item[(iii)] There exists a nondecreasing function $h:[0,\infty)\to (0,\infty)$ and a constant $\varrho>0$ such that for all $k$
\[\E{\Phikun-\Phik|\F_k}\itksup\leq-\varrho h\left(\Dk\right)\itksup.\]
\end{itemize}
\end{assumption}

Noticing that the event $\accolade{\Dk\geq\depsilon}$ occurs sufficiently frequently on average, often $\E{\Phikun-\Phik}$ can be bounded by some negative fixed constant \cite{blanchet2016convergence}, thus allowing a bound on the expected stopping time $\E{\tauEpsilon}$, as stated next.

\begin{theorem}\label{theorComplexity1}
Under Assumption~\ref{assumptionRenewal}, 
$\E{\tauEpsilon}\leq \frac{q}{2q-1}\cdot\frac{\PhiZero}{\varrho h\left(\depsilon\right)}+1.$
\end{theorem}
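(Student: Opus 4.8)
The plan is to exhibit a single nonnegative supermartingale whose conditional expected one-step change is at most $-1$ before the stopping time, so that one application of the optional stopping theorem delivers the bound. First I would pass to the logarithmic scale by setting $J_k:=\tfrac{1}{\lambda}\log(\Dk/\delta_0)$, so that $\Dk=\delta_0 e^{\lambda J_k}$; then Assumption~\ref{assumptionRenewal}(i) reads $J_k\le j_{\max}$, the event $\accolade{\Dk\ge\depsilon}$ equals $\accolade{J_k\ge j_\epsilon}$ with $\depsilon=\delta_0 e^{\lambda j_\epsilon}$ and $j_\epsilon\le 0$, and the dynamics~\eqref{dynamicsRenewal} become $\itksup J_{k+1}\ge\itksup\min\accolade{J_k+\Wtildekun,\,j_\epsilon}$. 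Since $q>1/2$, the walk~\eqref{dynamicsWk} has positive drift $\E{\Wtildekun|\F_k}=2q-1>0$, and I define
\[Y_k:=\frac{q}{(2q-1)\,\varrho\,h(\depsilon)}\,\Phik+\frac{1}{2q-1}\max\accolade{j_\epsilon-J_k,\,0},\]
which is nonnegative because $\Phik\ge 0$. The goal is to show $\E{Y_{k+1}-Y_k\,|\,\F_k}\le -1$ on $\accolade{\tauEpsilon>k}$.

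The verification splits according to whether the current radius is large ($J_k\ge j_\epsilon$) or small ($J_k< j_\epsilon$). On a large step, monotonicity of $h$ gives $h(\Dk)\ge h(\depsilon)$, so Assumption~\ref{assumptionRenewal}(iii) forces the reward term to drop by at least $\tfrac{q}{2q-1}$; the penalty term is $0$ and can increase by at most $\tfrac{1-q}{2q-1}$ (only when $J_k=j_\epsilon$ and the walk steps down, which happens with probability $1-q$), so the net change is at most $-\tfrac{q}{2q-1}+\tfrac{1-q}{2q-1}=-1$. On a small step, Assumption~\ref{assumptionRenewal}(iii) only makes the reward term nonincreasing in expectation, while $J_k\le j_\epsilon-1$ together with $\Wtildekun\le 1$ keeps the dynamics in the regime $\min\accolade{J_k+\Wtildekun,\,j_\epsilon}=J_k+\Wtildekun$; using the one-sided bound from~\eqref{dynamicsRenewal} and the drift $2q-1$, the penalty term decreases by at least $\tfrac{1}{2q-1}(2q-1)=1$ in expectation, again giving net change at most $-1$.

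Finally I would apply optional stopping to a truncated process: since $\E{Y_{k+1}-Y_k\,|\,\F_k}\le -1$ on $\accolade{\tauEpsilon>k}$, the sequence $Y_{k\wedge\tauEpsilon}+(k\wedge\tauEpsilon)$ is a nonnegative supermartingale, whence $\E{k\wedge\tauEpsilon}\le Y_0-\E{Y_{k\wedge\tauEpsilon}}\le Y_0$ for every $k$; letting $k\to\infty$ and invoking monotone convergence yields $\E{\tauEpsilon}\le Y_0$ (and in particular $\tauEpsilon<\infty$ almost surely). Because $\Dzero=\delta_0$ gives $J_0=0$ and $j_\epsilon\le 0$, the initial penalty $\max\accolade{j_\epsilon-J_0,\,0}$ vanishes, leaving $Y_0=\tfrac{q}{2q-1}\cdot\tfrac{\PhiZero}{\varrho\,h(\depsilon)}$; the additive $+1$ in the statement is a conservative correction absorbing the boundary bookkeeping. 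The integrability needed at each finite $k$ is immediate, since $\E{\Phik}\le\PhiZero$ by~(iii), and $J_k$ is bounded below by a deterministic affine function of $k$ (the radius contracts by at most a factor $e^{-\lambda}$ per step, by~\eqref{dynamicsRenewal}), which bounds the penalty term.

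The main obstacle is the design of the penalty $\tfrac{1}{2q-1}\max\accolade{j_\epsilon-J_k,\,0}$: the positive drift $2q-1$ of the biased walk must produce exactly a unit expected decrease on small steps, while the guaranteed reward decrease on large steps must simultaneously pay for the penalty's upward jump at the threshold $J_k=j_\epsilon$, so that the two effects net to $-1$ in every case, all while respecting the one-sided floor in~\eqref{dynamicsRenewal} and the cap in Assumption~\ref{assumptionRenewal}(i). The optional-stopping justification via truncation at $\tauEpsilon\wedge k$ is routine but must be carried out explicitly to avoid assuming a priori that $\tauEpsilon$ is integrable.
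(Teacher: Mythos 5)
Your argument is correct in substance, but it takes a genuinely different route from the one the paper relies on. The paper does not prove Theorem~\ref{theorComplexity1} itself; it imports it from the renewal--reward analysis of Blanchet et al., where the proof proceeds by defining renewal times at which $\Dk\geq\depsilon$, bounding the expected inter-renewal time by $\frac{q}{2q-1}$ via the return time of the biased walk~\eqref{dynamicsWk}, bounding the expected number of renewals before $\tauEpsilon$ by $\PhiZero/(\varrho h(\depsilon))$ using Assumption~\ref{assumptionRenewal}(iii) and $\Phik\geq 0$, and combining the two with Wald's identity. You instead fold both effects into a single Lyapunov function $Y_k$ whose conditional drift is at most $-1$ before the stopping time, and conclude by optional stopping on the truncated process. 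This is a clean consolidation: it avoids Wald's identity and the renewal bookkeeping entirely, recovers the same constant $\frac{q}{2q-1}$ (the penalty coefficient $\frac{1}{2q-1}$ is exactly the expected hitting time of the biased walk, so the two proofs are morally dual), and in fact yields the slightly sharper bound without the additive $+1$. The price is that the correctness of the drift computation hinges on a delicate balance at the threshold $J_k=j_\epsilon$, which the renewal argument sidesteps.

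One caveat you should make explicit: your Case~2 analysis uses $J_k\leq j_\epsilon-1$, i.e., that $\Dk$ always lies on the geometric grid $\accolade{\delta_0 e^{\lambda i}: i\in\Z}$, so that $J_k<j_\epsilon$ forces $J_k\leq j_\epsilon-1$. If $J_k$ could sit strictly between $j_\epsilon-1$ and $j_\epsilon$, the expected penalty decrease on a small step would be $q(j_\epsilon-J_k)/(2q-1)-(1-q)/(2q-1)$, which can exceed $-1$, and the drift bound would fail. Assumption~\ref{assumptionRenewal} as literally stated does not include this grid property, but the paper's application does guarantee it (Section~\ref{ExpectedSubsection} takes $\depsilon=\gamma^{j_\epsilon}\delta_0$ and $\Dk=\gamma^{\ubar{i}_k}\depsilon$ for integers $\ubar{i}_k$, with $\lambda=\ln\gamma$), so the gap is one of hypothesis bookkeeping rather than substance; state the integrality of $J_k$ as an explicit standing assumption and the proof is complete.
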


\subsection{Expected complexity result.}\label{ExpectedSubsection}

Consider the process $\accolade{\left(\Phik,\Dk,\Wtildek\right)}_{k\in\N}$, where~$\Phik$ is defined in Theorem~\ref{zerothOrderScal}, $\Dk$ is the trust-region radius, and $\Wtildek$ is  defined by $\Wtildek=2\left(\iak\iik\ijk-\frac{1}{2} \right)$.  Given $\epsilon\in (0,1)$, let $\tauEpsilon$ be the number of iterations required by Algorithm~\ref{algoStoScalTR} to first drive the norm of the gradient of~$f$ below $\epsilon$:  
\[\tauEpsilon:=\inf \accolade{k\in\N: \normii{\nabla f(\Xk)}\leq\epsilon}.\]
Then $\tauEpsilon$ is a stopping time for the stochastic process generated by Algorithm~\ref{algoStoScalTR}, and hence for $\accolade{\left(\Phik,\Dk,\Wtildek\right)}_{k\in\N}$ \cite{blanchet2016convergence,dzahini2020expected,paquette2018stochastic}. More precisely, the occurrence of $\accolade{\tauEpsilon=k}$ can be determined by observing $\left(\PhiZero,\Dzero,\WtildeZero\right)$, $\dots$ $(\Phikmun,\Dkmun,\Wtildekmun)$, which means that $\tauEpsilon$ is a stopping time with respect to the filtration $\accoladekinN{\falgebra}$. To bound $\E{\tauEpsilon}$ using Theorem~\ref{theorComplexity1}, we show next that Assumption~\ref{assumptionRenewal} holds for $\accolade{\left(\Phik,\Dk,\Wtildek\right)}_{k\in\N}$.

From~\eqref{PhikIncrementBound}, since $\falgebra\subset\fqalgebra$, we observe that\footnote{Unlike the analysis in~\cite{blanchet2016convergence}, the first equality of~\eqref{PhikIncrSecond} is essential and due to the fact that in the present framework $\tauEpsilon$ is a stopping time with respect to $\sigma$-algebras $\falgebra$ that are smaller than $\fqalgebra$ with respect to which $\accoladekinN{\Phik}$ was proved in Theorem~\ref{zerothOrderScal} Eq.~\eqref{PhikIncrementBound} to be a supermartingale.} for all $k\in\N$:
\begin{equation}\label{PhikIncrSecond}
\E{\Phikun-\Phik|\falgebra}=\E{\E{\Phikun-\Phik|\fqalgebra}|\falgebra}\leq\E{-\varrho\Dk^2|\falgebra}=-\varrho\Dk^2.
\end{equation}
Note that~\eqref{PhikIncrSecond} holds for any realizations of the random variables $\E{\Phikun-\Phik|\falgebra}$ and $\Dk^2$ and hence on the event $\accolade{\tauEpsilon >k}$ in particular, which shows that Assumption~\ref{assumptionRenewal}-$(iii)$ holds with the constant $\varrho=\frac{1}{2}\beta_f(1-\nu)(1-\frac{1}{\gamma^2})$ of Theorem~\ref{zerothOrderScal} and $h(t)=t^2$. Assumption~\ref{assumptionRenewal}-$(i)$ automatically follows from the initialization strategy in Algorithm~\ref{algoStoScalTR} with $\lambda=\ln(\gamma)$. Before showing by means of Lemma~\ref{smallLemma}, inspired by~\cite[Lemma~7]{blanchet2016convergence}, that~\eqref{dynamicsRenewal} is satisfied, we first define the constant 
\begin{equation}\label{deltaEpsilonDefinition}
\depsilon:=\frac{\epsilon}{\xi}, \quad\mbox{with } \xi\geq  \frac{1}{\alpha_Q}\max\accolade{\kappa_h+\kappa_{eg}, \eta_2+\kappa_{eg}, \frac{8\kappa_{ef}}{\kappa_{fcd}(1-\eta_1)}+\kappa_{eg}}=:\hat{\xi}, 
\end{equation}
inspired by the proof of Theorem~\ref{liminfTheorem}, especially~\eqref{ineq4}. Then, following ~\cite{blanchet2016convergence} exactly, one can assume without loss of generality that $\depsilon=\gamma^i\delta_0$ for some integer $i=:j_{\epsilon}\leq 0$, whence $\Dk=\gamma^{{\ubar{i}}_k}\depsilon$ for any $k$ and some integer~${\ubar{i}}_k$.

\begin{lemma}\label{smallLemma}
Let all the assumptions made in Theorem~\ref{liminfTheorem} hold. Then~\eqref{dynamicsRenewal} is satisfied for\\ $\Wtildek=2\left(\iak\iik\ijk-\frac{1}{2} \right)$, with $\lambda=\ln(\gamma)$ and some fixed $q\in (\tilde{\beta}, 1)$ with $\tilde{\beta}=\beta_f\beta_m(1-\beta_Q)$.
\end{lemma}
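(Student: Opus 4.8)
The plan is to verify the radius dynamics~\eqref{dynamicsRenewal} directly on the event $\{\tauEpsilon>k\}$, on which $\normii{\nabla f(\Xk)}>\epsilon$ by definition of $\tauEpsilon$, and then to match the concrete true/false indicator process to a biased random walk with the stated drift. Throughout I would exploit the discretization recorded just before the lemma, namely that $\Dk=\gamma^{\ubar{i}_k}\depsilon$ for an integer $\ubar{i}_k$, so that only the two regimes $\Dk\leq\depsilon$ (i.e.\ $\ubar{i}_k\leq 0$) and $\Dk\geq\gamma\depsilon$ (i.e.\ $\ubar{i}_k\geq 1$) can occur. This discreteness is exactly what rules out the awkward band $\depsilon<\Dk<\gamma\depsilon$ and lets the cap $\min\{\cdot,\depsilon\}$ do its job.

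The heart of the argument is a success-forcing step that reuses the chain~\eqref{ineq4}--\eqref{ineq5} from the proof of Theorem~\ref{liminfTheorem}. First I would show that if $\Dk\leq\depsilon$ and iteration $k$ is true (i.e.\ $\iak\iik\ijk=1$), then the iteration is successful. Indeed, $\normii{\nabla f(\Xk)}>\epsilon=\xi\depsilon\geq\xi\Dk\geq\hat{\xi}\Dk$ reproduces~\eqref{ineq4} by the choice of $\depsilon$ in~\eqref{deltaEpsilonDefinition}; combining this with the $\alpha_Q$-well alignment of Assumption~\ref{wellAlignedAssumption} (available since $\iak=1$) and the $(\kef,\keg;\Qk)$-full linearity (available since $\iik=1$) yields~\eqref{ineq5}, i.e.\ $\normii{\gkhat}\geq\max\{\kappa_h,\eta_2,\tfrac{8\kappa_{ef}}{\kappa_{fcd}(1-\eta_1)}\}\dk$, which is precisely condition~\eqref{deltaLemmaC} of Lemma~\ref{LemmaC}. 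Since $\ijk=1$ supplies $\ef$-accurate estimates with $\ef\leq\kappa_{ef}$, Lemma~\ref{LemmaC} forces a successful step, whence $\Dkun=\min\{\gamma\Dk,\dmaxx\}=\gamma\Dk=\Dk e^{\lambda}$ with $\lambda=\ln\gamma$ (the cap $\dmaxx$ being inactive since $\Dk\leq\depsilon\leq\delta_0$). Separately, on every iteration the failure branch gives at worst $\Dkun\geq\gamma^{-1}\Dk=\Dk e^{-\lambda}$.

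With these two facts I would assemble~\eqref{dynamicsRenewal} by coupling a clean walk $\Wtildekun\in\{-1,+1\}$ to the iteration so that each up-step forces a true iteration. In the regime $\Dk\leq\depsilon$: if $\Wtildekun=+1$ the iteration is true and hence successful, so $\Dkun=\Dk e^{\lambda}\geq\min\{\Dk e^{\lambda},\depsilon\}$; if $\Wtildekun=-1$ then $\min\{\Dk e^{-\lambda},\depsilon\}=\Dk e^{-\lambda}\leq\Dkun$. In the regime $\Dk\geq\gamma\depsilon$ one has $\Dkun\geq\gamma^{-1}\Dk\geq\depsilon\geq\min\{\Dk e^{\lambda\Wtildekun},\depsilon\}$ irrespective of the walk, so the cap absorbs any radius decrease. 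The only boundary value, $\Dk=\depsilon$, lies in the first regime and is covered because the true iteration there is still successful. This establishes~\eqref{dynamicsRenewal}.

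The main obstacle is the last, probabilistic, ingredient: manufacturing a genuine random walk obeying~\eqref{dynamicsWk} for a fixed $q$ out of the concrete increment $2(\iak\iik\ijk-\tfrac12)$, whose conditional up-probability is only bounded (by~\eqref{prodBetaIneq}, at least $\tilde{\beta}>1/2$) rather than equal to a constant. I would resolve this by a stochastic-domination coupling with a biased random walk of the stated drift $q$, arranged so that every up-step of the walk is contained in the true-iteration event; the deterministic inequalities above then hold verbatim with this $\Wtildekun$, while $\tilde{\beta}>1/2$ secures the positive drift $q>1/2$ required downstream in Theorem~\ref{theorComplexity1}. Care is also needed to keep the conditioning consistent, since the walk lives on $\accolade{\falgebra}_{k\in\N}$, the filtration with respect to which the true-iteration probability was lower bounded in~\eqref{prodBetaIneq}.
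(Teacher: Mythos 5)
Your proof is correct and follows essentially the same route as the paper's: the same dichotomy between $\Dk>\depsilon$ (where the discretization $\Dk=\gamma^{\ubar{i}_k}\depsilon$ gives $\Dk\geq\gamma\depsilon$, so the cap at $\depsilon$ absorbs any decrease) and $\Dk\leq\depsilon$ (where $\tauEpsilon>k$ reproduces~\eqref{ineq4}, a true iteration then yields~\eqref{ineq5} and hence success via Lemma~\ref{LemmaC}, while otherwise $\Dkun\geq\gamma^{-1}\Dk$). The only difference is that you are more careful with the final probabilistic step---the paper simply asserts that $\prob{\iak\iik\ijk=1}=q$ for some fixed $q\geq\tilde{\beta}$, whereas your stochastic-domination coupling makes explicit how a walk with constant up-probability on $\accolade{\falgebra}_{k\in\N}$ is extracted from an event whose conditional probability is only bounded below by $\tilde{\beta}$.
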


\begin{proof}
The result is proved by suitably adapting the proof of~\cite[Lemma~7]{blanchet2016convergence} as was done in~\cite{dzahini2020expected}. First, we notice that~\eqref{dynamicsRenewal} trivially holds when $\itksup=0$. Next, we show that when $\itksup=1$, then 
\[\Dkun\geq \min\accolade{\depsilon, \min\accolade{\dmaxx,\gamma\Dk}\iak\iik\ijk+\gamma^{-1}\Dk\left(1-\iak\iik\ijk\right)}. \]
Recall that $\dk=\gamma^{i_k}\depsilon$ for some integer $i_k$. Therefore, if $\dk>\depsilon$, then $\dk\geq \gamma\depsilon$, which implies that $\dkun\geq\gamma^{-1}\dk\geq\depsilon$. Now assume that $\dk\leq \depsilon$. Since $\tauEpsilon>k$, we have $\normii{\nabla f(\xk)}>\epsilon=\xi\depsilon\geq\xi\dk$. Thus, it follows from the definition of~$\xi$ that~\eqref{ineq4} holds. If $\iak=1$, $\iik=1$, and $\ijk=1$, then~\eqref{ineq5} holds, and consequently the $k$th iteration is successful, as was  explained in the proof of Theorem~\ref{liminfTheorem}. Hence, $\xkun=\xk+\Qk\sk$ and $\dkun=\min\accolade{\dmaxx,\gamma\dk}$. If $\iak\iik\ijk=0$, then it always holds that $\dkun\geq \gamma^{-1}\dk$. The proof is completed by  observing that $\prob{\iak\iik\ijk=1}=q$ for some fixed $q\geq \tilde{\beta}$.
\end{proof}
The main complexity result is provided by the next  theorem.

\begin{theorem}
Let all the assumptions made in Theorem~\ref{liminfTheorem} hold. Then 
\[\E{\tauEpsilon}\leq \frac{\tilde{\beta}}{2\tilde{\beta}-1}\cdot\frac{\PhiZero\tilde{\xi}^2}{\varrho \epsilon^2}+1\]
for some $\tilde{\xi}\geq\hat{\xi}$, where $\varrho$ is the constant of Theorem~\ref{zerothOrderScal}, $\tilde{\beta}$ is the same constant of Lemma~\ref{smallLemma}, and $\hat{\xi}$ is defined in~\eqref{deltaEpsilonDefinition}.
\end{theorem}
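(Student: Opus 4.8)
The plan is to reduce the statement to a direct application of the renewal--reward bound in Theorem~\ref{theorComplexity1}; the bulk of the work is verifying that the three parts of Assumption~\ref{assumptionRenewal} hold for the process $\accolade{(\Phik,\Dk,\Wtildek)}_{k\in\N}$ with the choices $h(t)=t^2$, $\lambda=\ln(\gamma)$, and $\Wtildek=2(\iak\iik\ijk-\tfrac12)$. Part~$(i)$ follows immediately from the initialization of Algorithm~\ref{algoStoScalTR}, which caps $\Dk$ at $\dmaxx=\gamma^{j_{\max}}\delta_0$. Part~$(iii)$ is exactly the content of~\eqref{PhikIncrSecond}: a tower-property conditioning of~\eqref{PhikIncrementBound} down to $\falgebra$ yields $\E{\Phikun-\Phik|\falgebra}\leq-\varrho\Dk^2$, valid in particular on $\accolade{\tauEpsilon>k}$, so $(iii)$ holds with $h(t)=t^2$ and the constant $\varrho=\tfrac12\beta_f(1-\nu)(1-\gamma^{-2})$ of Theorem~\ref{zerothOrderScal}. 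Part~$(ii)$ is furnished by Lemma~\ref{smallLemma}, which establishes the dynamics~\eqref{dynamicsRenewal} for $\Wtildek$ with $\lambda=\ln(\gamma)$ and up-probability $q\geq\tilde\beta=\beta_f\beta_m(1-\beta_Q)$.

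With Assumption~\ref{assumptionRenewal} in force, Theorem~\ref{theorComplexity1} gives $\E{\tauEpsilon}\leq\frac{q}{2q-1}\cdot\frac{\PhiZero}{\varrho\,h(\depsilon)}+1$, and since $h(\depsilon)=\depsilon^2$ this becomes $\E{\tauEpsilon}\leq\frac{q}{2q-1}\cdot\frac{\PhiZero}{\varrho\,\depsilon^2}+1$. Two reductions then produce the stated constants. First, the hypothesis $\beta_Q<1-\frac{1}{2\beta_f\beta_m}$ carried over from Theorem~\ref{liminfTheorem} is equivalent to $\tilde\beta>\tfrac12$, so $q\geq\tilde\beta>\tfrac12$ and the map $q\mapsto\frac{q}{2q-1}$ is well defined and strictly decreasing on $(\tfrac12,1)$; since $q\geq\tilde\beta$, monotonicity gives $\frac{q}{2q-1}\leq\frac{\tilde\beta}{2\tilde\beta-1}$, producing the stated factor. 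Second, recalling that $\depsilon$ must be taken of the discretized form $\gamma^{j_\epsilon}\delta_0$ with $\depsilon\leq\epsilon/\hat\xi$ (see~\eqref{deltaEpsilonDefinition}), I would write $\depsilon=\epsilon/\tilde\xi$ with $\tilde\xi:=\epsilon/\depsilon\geq\hat\xi$, whence $\depsilon^2=\epsilon^2/\tilde\xi^2$ and the bound collapses to $\E{\tauEpsilon}\leq\frac{\tilde\beta}{2\tilde\beta-1}\cdot\frac{\PhiZero\tilde\xi^2}{\varrho\,\epsilon^2}+1$.

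The main obstacle is the same delicate point as in~\cite{blanchet2016convergence}: the renewal model~\eqref{dynamicsWk} demands a biased walk with an \emph{exact} up-probability $q$, whereas the algorithm only supplies the \emph{conditional, one-sided} estimate $\prob{\iak\iik\ijk=1|\falgebra}\geq\tilde\beta$ obtained in~\eqref{prodBetaIneq}. Reconciling these requires the stochastic-domination argument underlying Lemma~\ref{smallLemma} and Theorem~\ref{theorComplexity1}, in which the true tentative-radius process is shown to dominate a $\tilde\beta$-biased random walk; it is precisely the monotonicity of $q\mapsto q/(2q-1)$ that lets the worst-case drift $q=\tilde\beta$ be substituted to obtain a valid upper bound. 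I would also take care to verify that the stopping time $\tauEpsilon$ is adapted to the coarser filtration $\accolade{\falgebra}_{k\in\N}$---as stressed after~\eqref{PhikIncrSecond}---so that the conditioning in $(iii)$ is legitimate, since this is the one structural departure from the full-space STORM analysis.
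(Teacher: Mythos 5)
Your proposal is correct and follows essentially the same route as the paper: verify the three parts of Assumption~\ref{assumptionRenewal} via the initialization, \eqref{PhikIncrSecond}, and Lemma~\ref{smallLemma}, apply Theorem~\ref{theorComplexity1} with $h(t)=t^2$ and $\depsilon=\epsilon/\tilde{\xi}$, and then use $q\geq\tilde{\beta}>\tfrac12$ with the monotonicity of $q\mapsto q/(2q-1)$ to obtain the stated constant. Your added remarks on the filtration for $\tauEpsilon$ and the stochastic-domination issue correctly identify the points the paper handles in the surrounding text of Section~\ref{ExpectedSubsection} rather than in the theorem's proof itself.
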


\begin{proof}
Recall the choice of $q$ in Lemma~\ref{smallLemma}, and pick some $\tilde{\xi}\geq\hat{\xi}$ such that $\depsilon=\frac{\epsilon}{\tilde{\xi}}=\gamma^{i}\delta_0$ for some integer $i\leq 0$ (without loss of generality), as discussed above. The proof follows by employing Theorem~\ref{theorComplexity1} with $h(t)=t^2$, which yields
\[\E{\tauEpsilon}\leq \frac{q}{2q-1}\cdot\frac{\PhiZero\tilde{\xi}^2}{\varrho \epsilon^2}+1\leq \frac{\tilde{\beta}}{2\tilde{\beta}-1}\cdot\frac{\PhiZero\tilde{\xi}^2}{\varrho \epsilon^2}+1.\]
\end{proof}

\section{Numerical results.}\label{Section6}
We now illustrate the performance of STARS for various random subspace and full space (i.e., STORM-like) forms. We consider stochastically noisy variants of $40$ deterministic unconstrained problems ranging in dimension from $n=98$ to $n=125$; see Table~\ref{tab:probs}. All objective functions are sums of squares, that is, $f(\x)=\sum_{i=1}^m f_i(\x)^2$,
and are corrupted with either {\it additive} or {\it multiplicative} stochastic noise. In the former case, the noisy $f_{\ubar{\theta}}$ is given by $f_{\ubar{\theta}}(\x)=f(\x)+{\ubar{\theta}}$;  in the latter, $f_{\ubar{\theta}}(\x)=f(\x)(1+{\ubar{\theta}})$. In both cases, the centered random variable ${\ubar{\theta}}$ with standard deviation~$\sigma=10^{-3}$ is either normally or uniformly distributed. 
In order to take into account the variability due to the stochastic noise or random subspaces, $20$ replications (corresponding to random seeds common across the tests) were performed for each of the~$40$ problems; combined with the two distributions of ${\ubar{\theta}}$, we thus have a total of 1,600 {\it problem instances}. 

For all tested variants, we 
use $p$-dimensional linear interpolation models $\mhatk$, where $\nabla \mhatk = \gkhat$ is obtained from the forward finite-difference approximation from Corollary~\ref{CorQkFully}. We employ the forward finite-difference parameter $h=\min\accolade{\hopt,\dk}$, with $\hopt$ obtained following \cite{more2011edn} and provided in Table~\ref{tab:probs} and $\dk$ denoting the current trust-region radius. 
We employ such linear interpolation models so as to focus on differences due to the size and form of $\QkTranspose$ and since the corresponding trust-region subproblems can be solved exactly:
$\sk^{\star}=-\dk\frac{\gkhat}{\normii{\gkhat}}$ if $\gkhat\neq \bm{0}$ and $\sk^{\star}=\bm{0}$ otherwise.
When computing estimates of unknown function values at each iteration by means of a Monte Carlo approach using $n_k=25$ noisy function evaluations for all~$k$, available samples from previous iterations are reused, following the strategy described in the last paragraph of~\cite[Section~2.3]{audet2019stomads}, which was also used in~\cite[Section~5.2]{dzahini2020constrained}. For $\QkTranspose$, motivated by Theorems~\ref{GaussianJLTmatrix} and~\ref{sHashingTheorem}, we tested both Gaussian and $(r=1)$-Hashing strategies.
We found these to perform comparably for the tested settings and hence report
the results for the Gaussian case (labeled {\sf G-STARS-}$p$) whereby the entries of $\QkRandomTranspose\in \rpn$ are distributed as $\mathcal{N}(0,1/p)$.
We also test a STORM-like instance of STARS (labeled {\sf I-STARS-}$n$) where $\Qk=\bm{I_n}\in\rnn$. All algorithmic variants used the parameters $\gamma=2$, $\eta_2= 90\eta_1=0.9$, $\dmaxx=5$, $c_1=1$ and~$\delta_0=1$.  

All STARS variants are assessed by using data 
profiles~\cite{MoWi2009}. For each of the 1,600 noisy problem instances, let~$\bm{x_N}$ be the point with the best~$f$ function value obtained by an algorithm after $N$ evaluations of~$f_{\ubar{\theta}}$, denote by~$f^{\star}$ the least such value found by all the algorithms, and let $\bm{x_0}$ be the starting point. A problem is considered successfully solved within a convergence tolerance~$\tau\in [0,1]$ after $N$ evaluations if
\begin{equation*}
	f(\bm{x_N})\leq f^{\star} + \tau(f(\bm{x_0}) - f^{\star}).
\end{equation*}
The horizontal and vertical axes of the data profiles show, respectively, the number of noisy function evaluations divided by~$(n+1)$ and the proportion of problems solved. During the experiments a budget of 1,500$(n+1)$ noisy function evaluations is allocated to all the algorithms. For multiplicative noise, Figure~\ref{fig:dataprofsig3M} (left) shows that when the overall noise (i.e., $f(\x){\ubar{\theta}}$) is large (which is the case for many of these problems: for nearly half of these problem instances $f(\xb_0)\ge 10^3$), the {\sf G-STARS} variants with small values of $p$ perform well relative to the full space {\sf I-STARS-}$n$ variant. On the other hand, for this sampling budget and additive noise level, Figure~\ref{fig:dataprofsig3M} (right) shows that {\sf I-STARS-}$n$ is competitive with {\sf G-STARS} with $p=2, 5$. Unsurprisingly, in both cases, {\sf G-STARS-}$n$ (which uses $p=\min\accolade{n,100}$) is outperformed by the {\sf I-STARS-}$n$, which is using a better conditioned, deterministic matrix throughout.
\begin{figure}
	\centering
	\includegraphics[width=0.49\linewidth]{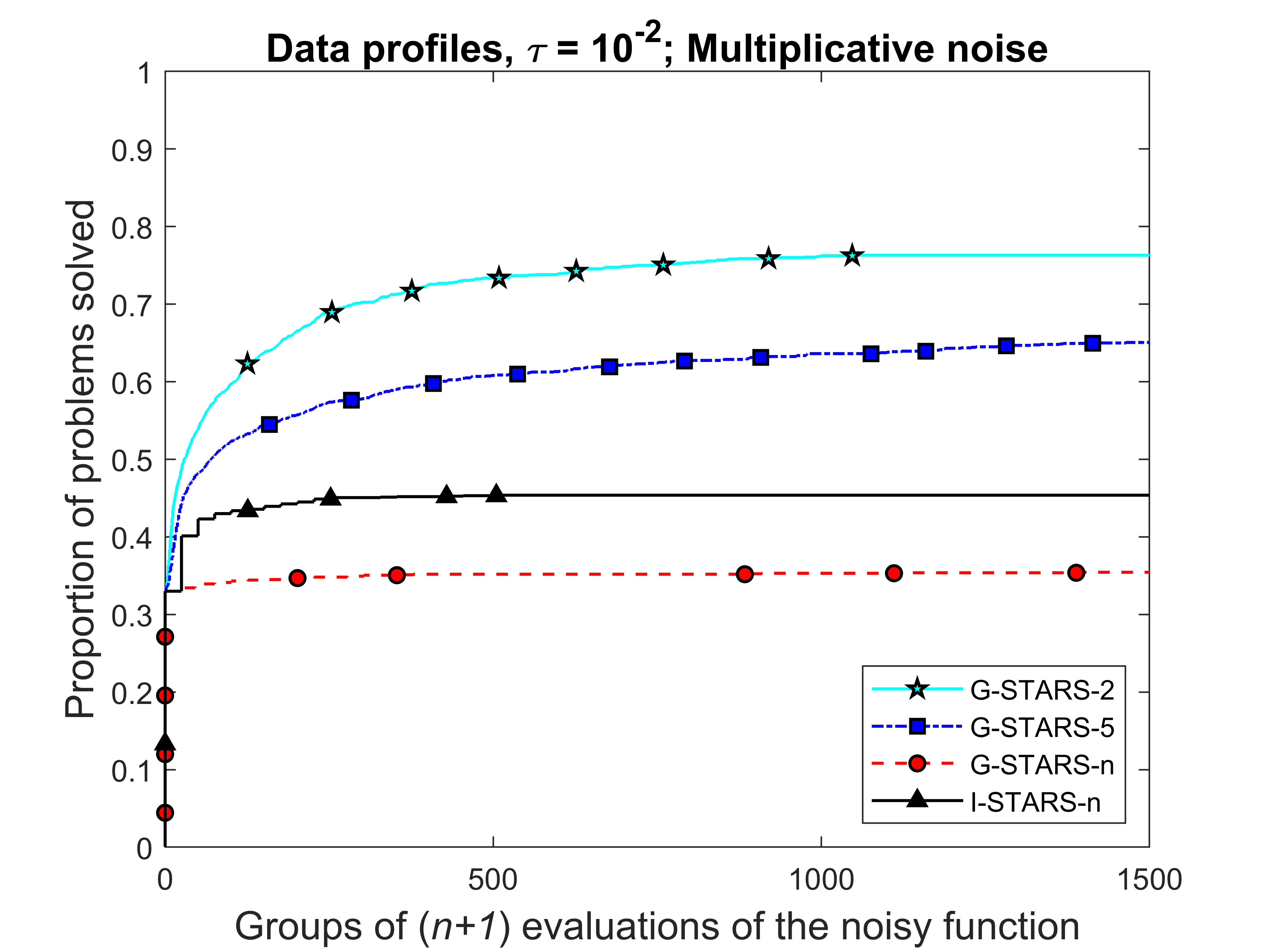} \hfill 
	\includegraphics[width=0.49\linewidth]{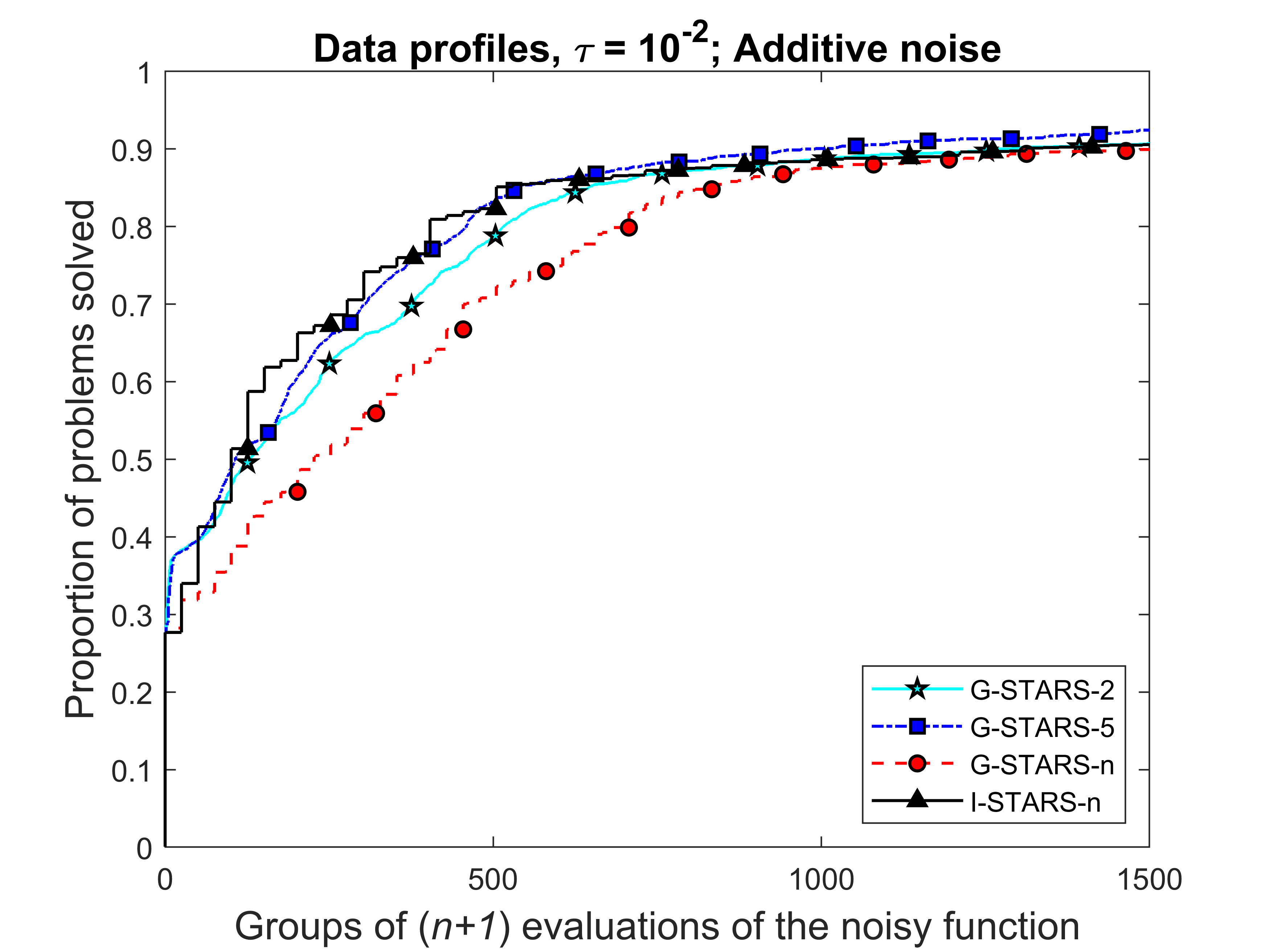}
	\caption[]{\small{Data profiles for convergence tolerance $\tau=10^{-2}$ on 1,600  problem instances for multiplicative noise (left) and additive noise (right) with standard deviation $\sigma = 10^{-3}$.}}
	\label{fig:dataprofsig3M}
\end{figure}

\section*{Conclusion.} 
This work introduces STARS, the first DFO algorithm developed for stochastic objective functions that achieves scalability using random models constructed in low-dimensional random subspaces. The analysis of STARS extends an existing framework of model-based stochastic DFO (where randomness comes only from the stochasticity of the objective function), to settings where additional randomness stems from the mechanics of the algorithm. Making use of a supermartingale-based framework, we prove that the expected complexity of STARS, which uses subspace models, is similar to that of stochastic DFO algorithms in a smooth nonconvex setting. 
Numerical experiments demonstrate the performance of STARS on large-scale problems using linear interpolation models in subspaces of various dimensions.

We note that for ease of exposition we have focused in
\eqref{probl1} on $f_{\ubar{\theta}}$ being an unbiased estimator of $f$, but this can be relaxed in all ways that the STORM framework can address. Similarly, for concreteness, the stated algorithm and numerical results have focused respectively on linear and quadratic models, but the analysis readily applies to more general probabilistically fully linear models in subspaces.
Apart from the use of particular random interpolation models, the present work also employs a classical Monte Carlo sampling strategy for computing estimates. Future works can improve this sampling strategy while also using more general random models.

\bibliographystyle{siamplain}
\bibliography{references}

\begin{thebibliography}{10}

\bibitem{anagnostidis2021direct}
{\sc S.~K. Anagnostidis, A.~Lucchi, and Y.~Diouane}, {\em Direct-search for a
  class of stochastic min-max problems}, in International Conference on
  Artificial Intelligence and Statistics, 2021, pp.~3772--3780.

\bibitem{audet2019stomads}
{\sc C.~Audet, K.~J. Dzahini, M.~Kokkolaras, and S.~{Le~Digabel}}, {\em
  Stochastic mesh adaptive direct search for blackbox optimization using
  probabilistic estimates}, Computational Optimization and Applications, 79
  (2021), pp.~1--34, \url{https://doi.org/10.1007/s10589-020-00249-0}.

\bibitem{AuHa2017}
{\sc C.~Audet and W.~Hare}, {\em Derivative-Free and Blackbox Optimization},
  Springer Series in Operations Research and Financial Engineering, Springer,
  Cham, Switzerland, 2017, \url{https://doi.org/10.1007/978-3-319-68913-5}.

\bibitem{bandeira2016sharp}
{\sc A.~S. Bandeira and R.~{Van~Handel}}, {\em Sharp nonasymptotic bounds on
  the norm of random matrices with independent entries}, The Annals of
  Probability, 44 (2016), pp.~2479--2506,
  \url{https://doi.org/10.1214/15-AOP1025}.

\bibitem{berahas2019global}
{\sc A.~S. Berahas, L.~Cao, and K.~Scheinberg}, {\em Global convergence rate
  analysis of a generic line search algorithm with noise}, SIAM Journal on
  Optimization, 31 (2021), pp.~1489--1518,
  \url{https://doi.org/10.1137/19M1291832}.

\bibitem{BerDioKunRoy2022LvnbrgMrqdt}
{\sc E.~Bergou, Y.~Diouane, V.~Kungurtsev, and C.~W. Royer}, {\em A stochastic
  {Levenberg--Marquardt} method using random models with complexity results},
  SIAM/ASA Journal on Uncertainty Quantification, 10 (2022), pp.~507--536,
  \url{https://doi.org/10.1137/20M1366253}.

\bibitem{bhattacharya2007basic}
{\sc R.~N. Bhattacharya and E.~C. Waymire}, {\em A Basic Course in Probability
  Theory}, vol.~69, Springer, 2007,
  \url{https://doi.org/10.1007/978-3-319-47974-3}.

\bibitem{billingsley1995ThirdEdition}
{\sc P.~Billingsley}, {\em Probability and Measure}, Wiley, New York, USA,
  third~ed., 1995.

\bibitem{blanchet2016convergence}
{\sc J.~Blanchet, C.~Cartis, M.~Menickelly, and K.~Scheinberg}, {\em
  Convergence rate analysis of a stochastic trust region method via
  submartingales}, INFORMS Journal on Optimization, 1 (2019), pp.~92--119,
  \url{https://doi.org/10.1287/ijoo.2019.0016}.

\bibitem{RBSWlong21}
{\sc R.~Bollapragada and S.~M. Wild}, {\em Adaptive sampling quasi-{N}ewton
  methods for zeroth-order stochastic optimization}, Tech. Report 2109.12213,
  ArXiv, 2021, \url{https://arxiv.org/abs/2109.12213}.

\bibitem{CRsubspace2021}
{\sc C.~Cartis and L.~Roberts}, {\em Scalable subspace methods for
  derivative-free nonlinear least-squares optimization}, Preprint 2102.12016,
  arXiv, 2021, \url{https://arxiv.org/abs/2102.12016}.

\bibitem{Ch2012}
{\sc K.~H. Chang}, {\em Stochastic {Nelder--Mead} simplex method - a new
  globally convergent direct search method for simulation optimization},
  European Journal of Operational Research, 220 (2012), pp.~684--694,
  \url{https://doi.org/10.1016/j.ejor.2012.02.028}.

\bibitem{chen2018stochastic}
{\sc R.~Chen, M.~Menickelly, and K.~Scheinberg}, {\em Stochastic optimization
  using a trust-region method and random models}, Mathematical Programming, 169
  (2018), pp.~447--487, \url{https://doi.org/10.1007/s10107-017-1141-8}.

\bibitem{CoScVibook}
{\sc A.~R. Conn, K.~Scheinberg, and L.~N. Vicente}, {\em Introduction to
  Derivative-Free Optimization}, SIAM, Philadelphia, 2009,
  \url{https://doi.org/10.1137/1.9780898718768}.

\bibitem{curtis2020adaptive}
{\sc F.~E. Curtis and K.~Scheinberg}, {\em Adaptive stochastic optimization: A
  framework for analyzing stochastic optimization algorithms}, IEEE Signal
  Processing Magazine, 37 (2020), pp.~32--42,
  \url{https://doi.org/10.1109/MSP.2020.3003539}.

\bibitem{dzahini2020expected}
{\sc K.~J. Dzahini}, {\em Expected complexity analysis of stochastic
  direct-search}, Computational Optimization and Applications, 81 (2022),
  pp.~179--200, \url{https://doi.org/10.1007/s10589-021-00329-9}.

\bibitem{dzahini2020constrained}
{\sc K.~J. Dzahini, M.~Kokkolaras, and S.~{Le~Digabel}}, {\em Constrained
  stochastic blackbox optimization using a progressive barrier and
  probabilistic estimates}, Mathematical Programming,  (2022),
  \url{https://doi.org/10.1007/s10107-022-01787-7}.

\bibitem{GrossParks2021}
{\sc J.~C. Gross and G.~T. Parks}, {\em Optimization by moving ridge functions:
  derivative-free optimization for computationally intensive functions},
  Engineering Optimization, 54 (2021), pp.~553--575,
  \url{https://doi.org/10.1080/0305215X.2021.1886286}.

\bibitem{miaoSchNeur2021}
{\sc B.~Jin, K.~Scheinberg, and M.~Xie}, {\em High probability complexity
  bounds for line search based on stochastic oracles}, in Advances in Neural
  Information Processing Systems, vol.~34, 2021, pp.~9193--9203.

\bibitem{JohnsonLind1984}
{\sc W.~B. Johnson and J.~Lindenstrauss}, {\em Extensions of {Lipschitz}
  mappings into a {Hilbert} space}, Contemporary mathematics, 26 (1984),
  pp.~189--206.

\bibitem{KaNel2014SparseLidenstrauss}
{\sc D.~M. Kane and J.~Nelson}, {\em Sparser {Johnson--Lindenstrauss}
  transforms}, Journal of the ACM, 61 (2014), pp.~1--23,
  \url{https://doi.org/10.1145/2559902}.

\bibitem{knuth1976big}
{\sc D.~E. Knuth}, {\em Big omicron and big omega and big theta}, ACM Sigact
  News, 8 (1976), pp.~18--24, \url{https://doi.org/10.1145/1008328.1008329}.

\bibitem{LMW2019AN}
{\sc J.~Larson, M.~Menickelly, and S.~M. Wild}, {\em Derivative-free
  optimization methods}, Acta Numerica, 28 (2019), pp.~287--404,
  \url{https://doi.org/10.1017/s0962492919000060}.

\bibitem{LauBeaMaP2000}
{\sc B.~Laurent and P.~Massart}, {\em Adaptive estimation of a quadratic
  functional by model selection}, Annals of Statistics, 28 (2000),
  pp.~1302--1338, \url{https://www.jstor.org/stable/2674095}.

\bibitem{MoWi2009}
{\sc J.~J. Mor\'e and S.~M. Wild}, {\em Benchmarking derivative-free
  optimization algorithms}, SIAM Journal on Optimization, 20 (2009),
  pp.~172--191, \url{https://doi.org/10.1137/080724083}.

\bibitem{more2011edn}
{\sc J.~J. Mor{\'e} and S.~M. Wild}, {\em Estimating derivatives of noisy
  simulations}, ACM Transactions on Mathematical Software, 38 (2012),
  pp.~19:1--19:21, \url{https://doi.org/10.1145/2168773.2168777}.

\bibitem{NeFeSchLei2011}
{\sc A.~Neumaier, H.~Fendl, H.~Schilly, and T.~Leitner}, {\em {VXQR}:
  derivative-free unconstrained optimization based on {QR} factorizations},
  Soft Computing, 15 (2011), pp.~2287--2298,
  \url{https://doi.org/10.1007/s00500-010-0652-5}.

\bibitem{paquette2018stochastic}
{\sc C.~Paquette and K.~Scheinberg}, {\em A stochastic line search method with
  expected complexity analysis}, SIAM Journal on Optimization, 30 (2020),
  pp.~349--376, \url{https://doi.org/10.1137/18M1216250}.

\bibitem{rinaldi2022weak}
{\sc F.~Rinaldi, L.~N. Vicente, and D.~Zeffiro}, {\em A weak tail-bound
  probabilistic condition for function estimation in stochastic derivative-free
  optimization}, arXiv,  (2022), \url{https://arxiv.org/abs/2202.11074}.

\bibitem{RoMo1951}
{\sc H.~Robbins and S.~Monro}, {\em A stochastic approximation method}, The
  Annals of Mathematical Statistics, 22 (1951), pp.~400--407,
  \url{http://www.jstor.org/stable/2236626}.

\bibitem{RobRoy2022RedSpace}
{\sc L.~Roberts and C.~W. Royer}, {\em Direct search based on probabilistic
  descent in reduced spaces}, arXiv,  (2022),
  \url{https://arxiv.org/abs/2204.01275}.

\bibitem{shao2022Thesis}
{\sc Z.~Shao}, {\em On Random Embeddings and their Applications to
  Optimization}, PhD thesis, University of Oxford, 2022.

\bibitem{shashaani2018astro}
{\sc S.~Shashaani, F.~S. Hashemi, and R.~Pasupathy}, {\em {ASTRO-DF: A} class
  of adaptive sampling trust-region algorithms for derivative-free stochastic
  optimization}, SIAM Journal on Optimization, 28 (2018), pp.~3145--3176,
  \url{https://doi.org/10.1137/15M1042425}.

\bibitem{tao2012topicsInRMT}
{\sc T.~Tao}, {\em Topics in Random Matrix Theory}, AMS, 2012,
  \url{https://doi.org/10.1090/gsm/132}.

\bibitem{WoodrDavP2014}
{\sc D.~P. Woodruff}, {\em Sketching as a tool for numerical linear algebra},
  Foundations and Trends in Theoretical Computer Science, 10 (2014),
  pp.~1--157, \url{https://doi.org/10.1561/0400000060}.

\end{thebibliography}

\section*{Appendix.}
This appendix presents the proofs of a series of results in the main body of the manuscript. 
\subsubsection*{Proof of Theorem~\ref{sHashingTheorem}.}
\begin{proof}
Since a detailed proof is provided in~\cite{KaNel2014SparseLidenstrauss}, only its main idea is presented here. We note as mentioned in~\cite[Section~1.1]{KaNel2014SparseLidenstrauss} that one can  assume without any loss of generality that $\normii{\vi}=1$, in which case the result follows by showing that $\normii{\SMatrix\vi}^2\in\left[(1-\varepsilon)^2, (1+\varepsilon)^2\right]$, which is implied by $\abs{\normii{\SMatrix\vi}^2-1}\leq 2\varepsilon-\varepsilon^2$. Thus, it suffices to show that for any unit norm $\vi$, 
$\prob{\abs{\normii{\SMatrix\vi}^2-1}>2\varepsilon-\varepsilon^2}<\beta,$ 
which is proved in~\cite[Theorem~4.3]{KaNel2014SparseLidenstrauss} for the above choices of $\ell, r$, and $p$.
\end{proof}
\subsubsection*{Proof of Lemma~\ref{FundTheorCalc}.}
\begin{proof}
First, we note that 
\begin{eqnarray*}
\left\lvert\hat{f}(\s+\di)-\hat{f}(\s)-\di^{\top}\nabla \hat{f}(\s)\right\rvert&=&\left\lvert f(\xk+\Qk(\s+\di)) - f(\xk+\Qk\s)- \di^{\top}\Qk^{\top}\nabla f(\xk+\Qk\s)\right\rvert\\
&=&\abs{f(\bm{{y}}+\dik)-f(\bm{{y}})-\dik^{\top}\nabla f(\bm{{y}})},
\end{eqnarray*}
where $\bm{{y}}=\xk+\Qk\s\in\rn$ and $\dik=\Qk\di\in\rn$. It follows from the Fundamental Theorem of Calculus in $\rn$ (see, e.g.,~\cite[Lemma~9.3]{AuHa2017}) that for all $\bm{{y}}, \dik\in\rn$,
$f(\bm{{y}}+\dik)-f(\bm{{y}})=\int_{0}^{1}\dik^{\top}\nabla f(\bm{{y}}+\tau \dik)d\tau.$ Thus, 
\begin{eqnarray*}
\abs{\hat{f}(\s+\di)-\hat{f}(\s)-\di^{\top}\nabla \hat{f}(\s)}&=& \abs{\int_{0}^{1}\dik^{\top}\left[\nabla f(\bm{{y}}+\tau \dik) - \nabla f(\bm{{y}})\right]d\tau}\leq\\
\int_{0}^{1}\normii{\dik}\normii{\nabla f(\bm{{y}}+\tau \dik) - \nabla f(\bm{{y}})}d\tau
&\leq& \Lg\normii{\dik}^2\int_{0}^{1}\tau d\tau\leq \frac{1}{2}\Lg\Qmax^2\normii{\di}^2.
\end{eqnarray*}
\end{proof}
\subsubsection*{Proof of Lemma~\ref{LemmaA}.}
\begin{proof}
The proof is similar to that of~\cite[Lemma~4.5]{chen2018stochastic} and is not detailed here. It follows from~\eqref{CauchyDecrease} together with the inequalities $\kappa_h\geq \max\accolade{\norme{\Hkhat}, 1}$ and $\normii{\gkhat}\geq\kappa_h\dk$ that  $\mhatk(\bm{0})-\mhatk(\sk)\geq\frac{\kappa_{fcd}}{2}\normii{\gkhat}\dk.$ Since the model is $(\kef,\keg;\Qk)$-fully linear, then 
$f(\xk+\Qk\sk)-f(\xk)\leq 2\kappa_{ef}\dk^2+\mhatk(\sk)-\mhatk(\bm{0})\leq -\frac{\kappa_{fcd}}{4}\normii{\gkhat}\dk,$
where the last inequality follows from the fact that $\dk\leq \frac{\kappa_{fcd}}{8\kappa_{ef}}\normii{\gkhat}$.
\end{proof}
\subsubsection*{Proof of Lemma~\ref{LemmaB2}.}
\begin{proof}
The proof is inspired by (and similar to) that of~\cite[Lemma~4.6]{chen2018stochastic}. By $(\kef,\keg;\Qk)$-full linearity and~\eqref{deltaLemmaB2}, it holds
\begin{equation}\label{ineq1}
\normii{\gkhat}\geq \normii{\QkTranspose\nabla f(\xk)}-\kappa_{eg}\dk\geq \max\accolade{\kappa_h, \frac{8\kappa_{ef}}{\kappa_{fcd}}}\dk, 
\end{equation}
which shows that condition~\eqref{LemmaAimprovement} is satisfied. Consequently $f$ decreases as in~\eqref{LemmaAimprovement}. The first inequality in~\eqref{ineq1}, together with~\eqref{deltaLemmaB2}, yields
$\normii{\gkhat}\geq \frac{4C_1}{\kappa_{fcd}}\normii{\QkTranspose\nabla f(\xk)},$
which, combined with~\eqref{LemmaAimprovement}, implies~\eqref{LemmaB2improvement}, and the proof is complete.
\end{proof}
\subsubsection*{Proof of Lemma~\ref{LemmaE}.}
\begin{proof}
According to the Fundamental Theorem of Calculus in $\rn$ (see, e.g.,~\cite[Lemma~9.3]{AuHa2017}),\\
$\abs{f(\xk)-f(\xk+\Qk\sk)-\nabla f(\xk+\Qk\sk)^{\top}(-\Qk\sk)}\leq\frac{1}{2}L_g\Qmax^2\dk^2,$  with the inequality also following from $\normii{\Qk\sk}\leq \Qmax\dk$, which implies that
\begin{equation}\label{ineq2}
f(\xk+\Qk\sk)-f(\xk)\leq \sk^{\top}\left[\QkTranspose \nabla f(\xk+\Qk\sk)\right] + \frac{1}{2}L_g\Qmax^2\dk^2. 
\end{equation}
Per Lipschitz continuity of $\nabla f$, 
$\normii{\QkTranspose \nabla f(\xk+\Qk\sk)-\QkTranspose \nabla f(\xk)} \leq L_g\Qmax^2\dk,$
which implies that
\begin{equation}\label{ineq3}
\normii{\QkTranspose \nabla f(\xk+\Qk\sk)}\leq L_g\Qmax^2\dk+\normii{\QkTranspose \nabla f(\xk)}.
\end{equation}
Thus, using~\eqref{ineq2} and~\eqref{ineq3}, together with the inequality $\dk\leq \zeta^{-1}\normii{\QkTranspose \nabla f(\xk)}$, yields
\begin{eqnarray*}
f(\xk+\Qk\sk)&-&f(\xk)\leq \normii{\QkTranspose \nabla f(\xk+\Qk\sk)}\dk+\frac{L_g}{2\zeta}\Qmax^2\normii{\QkTranspose \nabla f(\xk)}\dk\\
&\leq& \left(\frac{L_g}{\zeta}\Qmax^2\normii{\QkTranspose \nabla f(\xk)} +\normii{\QkTranspose \nabla f(\xk)}\right)\dk\\
&+&\frac{L_g}{2\zeta}\Qmax^2\normii{\QkTranspose \nabla f(\xk)}\dk= \left(1+\frac{3L_g}{2\zeta}\Qmax^2\right)\normii{\QkTranspose \nabla f(\xk)}\dk,
\end{eqnarray*}
which completes the proof.
\end{proof}
\begin{table}[tb]
\centering
	
\caption{The 40 problems considered. \label{tab:probs}}
\begin{adjustbox}{width=12.5cm,center}
\begin{tabular}{@{}lllll|lllll@{}}
No. & Problem  & $n$   & $m$   & $\hopt$     & No. & Problem  & $n$   & $m$   & $\hopt$     \\ \hline
1  & ARGLALE  & $100$ & $200$ & $1e$-$03$ & 21 & EIGENC   & $110$ & $110$ & $1e$-$04$ \\
2  & ARGLBLE  & $100$ & $200$ & $5e$-$04$ & 22 & EXTROSNB & $100$ & $100$ & $1e$-$04$ \\
3  & ARGLCLE  & $100$ & $200$ & $5e$-$04$ & 23 & FREUROTH & $100$ & $198$ & $2e$-$04$ \\
4  & ARTIF    & $100$ & $100$ & $4e$-$05$ & 24 & INTEGREQ & $100$ & $100$ & $1e$-$05$ \\
5  & ARWHDNE  & $100$ & $198$ & $1e$-$04$ & 25 & MANCINO  & $100$ & $100$ & $2e$-$03$ \\
6  & BDQRTIC  & $100$ & $192$ & $9e$-$05$ & 26 & MOREBV   & $100$ & $100$ & $1e$-$07$ \\
7  & BDVALUES & $100$ & $100$ & $1e$-$02$ & 27 & MSQRTA   & $100$ & $100$ & $2e$-$04$ \\
8  & BRATU2D  & $100$ & $100$ & $4e$-$06$ & 28 & MSQRTB   & $100$ & $100$ & $2e$-$04$ \\
9  & BRATU2DT & $100$ & $100$ & $5e$-$06$ & 29 & OSCIGRNE & $100$ & $100$ & $9e$-$05$ \\
10 & BRATU3D  & $125$ & $125$ & $1e$-$05$ & 30 & Penalty2 & $100$ & $200$ & $5e$-$05$ \\
11 & BROWNALE & $100$ & $100$ & $4e$-$04$ & 31 & PENLT1NE & $100$ & $101$ & $8e$-$03$ \\
12 & BROYDN3D & $100$ & $100$ & $4e$-$05$ & 32 & POWELLSE & $100$ & $100$ & $3e$-$04$ \\
13 & BROYDNBD & $100$ & $100$ & $4e$-$05$ & 33 & POWELLSG & $100$ & $100$ & $2e$-$04$ \\
14 & CBRATU2D & $98$  & $98$  & $4e$-$06$ & 34 & ROSENBR  & $100$ & $198$ & $1e$-$04$ \\
15 & CHANDHEQ & $100$ & $100$ & $6e$-$05$ & 35 & SPMSQRT  & $100$ & $164$ & $3e$-$04$ \\
16 & CHEBYQAD & $100$ & $100$ & $4e$-$06$ & 36 & SROSENBR & $100$ & $100$ & $4e$-$05$ \\
17 & ConnBand & $100$ & $100$ & $5e$-$05$ & 37 & VARDIMNE & $100$ & $102$ & $1e$-$04$ \\
18 & CUBE     & $100$ & $100$ & $4e$-$05$ & 38 & VarTrig  & $100$ & $100$ & $3e$-$03$ \\
19 & EIGENA   & $110$ & $110$ & $2e$-$04$ & 39 & YATP1SQ  & $99$  & $99$  & $9e$-$04$ \\
20 & EIGENB   & $110$ & $110$ & $6e$-$05$ & 40 & YATP2SQ  & $99$  & $99$  & $9e$-$04$
\end{tabular}
\end{adjustbox}
\end{table}

\clearpage

\framebox{\parbox{.90\linewidth}{\scriptsize The submitted manuscript has been created by
        UChicago Argonne, LLC, Operator of Argonne National Laboratory (``Argonne'').
        Argonne, a U.S.\ Department of Energy Office of Science laboratory, is operated
        under Contract No.\ DE-AC02-06CH11357.  The U.S.\ Government retains for itself,
        and others acting on its behalf, a paid-up nonexclusive, irrevocable worldwide
        license in said article to reproduce, prepare derivative works, distribute
        copies to the public, and perform publicly and display publicly, by or on
        behalf of the Government.  The Department of Energy will provide public access
        to these results of federally sponsored research in accordance with the DOE
        Public Access Plan \url{http://energy.gov/downloads/doe-public-access-plan}.}}

\end{document}